\newcommand{\grouppair}[2]{\left<#1,#2\right>}
\newcommand{\smallfrac}[2]{\mbox{\small $\frac{#1}{#2}$}}
\newcommand{\eS}{\mathcal{S}}
\newcommand{\Z}{\mathbb{Z}}
\newcommand{\F}{\mathbb{F}}
\newcommand{\R}{\mathbb{R}}
\newcommand{\E}{\mathbb{E}}
\newcommand{\eL}{\mathcal{L}}
\newcommand{\eLcat}{\mathcal{L}^{\textrm{CAT(0)}}}
\newcommand{\incl}{\hookrightarrow}
\newcommand{\Vtrans}{\overrightarrow{\sf t}}
\newcommand{\Hdisp}{{\sf D}}
\newcommand{\slopevec}{\overrightarrow{{\sf m}}}
\newcommand{\slopescal}{{\sf M}}
\newcommand{\unitvec}{\overrightarrow{{\sf u}}}
\newcommand{\nutoo}{{\sf N}}
\DeclareMathOperator{\Min}{Min}
\DeclareMathOperator{\id}{id}
\DeclareMathOperator{\limset}{limset}
\theoremstyle{plain}
\newtheorem{prop}{Proposition}[section]
\newtheorem{lemma}[prop]{Lemma}
\newtheorem{corollary}[prop]{Corollary}
\newtheorem{Thm}{Theorem}
\newtheorem*{Theorem}{Theorem}
\newtheorem{Co}[Thm]{Corollary}
\theoremstyle{definition}
\newtheorem{example}[prop]{Example}
\newtheorem{rem}[prop]{Remark}
\begin{document}

\title[Cell-Like Equivalences and Boundaries of CAT(0) Groups]{Cell-Like Equivalences and Boundaries of CAT(0) Groups}

\begin{abstract}
In 2000, Croke and Kleiner showed that a CAT(0) group $G$ can admit more
than one boundary. This contrasted with the situation for $\delta $%
-hyperbolic groups, where it was well-known that each such group admitted a
unique boundary---in a very stong sense. Prior to Croke and Kleiner's
discovery, it had been observed by Geoghegan and Bestvina that a weaker sort
of uniquness does hold for boundaries of torsion free CAT(0) groups; in
particular, any two such boundaries always have the same shape. Hence, the
boundary really does carry significant information about the group itself.
In an attempt to strengthen the correspondence between group and boundary,
Bestvina asked whether boundaries of CAT(0) groups are unique up to
cell-like equivalence. For the types of space that arise as boundaries of
CAT(0) groups, this is a notion that is weaker than topological equivalence
and stronger than shape equivalence.

In this paper we explore the Bestvina Cell-like Equivalence Question. We
describe a straightforward strategy with the potential for providing a fully
general positive answer. We apply that strategy to a number of test cases
and show that it succeeds---often in unexpectedly interesting ways.
\end{abstract}

\author{Craig Guilbault\and Christopher Mooney$^\ast$}
\date{\today}
\subjclass[2000]{57M07, 20F65, 54C56}
\keywords{CAT(0) boundary, group boundary, shape equivalence, cell-like equivalence}
\thanks{$^\ast$The second author was supported in part by NSF grant EMSW21-RTG: Training the Research Workforce in Geometry, Topology and Dynamics}
\maketitle

\section{Introduction}

One striking difference between the category of negatively curved groups and
that of nonpositively curved groups occurs at their ends; whereas a
$\delta$-hyperbolic group admits a topologically unique boundary, a CAT(0) group can
admit uncountably many distinct boundaries
\cite{croke-kleiner-nonrigid,wilson,mooney-knot_groups,mooney-generalizing_CK}.
On its surface, that observation might lead one to believe that a boundary
for a CAT(0) group is not a useful object, but that is not the case. Many
properties remain constant across the spectrum of boundaries of a given
CAT(0) group, and thus may be viewed as properties of the group itself. One
substantial such property, which implies many others, is the
\emph{shape} of the boundary. That observation was made indirectly by Geoghegan
\cite{geoghegan} and, specifically for CAT(0) groups, by
Bestvina \cite{bestvina-local_homology}. The upshot is that all boundaries of a given
CAT(0) group are topologically similar in a manner made precise by shape
theory---a classical branch of geometric topology developed specifically for
dealing with spaces with the sort of bad local properties that frequently
occur in boundaries of groups. Looking for an even stronger correlation
between CAT(0) groups and their boundaries, Bestvina posed the
following:\medskip 

\noindent \textbf{Bestvina's Cell-like Equivalence Question. }\emph{For a
given CAT(0) group }$G$,\emph{\ are all boundaries cell-like equivalent?}%
\medskip

Precise formulations of the notion of `shape equivalence' and `cell-like
equivalence' and their relationship to one another will be given shortly.
For now we give a quick description of the concept of cell-like equivalence
to aid in painting the big picture.

A pair of compacta $X$ and $Y$ are declared to be cell-like equivalent if
there exists a third compactum $Z$ and a pair of cell-like maps $X\overset{%
f_{1}}{\longleftarrow }Z\overset{f_{2}}{\longrightarrow }Y$. (The reader may
temporarily think of a cell-like map as a surjective map with contractible point
preimages.) To obtain an equivalence relation we permit several intermediate
spaces: $X$ and $Y$ are declared to be cell-like equivalent if there exists
a diagram of compacta and cell-like maps of the form:%
\begin{equation}
\begin{tabular}{ccccccccc}
& $Z_{1}$ &  & $Z_{3}$ &  &  &  & $Z_{2n+1}$ &  \\ 
& $\swarrow \qquad \searrow $ &  & $\swarrow \qquad \searrow $ &  & $%
\swarrow \quad \cdots \quad \searrow $ &  & $\swarrow \qquad \searrow $ & 
\\ 
$X$ &  & $Z_{2}$ &  & $Z_{4}$ &  & $Z_{2n}$ &  & $Y$%
\end{tabular}
\label{diagram:zig-zag}
\end{equation}%
Clearly, cell-like equivalence is weaker than topological equivalence;
moreover, if we require that all spaces involved be finite-dimensional,
cell-like equivalence is stronger than shape equivalence \cite{sher}.
In addition to
lying between the notions of topological equivalence and shape equivalence,
cell-like equivalence has the advantage of allowing an easily understood
equivariant variation. Compacta $X$ and $Y$, each equipped with a $G$%
-action, are declared to be `$G$-equivariantly cell-like equivalent' if
there exists a diagram of type (\ref{diagram:zig-zag}) for which each of
the $Z_{i}$ also admits a $G$-action, and each of the cell-like maps respects
the corresponding actions. Bestvina has indicated an interest in the
following:\medskip

\noindent \textbf{Bestvina's Equivariant Cell-like Equivalence Question. }%
\emph{For a given CAT(0) group }$G$,\emph{\ are all boundaries }$G$\emph{%
-equivariantly cell-like equivalent?}\medskip

In this paper we propose a general strategy for obtaining an affimative
solution to the equivariant version of Bestvina's question. That strategy is
straighforward; it is described at the end of this section. Thus far we are
unable to complete the program for arbitrary CAT(0) groups. Instead we
present some specific cases where our strategy works---sometimes in
surprising ways. In addition we develop some potentially useful
generalizations of our approach.

In the remainder of this introduction we review some basic notions related
to shape theory and cell-like equivalences. We then discuss CAT(0) spaces
and groups enough so we can go on to describe our standard strategy for
obtaining equivariant cell-like equivalences between pairs of boundaries.
Lastly we outline the main examples and results to be presented in the
remainder of the paper.

By \emph{compactum} we mean a compact metric space. There are a variety of
ways of saying what it means for compacta $X$ and $Y$ to be shape equivalent
(denoted $X\overset{\text{sh}}{\sim }Y$). One method,
due to Chapman \cite{chapman-hilbert}, involves the Hilbert
cube $Q=\prod_{i=1}^{\infty }\left[ 0,1\right] $. Embed $X$ and $Y$ in as $%
\mathcal{Z}$-sets (for example, place $X$ and $Y$ in $\left\{ 0\right\}
\times \prod_{i=2}^{\infty }\left[ 0,1\right] \subseteq Q$). Then $X\overset{%
\text{sh}}{\sim }Y$ if and only if $Q-X$ is homeomorphic to $Q-Y$. If $X$
and $Y$ are both finite-dimensional one can avoid infinite dimensional
topology by embedding $X$ and $Y$ nicely in $\mathbb{R}^{n}$, where $n$ is
large compared to the dimensions of $X$ and $Y$. Then $X\overset{\text{sh}}{%
\sim }Y$ if and only if $\mathbb{R}^{n}-X$ is homeomorphic to $\mathbb{R}%
^{n}-Y$. See \cite{sher} for details.

Another way to characterize shape is more complex, but often easier to
apply. Given a compactum $X$, we first choose an \emph{associated inverse
sequence} of finite polyhedra and continuous maps%
\begin{equation}
K_{0}\overset{f_{1}}{\longleftarrow }K_{1}\overset{f_{2}}{\longleftarrow }%
K_{2}\overset{f_{2}}{\longleftarrow }\cdots \text{.}
\label{inverse sequence}
\end{equation}%
If $X$ happens to arise as an inverse limit of finite polyhedra, then that
sequence may be chosen as the associated inverse sequence. Another way to
obtain an associated inverse sequence for $X$ is to choose a sequence of
finite covers $\left\{ \mathcal{U}_{i}\right\} _{i=0}^{\infty }$ of $X$ by $%
\varepsilon _{i}$-balls such that $\varepsilon _{i}\rightarrow 0$ and each $%
\mathcal{U}_{i}$ refines $\mathcal{U}_{i-1}$; then, for each $i$, let $K_{i}$
be the nerve of $\mathcal{U}_{i}$ and $f_{i}$ be the corresponding
simplicial map. Yet another way of obtaining an associated inverse sequence
can be applied when $X$ is finite-dimensional: embed $X$ in $\mathbb{R}^{n}$
and let $\left\{ K_{i}\right\} _{i=0}^{\infty }$ be a decreasing sequence of
polyhedral neighborhoods of $X$ with all bonding maps being inclusions.

Given an increasing sequence $\left\{ i_{k}\right\} _{k=0}^{\infty }$ of
natural numbers, there is a corresponding subsequence of (\ref{inverse
sequence})%
\begin{equation*}
K_{i_{0}}\overset{f_{i_{1},i_{0}}}{\longleftarrow }K_{i_{1}}\overset{%
f_{i_{2},i_{1}}}{\longleftarrow }K_{i_{2}}\overset{f_{i_{3},i_{2}}}{%
\longleftarrow }\cdots 
\end{equation*}%
where, for any integers $n>m$, $f_{n,m}$ is the obvious composition of the $%
f_{j}$ taking $K_{n}$ to $K_{m}$. Declare a pair of inverse sequences $%
\left\{ K_{i},f_{i}\right\} $ and $\left\{ L_{i},g_{i}\right\} $ to be \emph{%
pro-equivalent} if they contain subsequences that fit into a diagram of the
form%
\begin{equation}
\begin{array}{ccccccc}
K_{i_{0}} & \overset{f_{i_{1},i_{0}}}{\longleftarrow } & K_{i_{1}} & \overset%
{f_{i_{2},i_{1}}}{\longleftarrow } & K_{i_{2}} & \overset{f_{i_{3},i_{2}}}{%
\longleftarrow } & \cdots  \\ 
& \nwarrow \quad \swarrow  &  & \nwarrow \quad \swarrow  &  & \nwarrow \quad
\swarrow  &  \\ 
& L_{j_{0}} & \overset{g_{j_{1},j_{0}}}{\longleftarrow } & L_{j_{1}} & 
\overset{g_{i_{2},i_{1}}}{\longleftarrow } & L_{j_{2}} & \cdots 
\end{array}
\label{diagram:ladder}
\end{equation}%
where each triangle is only required to commute up to homotopy. Now $X%
\overset{\text{sh}}{\sim }Y$ if and only if associated inverse sequences are
pro-equivalent.

\begin{example}
\label{Ex: shape circles}By using the inverse sequence approach, it is easy
to see that the following examples each has the shape of a circle.
\begin{figure}[!ht]
	\begin{minipage}[b]{0.45\linewidth}
		\centering
		\includegraphics[width=0.7\linewidth]{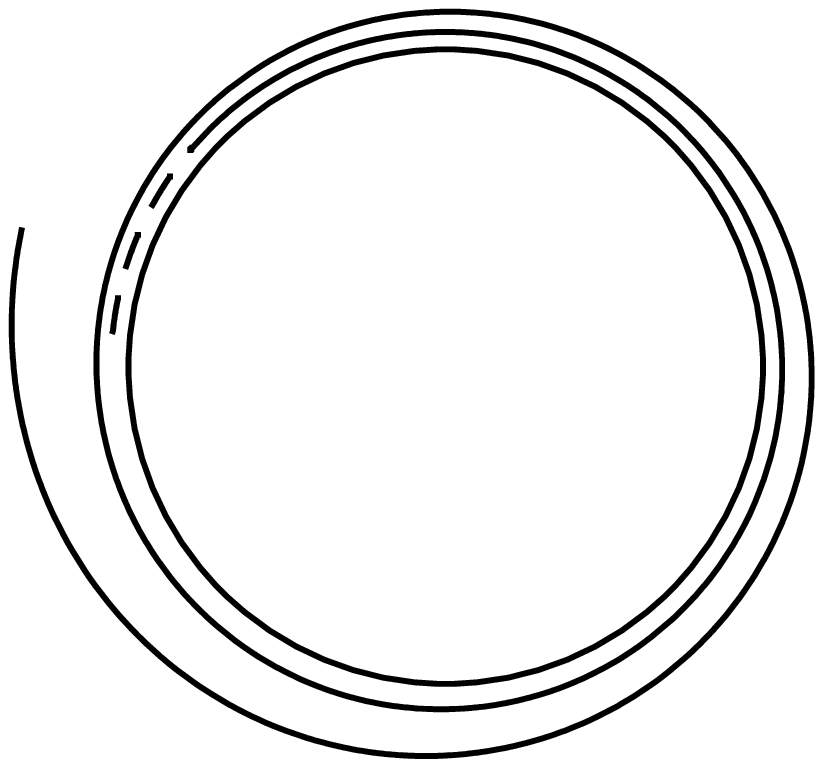}
		\caption{Ferry Spiral}
		\label{fig:ferry spiral}
	\end{minipage}
	\begin{minipage}[b]{0.45\linewidth}
		\centering
		\includegraphics[width=0.7\linewidth]{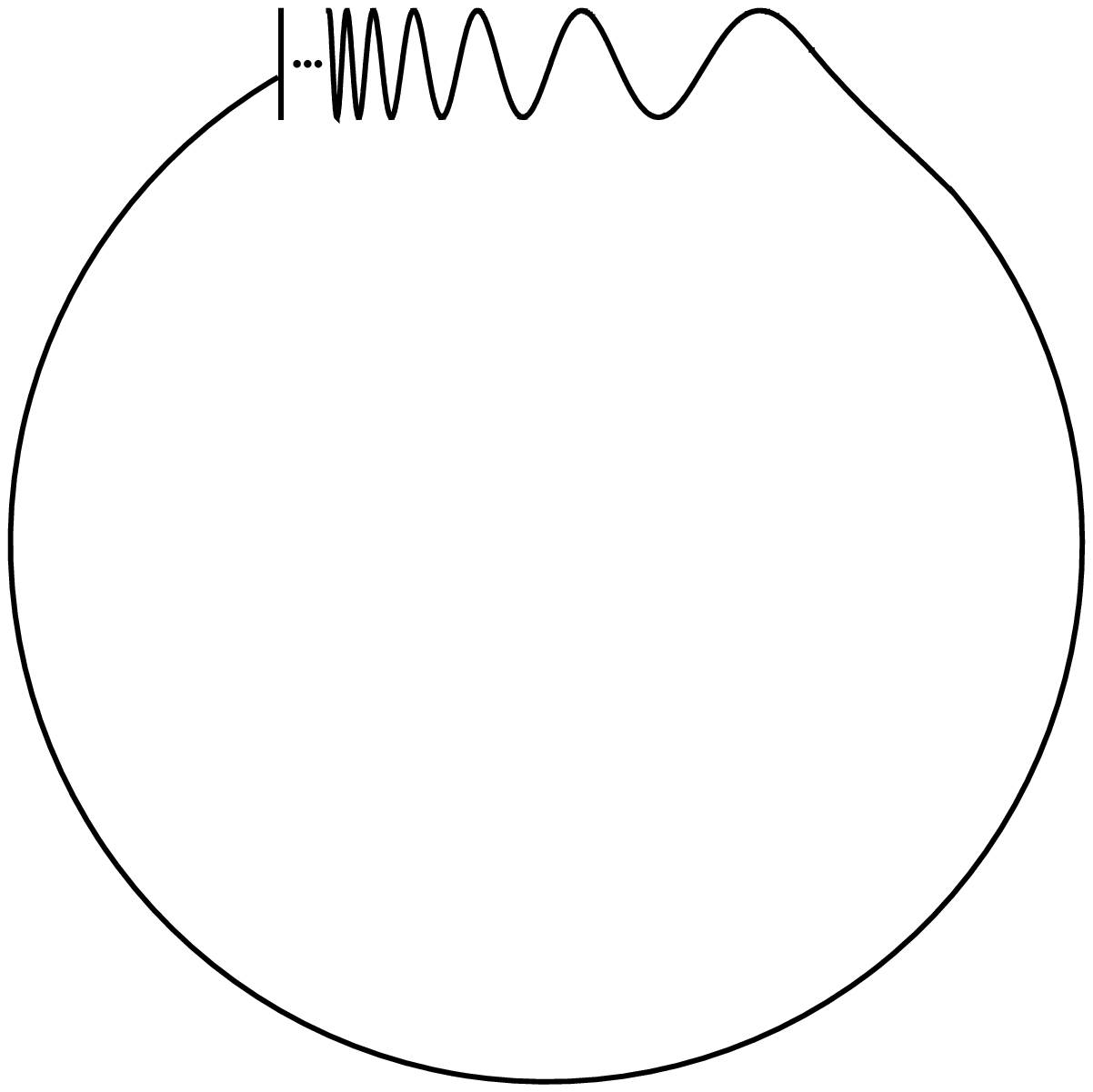}
		\caption{Warsaw Circle}
		\label{fig:warsaw circle}
	\end{minipage}
\end{figure}
\end{example}

\begin{example}
\label{Ex: sin(1/x)}By repeated application of Borsuk's homotopy extension
property, one sees that every contractible compactum has the same shape as a
point; these are the prototypical comapcta with trivial shape. An example of
a non-contractible compactum with the shape of a point is the topologist's
sine curve:
\begin{figure}[ht!]
	\centering
	\includegraphics[width=0.3\linewidth]{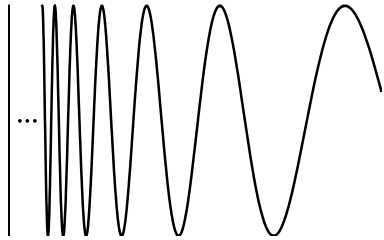}
	\caption{Topologist's Sine Curve}
	\label{fig:sine}
\end{figure}
\end{example}

\subsection{The notion of cell-like equivalence}

A compactum is \emph{cell-like }if it has the shape of a point. A map
$f:X\rightarrow Y$ between compacta is cell-like if $f^{-1}\left( y\right) $
is cell-like for each $y\in Y$. Cell-like maps have been studied
extensively---they play a central role in manifold topology. Compacta $X$
and $Y$ are \emph{cell-like equivalent} if there exists a finite sequence of
compacta $Z_{1},\cdots ,Z_{2n+1}$ $(n\geq 0)$ and cell-like maps as
described by diagram (\ref{diagram:zig-zag}); in this case we write $X%
\overset{\text{CE}}{\sim }Y$. When $X$ and $Y$ are finite-dimensional, the
existence of a cell-like map $f:X\rightarrow Y$ implies $X\overset{\text{sh}}%
{\sim }Y$; thus, if $X$ and $Y$ are `cell-like equivalent through
finite-dimensional compacta' (there exists a diagram of type (\ref{diagram:zig-zag})
for which all spaces are finite-dimensional), then $X$ and $Y$
have the same shape. Since all boundaries of CAT(0) groups are
finite-dimensional \cite{swenson}, as are all intermediate spaces utilized in
this paper, we generally think of `cell-like equivalence' as being stronger
than `shape equivalence'.

A famous counterintuitive example helps to illustrate the above definition.

\begin{example}
Let $A=\left[ 0,1\right] \times \left\{ 0\right\} \subseteq \mathbb{R}^{2}$
and $C\subseteq A$ be the middle-thirds Cantor set; let $B\subseteq \mathbb{R%
}^{2}$ be the cone over $C$ with cone-point $\left( \frac{1}{2},1\right) $.
Let $Z=A\cup B$.
Since $A$ and $B$ are contractible the quotient maps $Z\to Z/A$ and
$Z\to Z/B$ are cell-like.  The image of the latter
is the standard Hawaiian earring with countably many loops; call this space $X$.
Notice that $Z/A$ is homeomorphic to $\Sigma C$, the suspension
of a Cantor set. By crushing out a single suspension arc, we get a cell-like
map $\Sigma C\rightarrow Y$ where $Y$ is a `bigger Hawaiian earring', having
a Cantor set's worth of loops. Putting all of this together, we get that
$X$ and $Y$ are cell-like and shape equivalent (see Figure \ref{fig:hawaiian}).

\begin{figure}[ht!]
\begin{tabular}{ccccccc}
&& \includegraphics[width=0.17\linewidth]{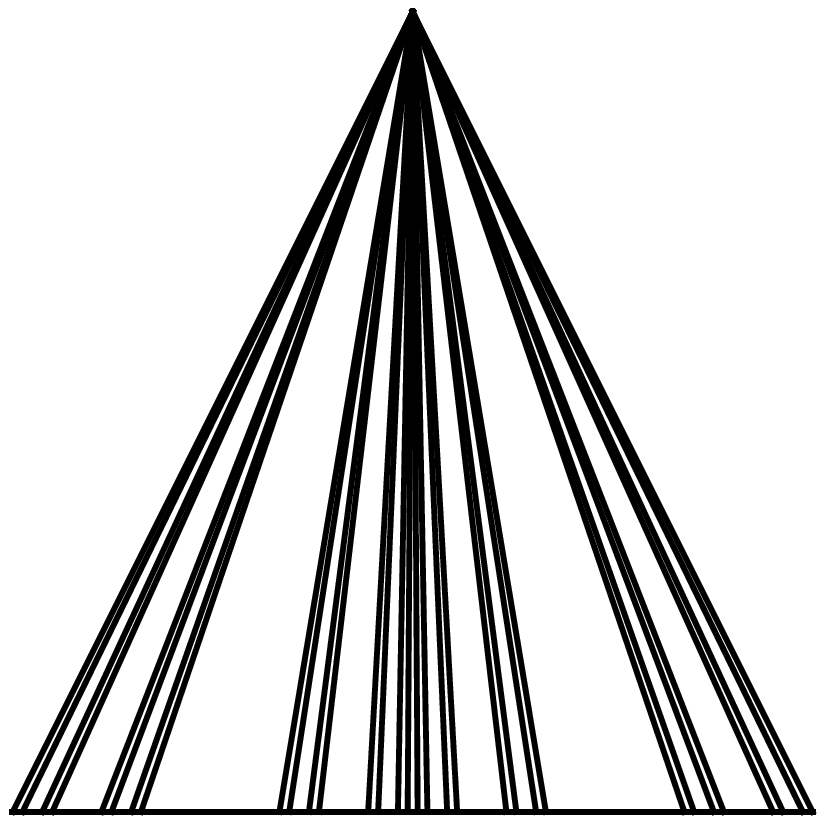}\\
&& $Z$\\
&\includegraphics[width=0.045\linewidth]{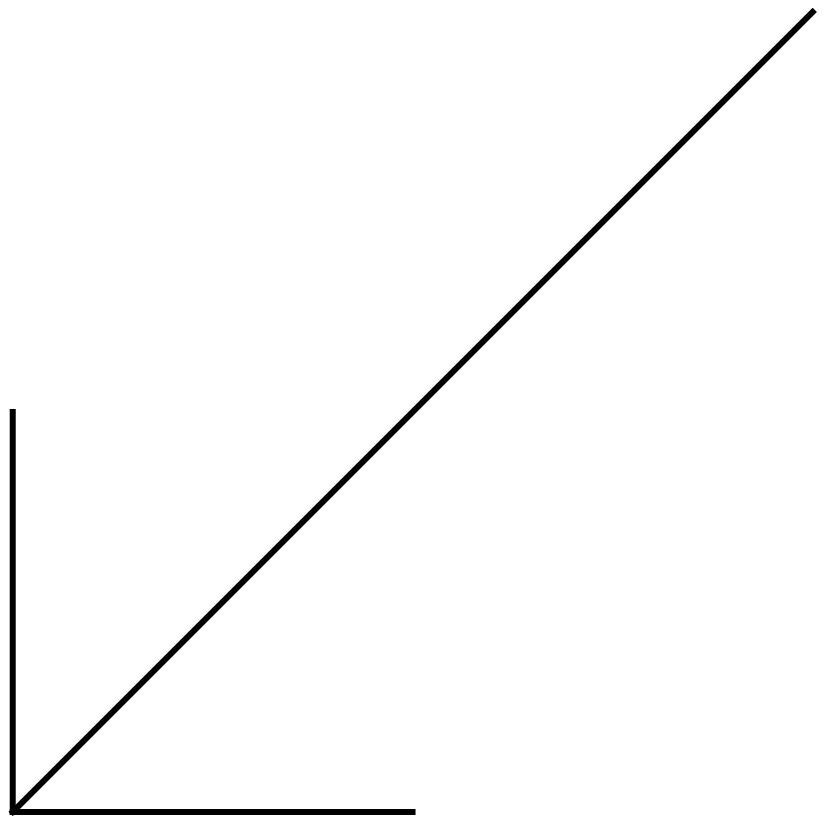}&&\includegraphics[width=0.05\linewidth]{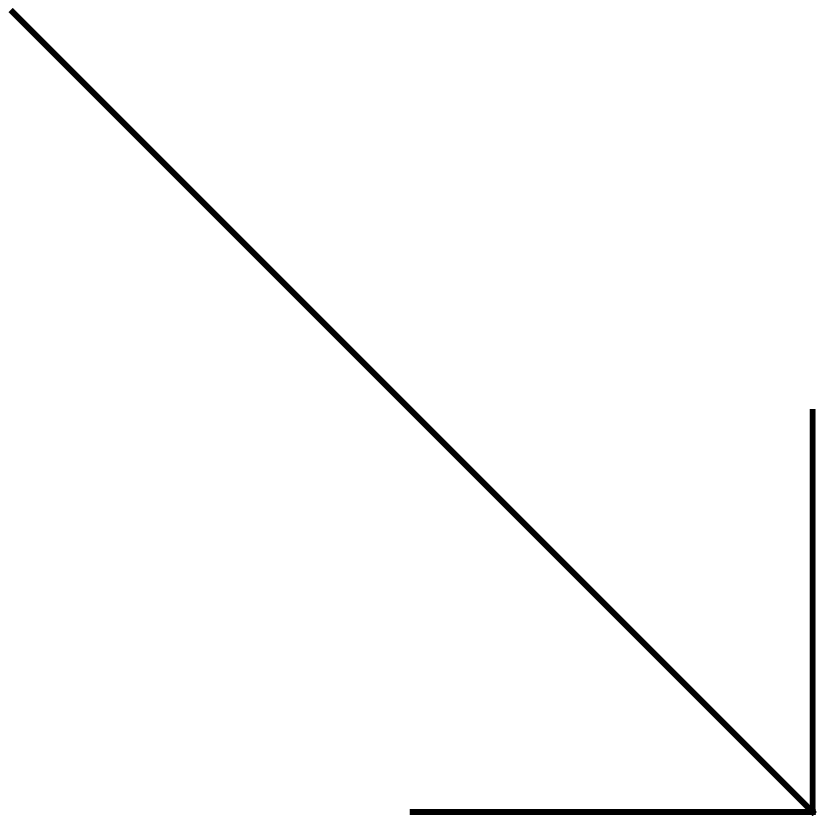}\\
\includegraphics[width=0.17\linewidth]{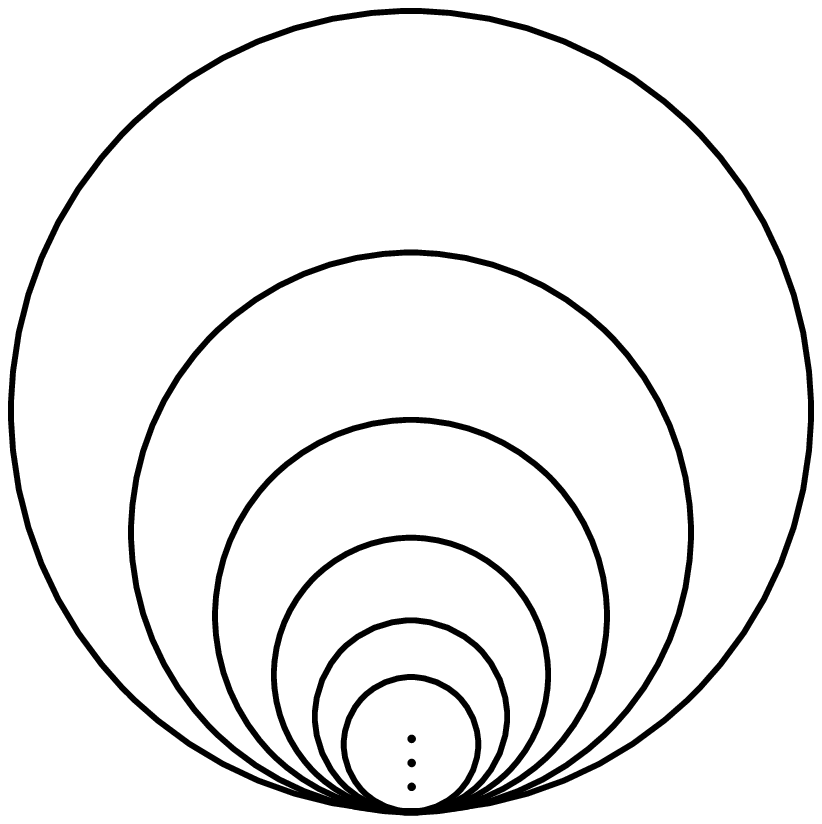} &&&&
\includegraphics[width=0.17\linewidth]{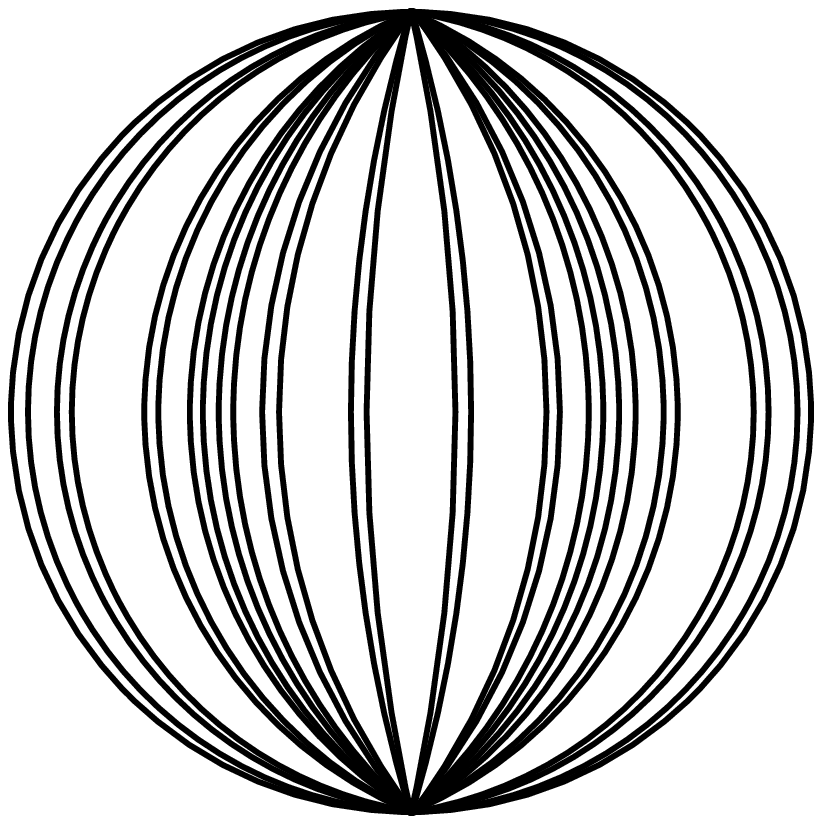} &
\parbox[h]{0.05\linewidth}
{
	\vspace{-2cm}
	\includegraphics[width=1\linewidth]{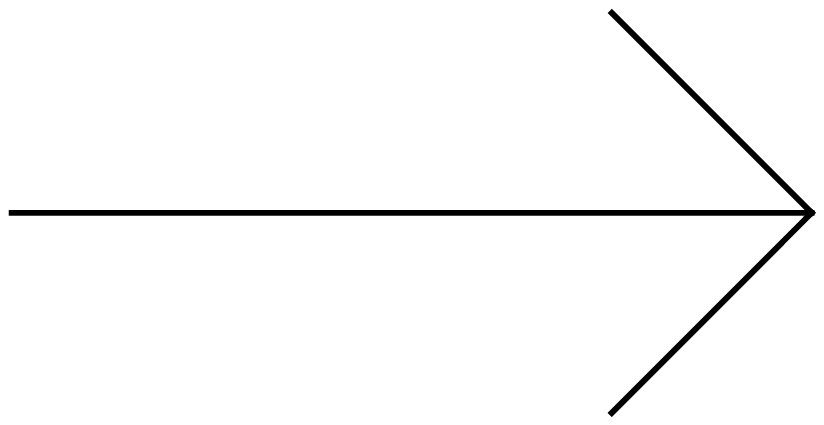}
}
&
\includegraphics[width=0.17\linewidth]{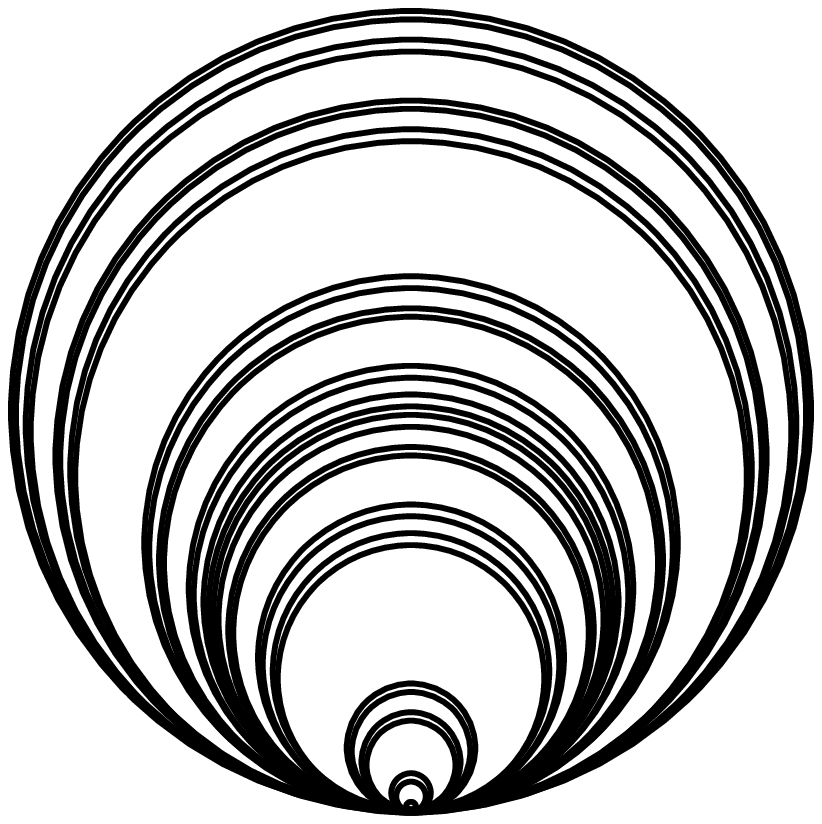} \\
$X=Z/B$ &&&& $Z/A$ && $Y$ \\
\end{tabular}
\caption{CE Hawaiian Earrings}
\label{fig:hawaiian}
\end{figure}
\end{example}

\begin{example}
An easy application of Example \ref{Ex: sin(1/x)} shows that the Warsaw
circle (see Figure \ref{fig:warsaw circle})
is cell-like equivalent to an ordinary circle.
In \cite{ferry-CE_not_SH}, it is shown that the Ferry Spiral
(see Figure \ref{fig:ferry spiral}) is not cell-like equivalent to a circle.
\end{example}

As noted in the introduction, the notion of cell-like equivalence lends
itself nicely to an equivariant version. Suppose compacta $X$ and $Y$ each
admit a $G$-action and suppose there exists a diagram of the form
(\ref{diagram:zig-zag}) where each of the compacta $Z_{i}$ also admits a $G$%
-action and each of the maps in that diagram commutes with the appropriate
actions. We say that $X$ and $Y$ are $G$\emph{-equivariantly cell-like
equivalent} and we write $X\overset{G\text{-CE}}{\sim }Y$.

\subsection{CAT(0) groups and their boundaries}

A geodesic metric space $X$ is called a \textit{CAT(0) space} if each of its triangles is at
least as thin as the corresponding comparison triangle in the Euclidean
plane. A group $G$ is called a \textit{CAT(0) group} if it acts
\textit{geometrically} (properly and
cocompactly via isometries) on a proper CAT(0) space. Such a space $X$ can
be compactified by the addition of its \textit{visual boundary} $\partial X$ which
may be defined as the space of all equivalence classes of geodesic rays in $%
X $, where a pair of rays $\alpha ,\beta :[0,\infty )\rightarrow X$ are
equivalent if they are asymptotic, i.e., if $\left\{ d\left( \alpha \left(
t\right) ,\beta \left( t\right) \mid t\in \lbrack 0,\infty )\right) \right\} 
$ is bounded above. When $G$ acts geometrically on $X$ we call $\partial X$
a boundary for $G$. Clearly, the action of $G$ on $X$ induces an action by $%
G $ on $\partial X$. An alternative, but equivalent, approach to
defining the visual boundary declares $\partial X$ to be the collection of
all geodesic rays emanating from a fixed base point $x_{0}\in X$. This
simplifies matters since no equivalence relation is needed; we will use the latter
approach when convenient.
We put the \textit{cone topology} on $\partial X$ which roughly says that two geodesic rays are close
in $\partial X$ if they track together for a long time before they diverge.
More details on CAT(0) spaces and their
boundaries, including a discussion of the topology on $\overline{X}=X\cup
\partial X$, will be presented as necessary. In addition, the reader may
wish to consult \cite{bridson-haefliger}.

Nonuniqueness of the boundary of a CAT(0) group $G$ is possible since $G$
can act on more than one CAT(0) space. When the action by $G$ is free,
covering space techniques and other topological tools allowed Bestvina
\cite{bestvina-local_homology}
to show that all boundaries of $G$ are shape equivalent. Later,
Ontaneda \cite{ontaneda} extended that obsevation to include \emph{all} CAT(0)
groups. In those cases where all CAT(0) boundaries of a given $G$ are
homeomorphic we say that $G$ is \emph{rigid}.
Clearly Bestvina's Cell-like Equivalence Question has a positive answer for all such groups.
A positive answer has also been given for groups which split as products
with infinite factors \cite{mooney-CE}.

When a group $G$ acts nicely on multiple spaces, a key relationship between
those spaces is captured by the notion of `quasi-isometry'. A function $%
f:X\rightarrow X^{\prime }$ between metric spaces is called a \emph{%
quasi-isometric embedding (QIE)} if there exist positive constants $\lambda $ and $%
\varepsilon $ such that for all $x,y\in X$%
\begin{equation*}
\frac{1}{\lambda }d\left( x,y\right) -\varepsilon \leq d^{\prime }\left(
f\left( x\right) ,f\left( y\right) \right) \leq \lambda d\left( x,y\right)
+\varepsilon \text{.}
\end{equation*}%
If, in addition, there exists a $C>0$ such that for every $z\in X^{\prime }$%
, there is some $x\in X$ such that $d^{\prime }\left( f\left( x\right)
,z\right) \leq C$, then we call $f$ a \emph{quasi-isometry} and declare $X$
and $X^{\prime }$ to be \emph{quasi-isometric}.

By choosing a finite generating set and endowing it with the corresponding
word metric, any finitely generated group can be viewed as a metric space.
It follows from the \v{S}varc-Milnor Lemma that, up to
quasi-isometry, this metric space is independent of the choice of generating
set; in fact if $X$ is any length space on which $G$ acts geometrically,
then for any base point $x_{0}\in X$ the orbit map $G\to X$ given by $g\mapsto gx_{0}$ is a quasi-isometry
\cite{svarc,milnor-note}.

Given a subset $A$ of a CAT(0) space $X$, define the \emph{limset} of $A$ to
be the collection of all limit points of $A$ lying in $\partial X$, in
other words, $\limset A=\overline{A}-X$ where the closure is taken
in $\overline{X}$. Clearly any such limset is a closed subset of $\partial X$.
If $G$ acts on a proper CAT(0) space properly discontinuously by isometries, then
we denote by $\limset(X,G)$ the limset of the image of $G$ under the orbit
map.  This provides a compactification $G\cup\limset(X,G)$ for $G$.
Note that if this action is geometric then $\limset(X,G)=\partial X$.
If $G$ acts properly discontinuously on two proper
CAT(0) spaces $X$ and $Y$, then it is natural to compare the two compactifications
$\partial X$ and $\partial Y$.  If the identity map on $G$ extends
continuously to a map $G\cup\partial X\to G\cup\partial Y$, then the restriction
$\partial X\to\partial Y$ is called a \textit{limset map}.
The existence of such a map is very strong.  It means that whenever an unbounded
sequence of group elements converges in one compactification, it also
converges in the other (see Lemma \ref{le:characterization of existence of limset maps}).
Two limsets are considered \textit{equivalent} if there is a limset map between them
which is a homeomorphism.

We call $G$ \emph{strongly rigid} if whenever $G$ acts geometrically on
proper CAT(0) spaces $X$ and $Y$, the boundaries $\partial X$ an $\partial Y$
are equivalent in the above sense.
Examples of such groups include free abelian groups, $\delta $-hyperbolic
CAT(0) groups (hereafter referred to as \emph{negatively curved} groups),
and others \cite{kleiner-leeb,hruska-kleiner}.
Clearly Bestvina's Equivariant Cell-like Equvalence Question has a positive
answer for all strongly rigid groups.

If $G$ acts properly discontinuously by isometries on $X$, then so does any subgroup
$H\leq G$.  If $H$ has infinite index, then it does not act geometrically,
since cocompactness has been lost. Moreover, even when $H$ is finitely
generated, it is not always the case that $H\hookrightarrow G$ (or
equivalently $h\mapsto hx_{0}$) is a QIE.
An object of special interest to us will be 
$\limset H$ for certain subgroups $H$ of CAT(0) groups.

Beyond the above mentioned examples of strongly rigid groups,
the question of when two boundaries of a CAT(0) group are equivalent has
been studied by Croke and Kleiner for a class of groups including graph manifold
groups \cite{croke-kleiner-graph_manifolds}, and by Hosaka in more generality
\cite{hosaka-strong_rigidity}.

\subsection{The standard strategy and our Main Conjecture}
Suppose $G$ acts geometrically on a pair of proper CAT(0) spaces $X_{1}$ and 
$X_{2}$. Then the $l_2$-metric $d=\sqrt{d_{1}^{2}+d_{2}^{2}}$ makes $%
X_{1}\times X_{2}$ a proper CAT(0) space on which $G\times G$ acts
geometrically via the product action. It is a standard fact that $\partial
\left( X_{1}\times X_{2}\right) $ is homeomorphic to the topological
join of the original boundaries \cite[Example II.8.11(6)]{bridson-haefliger}. To see this, first choose a base point $%
\left( x_{1},x_{2}\right) \in X_{1}\times X_{2}$ and define slopes of
segments and rays in $X_{1}\times X_{2}$ based at $\left( x_{1},x_{2}\right) 
$ in the obvious way. A ray $\alpha $ may be projected into $X_{1}$ and $%
X_{2}$ to obtain a pair of rays $\alpha _{1}$ and $\alpha _{2}$ ---except in
those cases where the slope is $0$ or $\infty $ which produce an $\alpha _{i}
$ that is constant. Assign to each $\alpha$ three coordinates: $\alpha_{1}$,
$\alpha_{2}$, and the slope of $\alpha$.  Keeping in mind the
exceptional cases where $\alpha $ has slope $0$ or $\infty $, we get a
correspondence between $\partial \left( X_{1}\times X_{2}\right) $ and the
quotient space%
\begin{equation*}
\partial X_{1}\ast \partial X_{2}=\partial X_{1}\times \partial X_{2}\times 
\left[ 0,\infty \right] /\sim 
\end{equation*}%
where $\left( \alpha _{1},\alpha _{2},0\right) \sim \left( \alpha
_{1},\alpha _{2}^{\prime },0\right) $ for all $\alpha _{2},\alpha
_{2}^{\prime }\in \partial X_{2}$ and $\left( \alpha _{1},\alpha _{2},\infty
\right) \sim \left( \alpha _{1}^{\prime },\alpha _{2},\infty \right) $ for
all $\alpha _{1},\alpha _{1}^{\prime }\in \partial X_{2}$. This join
contains a \emph{preferred copy} of $\partial X_{1}$ (all points with slope $%
0$) and a \emph{preferred copy} of $\partial X_{2}$ (all points with slope $%
\infty $) which may be identified with the boundaries of convex subspaces $%
X_{1}\times \left\{ x_{2}\right\} $ and $\left\{ x_{1}\right\} \times X_{2}$%
. A more thorough development of the the notion of slope may be found in
Section \ref{sec:schmear}.

Now consider the diagonal subgroup $G^{\Delta }=\{\left( g,g\right) \mid
g\in G\}$ of $G\times G$. Clearly, $G^{\Delta }$ is isomorphic to $G$ and
acts on $X_{1}\times X_{2}$ properly by isometries.
For $g\in G$, we will denote $g^\Delta=(g,g)$.
In Section \ref{subsection:intermediatelimset}, we make the following observations:\medskip

\begin{itemize}
\item[(i)] The map $g\longmapsto g^\Delta \left( x_{1},x_{2}\right) 
$ is a QIE of $G$ in $X_{1}\times X_{2}$, and

\item[(ii)] $\limset G^{\Delta }$ is a closed subset of $\partial
X_{1}\ast \partial X_{2}$ that misses the preferred copies of $\partial
X_{1} $ and $\partial X_{2}$.\medskip
\end{itemize}

\noindent We refer to $\limset G^{\Delta }$ as a \emph{schmear} of 
$\partial X_{1}$ and $\partial X_{2}$. Item (i) is used in proving (ii) and
offers hope that $\Lambda $ resembles a \textit{boundary} for $G$. Item (ii) allows
us to restrict the projections of $\partial X_{1}\times \partial X_{2}\times
(0,\infty )$ onto $\partial X_{1}$ and $\partial X_{2}$ to obtain a pair of $%
G$-equivariant \emph{schmear maps} $\phi _{1}:\Lambda \rightarrow \partial
X_{1}$ and $\phi _{2}:\Lambda \rightarrow \partial X_{2}$.

Our standard strategy is summed up by the following:\medskip

\noindent \textbf{Main Conjecture. }\emph{Suppose }$G$\emph{\ acts
geometrically on a pair of CAT(0) spaces }$X_{1}$\emph{\ and }$X_{2}$\emph{.
Then both schmear maps are cell-like; hence $\partial X_{1}$\emph{\ and }$%
\partial X_{2}$ are }$G$\emph{-equivariantly cell-like equivalent. }\medskip 

\subsection{The main results}

As noted earlier, we are not yet able to make the above program work in
full generality. In this paper, we provide positive evidence for our
approach by presenting an array of interesting examples and proving the
conjecture for a specific class of groups.

One simple, but revealing, example involves the negatively curved
group $\mathbb{F}_{2}$ which has boundary homeomorphic to a Cantor set $C$.
By strong rigidity for $\delta $-hyperbolic groups even the Equivariant
Bestvina Question is not in doubt here; one might even expect any schmear
for $\mathbb{F}_{2}$ to be just another copy of $C$. On the contrary, by
choosing different CAT(0) spaces on which $\mathbb{F}_{2}$ acts, the
resulting schmear is often not a Cantor set, but rather, is homeomorphic to $%
C\times \left[ 0,1\right] $. In those cases, all point preimages are copies
of $\left[ 0,1\right] $. See Example \ref{ex:schmear}.
Indeed we can compute schmears of CAT(0) boundaries of negatively curved groups
by applying recent work of Link \cite{link} (see Corollary \ref{Co:Link}).

In a rather different way the group $\mathbb{F}_{2}\times \mathbb{Z}$ admits
an interesting schmear.  Here we can construct a nontrivial schmear for a pair
of boundaries where the two boundaries are the boundary of the same space,
showing that the schmear depends not only on the underlying spaces themselves, but
also on the action chosen.
In \cite{bowers-ruane} Bowers and Ruane utilized a
standard and a twisted action of $\mathbb{F}_{2}\times \mathbb{Z}$ on $%
T\times \mathbb{R}$, where $T$ is the standard valence four tree with edge
lengths equal to $1$, to show that this group is rigid but not strongly
rigid. Inserting those actions into our program produces a pair of schmear
maps for which some point preimages are intervals and the rest are
singletons. In some sense the schmear and its corresponding maps provide a
missing link between the standard and twisted action.
See Section \ref{examples of schmears}.

Our first theorem proves the existence of schmears.
Note that the statement applies to a broader class of group actions on CAT(0) spaces
than simply geometric actions.

\begin{Thm}
\label{Thm:Schmear}
Assume an infinite group $G$ acts on CAT(0) spaces $X_1$ and $X_2$ such that
$G\to X_1$ and $G\to X_2$ are QIEs.
Then there exists an action of $G$ by isometries
on a third CAT(0) space $X$ such that $G\to X$ is a QIE and there are natural limset
maps $\limset G\to\partial X_i$.
If the action of $G$ on both $X_i$ is by semi-simple isometries, then so is the action on $X$.
\end{Thm}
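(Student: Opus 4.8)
The plan is to take $X$ to be the product $X_1\times X_2$ equipped with the $l_2$-metric $d=\sqrt{d_1^2+d_2^2}$, which is CAT(0) whenever $X_1$ and $X_2$ are, and to let $G$ act through the diagonal $G^{\Delta}$ via $g\cdot(x_1,x_2)=(gx_1,gx_2)$. Fixing base points $x_1^0\in X_1$ and $x_2^0\in X_2$ and writing $p=(x_1^0,x_2^0)$, the orbit map is $g\mapsto g^{\Delta}p=(gx_1^0,gx_2^0)$, and my first task is to check that it is a QIE. The upper bound is immediate, since $\sqrt{d_1^2+d_2^2}$ is dominated by a linear function of $d_1$ and $d_2$, each of which is at most linear in $d_G$ by hypothesis; for the lower bound one simply notes $\sqrt{d_1^2+d_2^2}\ge d_1\ge \frac{1}{\lambda_1}d_G-\varepsilon_1$. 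This step is routine.

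The heart of the argument is to show that $\limset G^{\Delta}$ misses the preferred copies of $\partial X_1$ and $\partial X_2$, since this is exactly what allows the join projections to restrict to honest maps. Using the slope development of Section~\ref{sec:schmear}, the slope of the segment from $p$ to $g^{\Delta}p$ is, up to the conventions there, the ratio $d_2(x_2^0,gx_2^0)/d_1(x_1^0,gx_1^0)$. Because both orbit maps are QIEs, both numerator and denominator are comparable to $d_G(e,g)$, so this ratio is trapped in a fixed compact subinterval of $(0,\infty)$ once $d_G(e,g)$ is large; concretely it lies between roughly $1/(\lambda_1\lambda_2)$ and $\lambda_1\lambda_2$. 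Since the group is infinite, any point of $\limset G^{\Delta}$ is a limit of $g_n^{\Delta}p$ with $d_G(e,g_n)\to\infty$, so its slope lies in $(0,\infty)$ and hence belongs to neither the slope-$0$ nor the slope-$\infty$ copy. I expect this to be the main obstacle, precisely because it is the one place the two-sided QIE hypothesis is essential: without the lower QIE bound in one factor the slope could degenerate to $0$ or $\infty$ and the corresponding projection would fail to be defined.

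With the preferred copies avoided, I would define the two limset maps as the restrictions to $\limset G^{\Delta}$ of the join projections $\partial X_1\ast\partial X_2\to\partial X_i$, which are continuous away from the preferred copies. To see these are genuine limset maps in the sense defined above, I would invoke Lemma~\ref{le:characterization of existence of limset maps}: it suffices to verify that whenever $g_n^{\Delta}p$ converges to an interior join point $\xi=(\xi_1,\xi_2,\theta)$ with $\theta\in(0,\infty)$, the factor orbit $g_nx_i^0$ converges to $\xi_i$ in $\partial X_i$. This follows from the cone-topology description of convergence together with the fact that, since $\theta\in(0,\infty)$, the $X_i$-projection of the limiting ray $[p,\xi)$ is itself an infinite ray to which the projected segments $[x_i^0,g_nx_i^0]$ converge; here one again uses $d_i(x_i^0,g_nx_i^0)\to\infty$, which comes from the QIE into the $i$-th factor.

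Finally, for the semi-simple statement, recall that $g^{\Delta}$ acts on the product with displacement function $(x_1,x_2)\mapsto\sqrt{d_1(x_1,gx_1)^2+d_2(x_2,gx_2)^2}$. Since the two summands depend on independent variables, its infimum decouples as $\sqrt{\ell_1(g)^2+\ell_2(g)^2}$, where $\ell_i(g)=\inf_{x_i}d_i(x_i,gx_i)$, and it is attained precisely on $M_1\times M_2$, where $M_i\subseteq X_i$ is the set on which $x_i\mapsto d_i(x_i,gx_i)$ attains its minimum. If $g$ acts as a semi-simple isometry on each $X_i$, both $M_1$ and $M_2$ are nonempty, hence so is $M_1\times M_2$, and therefore $g^{\Delta}$ attains its displacement. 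Thus the diagonal action is by semi-simple isometries. This step is clean and needs no further input.
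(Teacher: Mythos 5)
Your proposal is correct and takes essentially the same route as the paper: form $X=X_1\times X_2$ with the diagonal $G^\Delta$-action, check the orbit map is a QIE, use the two-sided QIE bounds to trap the slope of $\limset G^\Delta$ in a compact subinterval of $(0,\infty)$ so that it misses the preferred copies of $\partial X_1$ and $\partial X_2$, restrict the join projections to get the limset maps, and obtain semi-simplicity from the product structure of minsets. The only cosmetic difference is that you verify the QIE and minset decoupling directly, where the paper invokes the \v{S}varc-Milnor Lemma and Proposition \ref{prop:minsetproduct}.
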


It follows from \cite{bowers-ruane} that Bestvina's Equivariant Cell-Like Equivalence Question
has a positive answer for products of negatively curved groups with free abelian groups.
They prove that any pair of boundaries is equivariantly homeomorphic.
Since this equivariant homeomorphism does not come from the orbit map,
this next theorem, which verifies the Main Conjecture for a particular subclass of such groups,
is stronger.

\begin{Thm}
\label{Thm:CE}
Assume $G=\F_m\times\Z^d$ acts geometrically on two CAT(0) spaces $X_1$ and $X_2$ and $\Lambda$ be the schmear
of the pair $\partial X_1$,$\partial X_2$.  Then point preimages for the schmear maps $\Lambda\to\partial X_i$ are
topological cells.
\end{Thm}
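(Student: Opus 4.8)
The plan is to analyze the schmear $\Lambda=\limset G^\Delta$ inside the join $\partial X_1 * \partial X_2$ very concretely, using the product structure of $G=\F_m\times\Z^d$. First I would decompose the group as $\F_m\times\Z^d$ and correspondingly analyze how the two geometric actions interact. By Theorem \ref{Thm:Schmear}, the diagonal action gives a QIE $G\to X_1\times X_2$, and since $G$ is a product of a negatively curved group with a free abelian group, both $\partial X_1$ and $\partial X_2$ are known topologically: each is a join of the form $C * S^{d-1}$ (a Cantor set joined with a sphere coming from the $\Z^d$ factor), as in the Bowers--Ruane analysis \cite{bowers-ruane}. The key is that a geodesic ray in $X_i$ representing a boundary point decomposes according to how much of its ``energy'' lies in the $\F_m$-direction versus the $\Z^d$-direction, and the $\Z^d$-translation part is rigid (governed by translation vectors of the central/abelian elements).

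The central step is to understand a point preimage $\phi_1^{-1}(\xi)$ for a boundary point $\xi\in\partial X_1$ and show it is a cell. I would fix $\xi$ and examine all $\lambda\in\Lambda$ with $\phi_1(\lambda)=\xi$; such a $\lambda$ is a limit of diagonal orbit points $g_n^\Delta(x_1,x_2)$ whose $X_1$-component converges to $\xi$. The plan is to show that the set of ``compatible'' $X_2$-boundary behaviors, together with the slope coordinate in $[0,\infty]$, is parametrized by a convex-like region. Because the $\F_m$-part is negatively curved, once the $\F_m$-direction of the geodesic in $X_1$ is pinned down by $\xi$, strong rigidity for $\F_m$ forces the corresponding $\F_m$-direction in $X_2$ to be determined as well; the only remaining freedom comes from the $\Z^d$-factor, where the translation lattice can produce a genuine interval (or higher-dimensional cell) of asymptotic directions---exactly the phenomenon flagged in the $\F_2\times\Z$ example of Section \ref{examples of schmears}. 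I would organize this via the slope/translation-vector machinery promised in Section \ref{sec:schmear}, expressing $\phi_1^{-1}(\xi)$ as the set of limit points of a family of diagonal geodesics whose $X_1$-slope is prescribed.

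Concretely, I expect the preimage to be cut out as $\{(\xi,\eta,t)\}$ where $\eta$ ranges over a sphere's worth of abelian directions consistent with $\xi$ and $t$ ranges over an interval of allowable slopes, with the degenerate identifications at the ends of the join collapsing appropriately. The plan is to exhibit an explicit homeomorphism from this set onto a standard cell $D^k$ (for the appropriate $k$ depending on $d$ and on whether $\xi$ lies in the Cantor part or the spherical part of $\partial X_1$), checking the boundary identifications at slope $0$ and slope $\infty$ do not destroy the cell structure but rather realize the preimage as a cone or product of cones. Handling the two complementary cases---$\xi$ a ``conical'' (Cantor-set) direction versus $\xi$ lying in the spherical $\Z^d$-part---separately should cover $\partial X_1$; by symmetry the same argument applies to $\phi_2$.

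The hard part will be proving that the parametrization above is both surjective onto the geometrically defined preimage and injective enough to be a homeomorphism: I must rule out ``extra'' limit points of $G^\Delta$ that the naive product picture does not predict, and I must control the slope coordinate uniformly so that the interval of allowable slopes varies continuously (and never degenerates to a point in a way that breaks local cell structure). This amounts to a careful limit-point computation showing that an unbounded diagonal sequence $g_n^\Delta$ converges in $\partial X_1 * \partial X_2$ exactly when its $X_1$- and $X_2$-components converge compatibly, which is where the QIE property and the rigidity of the abelian factor must be leveraged most delicately. Establishing that the resulting region is genuinely a topological cell---rather than merely a cell-like or contractible set---is the crux, and I anticipate the abelian-direction analysis (identifying the allowable $\eta$ with a convex polytope of translation vectors) to be the technical heart of the argument.
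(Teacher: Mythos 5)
There is a genuine gap, and it sits exactly where you flag it yourself: you never supply a mechanism for proving that the preimage is a topological cell, and the mechanism the paper uses is an idea absent from your outline. The paper's proof first uses the Flat Torus Theorem to split each $X_i=Y_i\times E_i$ and then embeds the fiber $\Lambda_\eta$ over a point $\eta\in\partial\F_m$ into Euclidean space $E_1\times E_2\times(0,\infty)\subset\E^{2d+1}$ via the coordinates $(\slopevec_1,\slopevec_2,1/\nutoo)$, where $\nutoo$ is the ratio of horizontal displacements in $Y_2$ versus $Y_1$. The crux is then a \emph{sequence averaging lemma}: given two points $\zeta,\zeta'$ of a point preimage, one chooses sequences of \emph{straight} elements $g_n,h_n$ of $\F_m\times\Z^d$ (elements whose axes pass uniformly close to the basepoint) converging to $\zeta$ and $\zeta'$, raises them to carefully balanced powers $s_n,t_n$, and shows that the products $c_n=g_n^{s_n}h_n^{t_n}$ converge to the \emph{midpoint} of the segment $[\zeta,\zeta']$ in these Euclidean coordinates. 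Iterating and taking closures shows the preimage is a compact \emph{convex} subset of $\E^{2d+1}$, and compact convex sets are cells. Without some substitute for this averaging argument, your plan of "exhibiting an explicit homeomorphism onto a standard cell $D^k$" has no way to get started, precisely because (as you admit) contractibility or cell-likeness is not enough and nothing in the join/limit-point bookkeeping by itself rules out wild compacta as preimages.

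A second, related problem is that your structural picture of the preimage is wrong. You assert that once strong rigidity pins down the $\F_m$-direction, "the only remaining freedom comes from the $\Z^d$-factor," parametrized by a sphere's worth of abelian directions times an interval of slopes. But the diamonds construction (Example \ref{ex:diamonds}) and the three-dimensional schmear (Example \ref{ex:3 dimensional schmear}) show that an essential part of the freedom comes from the free-group geometry itself: sequences with the same abelianization ratios but different letter shufflings (e.g.\ $a^{n+n^2}b^{n^2}$ versus $a^n(ab)^{n^2}$) limit to different points of the preimage, because they change the horizontal-displacement ratio $\nutoo$. So the fiber is not a product of a polytope of translation vectors with a slope interval; it is an a priori arbitrary compact subset of $E_2\times(0,\infty)$, and only the convexity argument shows it is a cell. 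Your case division between "Cantor part" and "spherical part" of $\partial X_1$ does match the paper (preimages over $\partial E_1$ are singletons, by a separate argument using Lemma \ref{le:technicalabelianlimset} and convexity of the metric), but the heart of the theorem is the averaging construction, which your proposal does not contain.
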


In other words, regardless of the CAT(0) spaces and geometric actions chosen, point
preimages under the schmear maps are homeomorphic to cells of various
dimensions. This is somewhat surprising when compared to recent work by
Staley \cite{staley} who uses these same groups to realize some exotic limsets
for images of geodesic rays under equivariant quasi-isometries. The lesson learned from
our theorem seems to be that \textquotedblleft taking the whole
schmear\textquotedblright\ has a tendency to paint over oddities in the
local behavior of limsets. Close examination of the examples and results
presented here will help make sense of this last comment.

Our final result addresses a natural question to this approach.  We have constructed
a single intermediate compactum $Z$ admitting an action by the group $G$ in question
by letting $Z$ be the limset of $G$ under some non-cocompact action.  One may ask
``Can $Z$ be realized as a boundary of $G$?''  The answer is ``No''.
If we seek to find equivariant cell-like equivalences between two boundaries of $G$
by taking the extensions of QIEs, then we must pass through limsets of non-cocompact
actions which cannot themselves be realized as boundaries of the group in question.

\begin{Thm}
\label{Thm:NotComparable}
Let $\F_2\times\Z$ act geometrically on two CAT(0) spaces $X$ and $Y$.  If there
is a limset map $\partial X\to\partial Y$, then that limset map is a homeomorphism.
\end{Thm}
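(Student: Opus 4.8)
The plan is to pin down the topology and the $G$-action on each boundary, and then to show that the mere \emph{existence} of a limset map is rigid enough to force it to be a bijection. First I would exploit the central $\Z=\langle t\rangle\leq\F_2\times\Z$. Since $t$ is central and the action is geometric, it acts as a semi-simple isometry whose minimal set $\Min(t)$ is a $G$-invariant cobounded convex subset (hence has the same boundary as $X$) splitting as a product $Z_0\times\R$, with $t$ translating the $\R$-factor (Flat Torus Theorem, \cite{bridson-haefliger}). Thus $\partial X=\partial Z_0\ast S^0$ is the suspension $\Sigma(\partial Z_0)$, the two suspension points $p_\pm$ being the ends of the $\R$-axis. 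The $\F_2$-factor acts geometrically on $Z_0$, so $\partial Z_0$ is the Cantor set $C=\partial\F_2$; because $\F_2$ is negatively curved, hence strongly rigid, this Cantor set carries a canonical ($\F_2$-equivariant homeomorphism) identification across different spaces. Two bookkeeping facts remain: $t$ is a Clifford translation of $Z_0\times\R$, so $\Z$ acts trivially on $\partial X$; and $\F_2$ acts on $\Sigma C$ through its boundary action on the $C$-coordinate, fixing $p_\pm$ and preserving the suspension parameter. The same description holds for $\partial Y$.

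Next I would locate the images of the poles. A limset map $\psi$ is $G$-equivariant (it extends $\id_G$) and surjective (its extension has compact image containing the dense set $G$, hence all of $G\cup\partial Y$). The $G$-fixed points of $\partial X$ are exactly $\{p_+,p_-\}$: since $\Z$ acts trivially, a fixed point must be $\F_2$-fixed, and $\F_2$ has no global fixed point on $C$ (the generators $a,b$ already have disjoint fixed pairs), while the poles are fixed. Equivariance together with surjectivity therefore forces $\psi$ to carry $\{p_+,p_-\}$ bijectively onto the poles of $\partial Y$.

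The heart of the argument is the interior $\Sigma C\smallsetminus\{p_\pm\}\cong C\times\R$. Here I would introduce the \emph{slope homomorphism} $\tau\colon\F_2\to\R$ recording the translation part of the $\F_2$-action on the $\R$-factor, and compute that a sequence $g_n=(w_n,k_n)$ converges in $\partial X$ with $C$-coordinate $\lim w_n\in C$ and suspension slope $\lim\,(k_n+\tau_X(w_n))/\ell_X(w_n)$, where $\ell_X$ denotes displacement in $Z_0$. The decisive step is to show that the existence of $\psi$ forces $\tau_X=\tau_Y$. For this I would compare two sequences limiting to the \emph{same} boundary point, say the straight sequence $a^n$ and the wandering sequence $a^nba^{-n}$ (both tending to $a^\infty\in C$), choosing the $\Z$-coordinates so that both converge to the same slope in $\partial X$. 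Because $\tau_X$ and $\tau_Y$ evaluate completely differently on these two words, their limiting slopes in $\partial Y$ disagree unless $(\tau_Y-\tau_X)(a)=0$; running over a generating set yields $\tau_X=\tau_Y$. I expect this to be the main obstacle: single-valuedness of a genuine limset map is precisely what collapses these wandering sequences—they are the exotic configurations produced by Staley \cite{staley}—and the technical work is to push the computation through an arbitrary (non-product) CAT(0) space, controlling the quasi-isometric distortion between word length and the displacements $\ell_X,\ell_Y$.

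Finally, with $\tau_X=\tau_Y$ in hand, the limset map takes the coordinate form $\psi(c,s)=(\bar c,\rho(c)\,s)$, where $c\mapsto\bar c$ is the canonical $\F_2$-boundary homeomorphism on $C$ and $\rho(c)=\lim\ell_X(w_n)/\ell_Y(w_n)\in(0,\infty)$ is the displacement ratio in direction $c$, bounded away from $0$ and $\infty$ by the quasi-isometry constants. Existence and continuity of $\psi$ make $\rho$ a continuous positive function (equivariance in fact forces $\rho$ to be $\F_2$-invariant, hence constant by minimality of the action on $C$), so $\psi$ preserves the equator and the poles setwise, is injective (distinct $\bar c$ or distinct $\rho(c)s$), and is a continuous surjection between compacta—hence a homeomorphism, as claimed.
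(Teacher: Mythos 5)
Your proposal is correct in its essentials, and it reaches the conclusion by a genuinely more constructive route than the paper's. Both arguments run on the same engine: after the Flat Torus splitting $X=Z_0\times\R$, a sequence $(w_n,c^{k_n})$ converges in $\partial X$ exactly when its $\partial\F_2$-direction and its slope converge, and for sequences with the same (or comparable) $\F_2$-parts the $X$-slopes and $Y$-slopes are related affinely with coefficients bounded away from $0$ and $\infty$. The paper argues by contradiction: using its schmear machinery (Lemma \ref{lemma:Sigmahomeo} and Proposition \ref{prop:bigcommutingdiagram}) it shows any limset map preserves poles and longitudes, so non-injectivity would collapse two distinct slopes $\zeta_1\neq\zeta_1'$ on a single longitude; it then reuses the \emph{same} words $w_n$ with re-chosen powers $c^{i_n}$, so that the difference of $X$-slopes is proportional to the difference of $Y$-slopes with bounded ratio --- the vertical translations of the $w_n$ cancel in these differences --- forcing $\zeta_1=\zeta_1'$. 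You instead compute the map outright: you first prove the shear homomorphisms agree ($\tau_X=\tau_Y$, suitably normalized by the translation lengths of $c$) by playing straight words $a^n$ against conjugates $a^nba^{-n}$, for which $\tau$ stays bounded while displacement grows linearly, and then derive the coordinate form $\psi(c,s)=(\bar c,\rho(c)s)$, from which the homeomorphism conclusion is immediate. The paper's cancellation trick is precisely what lets it avoid ever identifying $\tau_X$ with $\tau_Y$; your route is longer and needs hyperbolicity of $Z_0$ (stability of quasi-geodesics) at the same point the paper needs it, but it yields strictly more, namely an explicit description of every limset map.

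Two steps in your write-up need repair, though both are fillable with tools you already deploy. First, equivariance plus surjectivity do not by themselves force the poles to map bijectively onto the poles: they give $\psi(\{p_\pm\})\subseteq\{q_\pm\}$, but $\psi^{-1}(q_-)$ is merely a nonempty closed $G$-invariant set, and the interior $C\times\R$ contains such sets (each equator copy $C\times\{m\}$ is $G$-invariant, since $\Z$ acts trivially and $\F_2$ preserves slopes), so nothing at that stage prevents an interior point from mapping to a pole. This is rescued only after the coordinate form is in hand (finite slope maps to finite slope), or by an argument along the lines of Lemma \ref{lemma:Sigmahomeo}; you should reorder accordingly. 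Second, the displacement ratio $\rho(c)=\lim \ell_X(w_n)/\ell_Y(w_n)$ is \emph{not} well-defined a priori: its dependence on the sequence $(w_n)$ rather than on the direction $c$ alone is exactly the phenomenon that makes the schmears of Example \ref{ex:schmear} nontrivial. You must say explicitly that the existence of $\psi$, applied to sequences of a fixed nonzero slope $s$ (so that the $Y$-slope limit equals $s$ times the ratio limit once $\tau_X=\tau_Y$ is known), is what forces all these limits to exist and coincide. With those two repairs the argument goes through.
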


\begin{rem}
If $H\le G$ is a finite index subgroup of a group $G$, then information about
$G$ gives information about $H$ -- if $G$ is CAT(0), then so is $H$ and every boundary
of $G$ is also a boundary of $H$.  The reverse direction is not understood.
It is currently an open question whether $H$ being CAT(0) implies that $G$ is also CAT(0),
and there are cases where $H$ and $G$ are both CAT(0), but $G$ has fewer boundaries than $H$.
One nice aspect of the approach taken in this paper is that all of our results which apply to
a given CAT(0) group H are immediately valid for any CAT(0) group containing H as a finite index subgroup
(see Proposition \ref{prop:finiteindex}).
\end{rem}

\subsection*{Acknowledgements}
We thank Kim Ruane.

\section{Examples}
\label{sec:examples}
In this section we provide concrete examples of the interesting end behavior of some simple CAT(0) groups as well as schmears.

\subsection{Boundaries of \texorpdfstring{$\F_2\times\Z$}{F2 x Z}}
It was first observed by Bowers and Ruane in \cite{bowers-ruane} that in contrast to the situation for negatively curved
and free abelian groups, equivariant quasi-isometries need not extend to homeomorphisms of boundaries.
This is true even
for $\F_2\times\Z$, which is perhaps the simplest example of a CAT(0) group which is neither negatively curved nor
free abelian.  In this section we provide several
variations on this example.

The main theorem of \cite{bowers-ruane} says that whenever \(\F_2\times\Z\) acts geometrically on a CAT(0) space $X$,
\(\partial X\) is homeomorphic to the suspension of the Cantor set.  By the Flat Torus Theorem, one can assume that
$X$ splits as a product $Y\times\R$ where $\Z$ is generated by a translation in the $\R$-coordinate
and the action of $\F_2$ projects to a geometric action on $Y$.  The suspension points of \(\partial X\)
are called \textit{poles}.  The subspace \(\partial Y\), which is homeomorphic to the Cantor set,
is called the \textit{equator}.  The suspension arcs are called \textit{longitudes}.

Typically longitudes are parameterized using angles.  For us it will be convenient to parameterize them in terms of slopes:
\[
	\partial X=\partial Y\times[-\infty,\infty]/\sim
\]
where $\sim$ collapses the sets \(Y\times\{\infty\}\) and \(Y\times\{-\infty\}\) to the poles.
Take a geodesic ray $\alpha$ based at a point \((y_0,0)\in Y\times\R\) which does not go to a pole.
Let $\alpha_Y$ be its projection onto
\(Y\times\{0\}\); \(\alpha_Y(\infty)\) is a point of the equator.
Then $\alpha$ lives in the half-plane \(F=\alpha_Y\times\R\) and has a slope $M(\alpha)$ defined in
terms of these coordinates.
The boundary of $F$ is the longitude containing $\alpha(\infty)$.  Given \(\zeta\in\partial X\), we will denote by
$l(\zeta)$ the longitude containing $\zeta$ and by $l(\zeta,M)$ the point of $l(\zeta)$ corresponding to a ray of slope $M$.
So \(l(\zeta,\pm\infty)\) are the poles and \(l(\zeta,0)\) lies in the equator.

\begin{example}[Twisting]
\label{ex:twisting}
Let $a$ and $b$ generate $\F_2$ and $c$ denote the generator of $\Z$.
Let $\Gamma$ denote the Cayley graph of $\F_2$, an infinite 4-valent tree with $v\in\Gamma$ a vertex.
Then we have a natural product action of $\F_2\times\Z$ on \(X=\Gamma\times\R\) by letting $c$ translate
$\R$ one unit.  We will denote the action of an element $g$ on a point $x\in X$ by $g\cdot x$.
In coordinates, we have
\begin{align*}
	a\cdot(v,t) &= (av,t), \\
	b\cdot(v,t) &= (bv,t), \\
\textrm{and }
	c\cdot(v,t) &= (v,t+1).
\end{align*}
Now we consider another action on the same space by ``twisting'' one of the generators.  Specifically, we change
the action by letting the action of $b$ on the $\R$-coordinate be the same translation as $c$.  We denote the resulting
action by $\ast$.  So we now have
\begin{align*}
	a\ast(v,t) &= (av,t), \\
	b\ast(v,t) &= (bv,t+1), \\
\textrm{and }
	c\ast(v,t) &= (v,t+1).
\end{align*}

Now if the identity map $G\to G$ is to extend to a continuous function between these two boundaries,
it must be the case that if a sequnce of group elements converges to a point of $\partial X$ in the
first action, then it also converges to a point of $\partial X$ in the second action.
We claim that this is not the case.

Our first observation is that in $\Gamma$, the sequences
\((a^nv)\) and \((a^nb^{n^2}v)\) converge to the same point of \(\partial\Gamma\);
we will call this point $a^\infty$.  In $X$ it follows that the sequences \((a^n\cdot x)\) and \((a^nb^{n^2}\cdot x)\)
have their limit point in \(l(a^\infty)\).  In fact they converge to the same point --
since they both only act in the $\Gamma$-coordinate, they both converge to $l(a^\infty,0)$.

Now consider the second action $G\ast X$.
The sequence \((a^n\ast x_0)\) also converges to $l(a^\infty,0)$.  But this time
\((a^nb^{n^2}\ast x_0)\) converges to a ray of slope 1.
For, the slope of the line segment \([x_0,a^nb^{n^2}\ast x_0]\) is
\[
	\frac{n^2}{n+n^2}\to 1.
\]
This means that the sequences \((a^n)\) and \((a^nb^{n^2})\) converge to the same boundary point in one action and two different
points in the other action.  So there is no limset map $\partial X\to\partial X$ which respects the group actions.

Given $\zeta\in\partial X$, let $\Lambda(\zeta)$ denote the subset of $\partial X$ consisting of points $\zeta'$ for which there
is a sequence $(g_n)$ such that $g_n\cdot x\to\zeta$ and $g_n\ast x\to\zeta'$.  Then $\Lambda(a^\infty)$ 
is an interval in the longitude of $a^\infty$ stretching between $l(a^\infty,1)$ and
$l(a^\infty,-1)$.
The sequences in $\F_2$ converging to $a^\infty$ in the action $G\cdot X$ are those
of the form $a^{k_n}w_n$ where $k_n\to\infty$.  But under the action $G\ast X$, such sequences have varying slope depending
on the asymptotic ratio of $b$'s to $a$'s.

Finally we note that the image of the map $\F_2\to X$ under the twisted action is not quasi-convex,
even though it is a QIE.  To see that the $\F_2$-orbit of a point
is not quasi-convex under $\ast$, observe that the geodesic $[b^n\ast x_0,ab^n\ast x_0]$ strays farther and farther from $x_0$ as $n$
gets large.  But it is a QIE because
\[
	l(w)\le d_X(x_0,w\ast x_0)\le\sqrt2l(w)
\]
where $w\in \F_2$ and $l$ denotes wordlength in $\F_2$ with respect to the generating set $\{a,b\}$.
This stands in contrast to the situation for $\delta$-hyperbolic spaces, where the concepts of quasi-convexity and
QIE are equiavlent \cite[Corollary III.$\Gamma$.3.6]{bridson-haefliger}.
\end{example}

\begin{example}[Stretching]
\label{ex:stretching}
Here is a different type of phenomenon.  Let $\Gamma'$ be a homeomorphic copy of $\Gamma$
in which the edges corresponding to $a$ have length 1 (as before) and edges corresponding to $b$ have length 2.  Let
$X'=\Gamma'\times\R$ and $G$ act on $X'$ via the product action.  We will denote this action by $\odot$.
Let \(x_0'\in X'\) be the preferred basepoint \((y_0',0)\) where
$y_0'\in\Gamma'$ is the vertex corresponding to $y_0$.  Since \(\F_2\) acts only in the $\Gamma'$-coordinate, the quasi-isometry
\(\F_2\cdot x_0\to \F_2\odot x_0'\) extends to a map between the equators.
However the quasi-isometry
\[
	(\F_2\times\Z)\cdot x_0\to (\F_2\times\Z)\odot x_0'
\]
does not extend.  In $X$, the sequences \((a^nc^n\cdot x_0)\) and \((a^nb^{n^2}c^{n^2}\cdot x_0)\) both converge to
\(l(a^\infty,1)\).
In $X'$, \((a^nc^n\odot x_0')\) also converges to $l(a^\infty,1)$, but \((a^nb^{n^2}c^{n^2}\odot x_0)\) converges to $l(a^\infty,1/2)$.
With $\zeta=l(a^\infty,1)$, define $\Lambda'(\zeta)$ as before, but replacing $G\ast X$ with $G\odot X'$.  Then again we see
that $\Lambda'(\zeta)$ is an interval in $\partial X'$.
\end{example}

In fact, it follows from Theorem \ref{Thm:CE} that whenever $\F_2\times\Z$ acts geometrically on two CAT(0) spaces $X$ and $X'$ and
$\zeta\in\partial X$, then $\Lambda(\zeta)$ is either a point or an interval.
In the above examples, one moves between the endpoints of this interval by controlling the ratio of $b$'s to $a$'s in the sequence
$(g_n)$.  Our final example shows that moving between the endpoints of this interval need not be so simple in general.

\begin{example}[Diamonds]
\label{ex:diamonds}
Begin with the diamond in $\E^2$ with vertices \(\{(1,0),(0,-1),\allowbreak (-1,0),(0,1)\}\) (that is, the convex hull of these four points).
Let $\overline Q$ be the space formed by gluing opposite vertices: \((0,1)\) to \((0,-1)\) and \((1,0)\) to \((-1,0)\).
Then \(\pi_1(\overline Q)=\F_2\).  Its universal cover $Q$ consists of diamonds glued vertex to vertex with its fundamental group,
$\F_2$, acting geometrically by deck transformations.  The generators of $\F_2$ are represented by the 2 geodesic loops based at
$(0,0)$ of length 2 which pass through the glued points.

We will use the symbol $\diamond$ for our action.
Let \(q_0\in Q\) be a preimage of the origin in $\overline Q$.  In this space, $a$ and $b$ both translate
$q_0$ a distance of 2.  But the geodesic \([q_0,ab\diamond q_0]\) is shorter than expected.
More generally:
\[
	d(q_0,(ab)^nq_0)
=
	(2n-1)\sqrt2+2
\sim
	n2\sqrt2
\]

\begin{figure}[ht!]
\begin{center}

\begin{tikzpicture}
\begin{scope}[scale=1.3]
\foreach \a in {(0,0),(2,0),(2,-2)}
\draw[black, thick] \a+(1,0)--+(0,1)--+(-1,0)--+(0,-1)--cycle;

\foreach \a in {(0,1.5),(2,1.5),(-1.5,0),(0,-1.5),(3.5,0),(3.5,-2)}
\draw[black, thick] \a+(1/2,0)--+(0,1/2)--+(-1/2,0)--+(0,-1/2)--cycle;

\foreach \a in {(3.5,-2.5-1/2)}
\draw[black, thick] \a+(1/2,0)--+(0,1/2)--+(-1/2,0)--+(0,-1/2)--cycle;

\node at (0,0)           {$1$};
\node at (2,0)           {$a$};
\node at (2,-2)          {$ab$};
\node at (0,1.5)         {$\scriptstyle{b^{-1}}$};
\node at (2,1.5)         {$\scriptstyle{ab^{-1}}$};
\node at (-1.5,0)        {$\scriptstyle{a^{-1}}$};
\node at (0,-1.5)        {$\scriptstyle{b}$};
\node at (3.5,0)         {$\scriptstyle{a^2}$};
\node at (3.5,-2)        {$\scriptstyle{aba}$};
\node at (3.5,-2.5-1/2)  {$\scriptstyle{(ab)^2}$};

\end{scope}
\end{tikzpicture}

\end{center}
\caption{$Q$}
\end{figure}

Let $G$ act on \(X''=Q\times\R\) via the corresponding product action, and let \(x_0''=(q_0,0)\).  In $G\cdot X$, the sequences \((a^nc^n\cdot x_0)\),
\((a^nb^{n^2}c^{n^2}\cdot x_0)\), and \((a^n(ab)^{n^2}c^{2n^2}\cdot x_0)\) all converge to $l(a^\infty,1)$.
But in $X''$, the sequences \((a^nc^n\diamond x_0'')\) and \((a^nb^{n^2}c^{n^2}\diamond x_0'')\)
converge to $l(a^\infty,1/2)$ whereas the sequence \((a^n(ab)^{n^2}c^{2n^2}\diamond x_0'')\) converges to $l(a^\infty,1/\sqrt2)$.  Again
\(\Lambda(a^\infty)\subset\partial X''\) is an interval.  But we do not
move between the endpoints by controlling the ratio of $a$'s to $b$'s -- we move by controlling the number of subwords of the form
$ab$ which appear.  In other words, it is by ``shuffling'' the $a$'s and $b$'s that we maximize slope.
\end{example}

\subsection{Examples of Schmears}
\label{examples of schmears}
Assume $G$ acts geometrically on two CAT(0) spaces $X_1$ and $X_2$ with basepoints $x_1\in X_1$ and $x_2\in X_2$.
Take $X=X_1\times X_2$ and $G^\Delta$ to be the ``diagonal subgroup'' described above.
Given a group element $g\in G$, we denote the slope of the line segment $[x_0,g^\Delta x_0]$
in this splitting by $\slopescal(g)$.
Then $\partial X$ is the join of $\partial X_1$ with $\partial X_2$.  $\Lambda=\limset(X,G^\Delta)$ lives in the
interiors of the join arcs.  The schmear maps $\Lambda\to\partial X_i$
come from collapsing the join arcs onto $\partial X_i$.

Interestingly, even though all boundaries of a negatively curved group $G$ are equivalent and the schmears
of these boundaries are typically homeomorphic to $\partial G\times I$ for some interval $I$, these schmears
need not be equivalent themselves.  In fact there need not exist limset maps going in either direction, as
exhibited here:

\begin{example}[Two Schmears for Boundaries of $\F_2$]
\label{ex:schmear}
Using the notation given in Example \ref{ex:stretching} (the stretching example), take \(G=\F_2\), \(X_1=\Gamma\), and \(X_2=\Gamma'\).
Then the schmear of the pair $\partial X_1,\partial X_2$ is $\Lambda_\odot=\partial\F_2\times[1,2]$.
For instance,
\[
	\lim_{n\to\infty}\slopescal_\odot(a^n)=1
\]
whereas
\[
	\lim_{n\to\infty}\slopescal_\odot(a^nb^{n^2})=2,
\]
even though in $\F_2\cup\partial\F_2$, both sequences converge to $a^\infty$.  The fact that $\F_2$ is strongly rigid means that
the maps \(\Lambda_\odot\to\partial X_i\) simply collapse the interval factors.

Now take $G=\F_2$ and $X_1=\Gamma$ but $X_2=Q$ from Example \ref{ex:diamonds} (the diamonds example).
Here the schmear is $\Lambda_\diamond=\partial\F_2\times[1/\sqrt2,1/2]$.  It is homeomorphic to $\Lambda_\odot$,
but not equivalent to it.  In fact there are no limset maps going in either direction.
For, in $ G^\Delta\cup\Lambda_\odot$, the sequences $(a^n)^\Delta$ and $(a^nb^{n^2})^\Delta$ converge
to different points, whereas in $ G^\Delta\cup\Lambda_\diamond$ they converge to the same point:
\[
	\lim_{n\to\infty}\slopescal_\diamond(a^n)=\lim_{n\to\infty}\slopescal_\diamond(a^nb^{n^2})=\frac12.
\]
On the other hand, the pair of sequences $(a^{n+n^2}b^{n^2})^\Delta$ and $(a^n(ab)^{n^2})^\Delta$
converge to different points of $\Lambda_\diamond$; the first has slope $1/2$, the second $1/\sqrt2$.
But they converge to the same point of $\Lambda_\odot$, since they are written using the exact same letters
and shuffling of letters does not change $\slopescal_\odot$.
\end{example}

There is a bound on the Lebesgue convering dimension of a schmear in terms of the covering dimension of a boundary.
Extending work of Bestvina \cite{bestvina-local_homology}, Geoghegan and Ontaneda have proven that the
dimension of a group boundary is one less than the cohomological dimension of the group \cite{geoghegan-ontaneda}.
So if the cohomological dimension of a CAT(0) group $G$ is $d+1$ (so that all of its boundaries have dimension $d$),
then whenever $\Lambda$ is a schmear for a pair of boundaries $\partial X$, $\partial Y$, it lives in the space
$\partial X\times\partial Y\times(0,\infty)$, which has dimension bounded by $2d+1$.

The schmear maps $\phi_i:\Lambda\to\partial X_i$
can be taken to be the projection maps to the respective coordinates.
So point preimages live in subspaces of the form
\begin{align}
	\{\zeta_1\}\times\partial X_2\times(0,\infty)
\label{eqn:ambient}
\end{align}
which has dimension $d+1$.
For $\F_2\times\Z$, the bound on the dimension of the schmear is 3,
and the preimage of every point has dimension no more than 2.
The schmears of the pairs of boundaries for $\F_2\times\Z$ described in the previous
subsection all have dimension 2.
The following schmear has full dimension.

\begin{example}[A 3-Dimensional Schmear for Boundaries of $\F_2\times\Z$]
\label{ex:3 dimensional schmear}
Let $\cdot$ denote the untwisted product action of $\F_2\times\Z$ on $X_1=\Gamma\times\R$
first described in Example \ref{ex:twisting}
with $x_1\in X_1$ the preferred basepoint.
Let also $\diamond$ denote the action of $\F_2$ on $Q$ first described in Example \ref{ex:diamonds}.
Take $X_2=Q\times\R$, $v=(0,0)$, and $x_2=(v,0)$ but with the following action by
$\F_2\times\Z$ obtained by adding a twist to the product action:
\begin{align*}
	a\star(v,t) &= (a\diamond v,t), \\
	b\star(v,t) &= (b\diamond v,t+1), \\
\textrm{and }
	c\star(v,t) &= (v,t+1).
\end{align*}
Here are four sequences which converge to $a^{\infty}\in\partial X_1$ but which ``fan out'' in both the
$\partial X_2$ direction and the $(0,\infty)$ direction of the ambient space containing the schmear.
\[
\begin{array}{|l|l|l|}
\hline
	(g_n) & \lim_{n\to\infty}(g_n\star x_2) & \lim_{n\to\infty}\slopescal(g_n) \\
\hline
\hline
	x_n = a^nb^{-n^2} & l(a^\infty,-\frac12) & \sqrt5 \\ 
\hline
	y_n = a^nb^{n^2} & l(a^\infty,\frac12) & \sqrt5 \\ 
\hline
	x_n' = a^n(ab^{-1})^{n^2} & l\left(a^\infty,-\frac{1}{\sqrt2}\right) & 3 \\ 
\hline
	y_n' = a^n(ab)^{n^2} & l\left(a^\infty,\frac{1}{\sqrt2}\right) & 3 \\ 
\hline
\end{array}
\]
It follows from Theorem \ref{Thm:CE} that the preimage of $a^{\infty}$ under the schmear map
$\Lambda\to\partial X_1$ contains the convex hull of these four points and is a 2-disk $D^2$.
The entire schmear is homeomorphic to the suspension of $C\times D^2$
\end{example}

\section{Preliminaries}
A \textit{compactification} of a Hausdorff space $X$ is a topological embedding $X\incl\overline X$
into a compact Hausdorff space whose image is dense.  We will refer to $\overline X-X$ as a \textit{limset} for $X$.
Let $\Lambda,\Lambda'$ be limsets of a noncompact space $X$.  We write $\Lambda\ge\Lambda'$
if the identity function $X\to X$ extends continuously to a map $X\cup\Lambda\to X\cup\Lambda'$.
The restriction of this map to $\Lambda\to\Lambda'$ is called a \textit{limset map}.
As mentioned above, if this map is a homeomorphism, then $\Lambda$ and $\Lambda'$ are considered equivalent.
Let $X$ be a noncompact space and $\eL(X)$ denote the collection of equivalence classes of limsets.

\begin{rem}
The reader should note that homeomorphic or even isometric limsets may represent different elements of $\eL(X)$.
When we refer to $\Lambda$ as an element of $\eL(X)$, a particular compactification
(informally, the way $\Lambda$ is glued to $X$) is assumed.
\end{rem}

The following statements are easily verified.

\begin{lemma}
\label{le:characterization of existence of limset maps}
Let $X$ be a noncompact Hausdorff space $X$.
For $\Lambda,\Lambda'\in\eL(X)$, $\Lambda\ge\Lambda'$ iff the following rule holds:
Given any sequence $(x_n)\subset X$ converging to a point of $\Lambda$, $(x_n)$ also converges
to a point of $\Lambda'$.
\end{lemma}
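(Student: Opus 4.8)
The plan is to prove the two implications separately. The forward direction is essentially a direct application of continuity, while the reverse direction I would obtain by comparing the two compactifications inside their product and reducing the whole problem to the injectivity of a single projection map.

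For the forward implication, suppose $\Lambda\ge\Lambda'$, witnessed by a continuous map $f\colon X\cup\Lambda\to X\cup\Lambda'$ with $f|_X=\id_X$. Given a sequence $(x_n)\subset X$ with $x_n\to\xi\in\Lambda$, continuity yields $x_n=f(x_n)\to f(\xi)$ in $X\cup\Lambda'$, so it remains only to check that $f(\xi)\in\Lambda'$ rather than in $X$. If instead $f(\xi)=x_0\in X$, then because $X$ carries the same subspace topology in either compactification, $x_n\to x_0$ in $X$ and hence $x_n\to x_0$ in $X\cup\Lambda$; since $X\cup\Lambda$ is Hausdorff this forces $\xi=x_0$, contradicting $\xi\in\Lambda$. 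Thus $(x_n)$ converges to a point of $\Lambda'$, as desired, and no metrizability is needed here.

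For the reverse implication, I would form the diagonal $\Delta=\{(x,x):x\in X\}\subset(X\cup\Lambda)\times(X\cup\Lambda')$ and set $W=\overline{\Delta}$, the closure in the product. Then $W$ is compact Hausdorff, the map $x\mapsto(x,x)$ embeds $X$ as a dense subset, and the two coordinate projections restrict to continuous maps $\pi_1\colon W\to X\cup\Lambda$ and $\pi_2\colon W\to X\cup\Lambda'$, each of which is the identity on $X$. Because $\pi_1$ is a continuous surjection from a compact space onto a Hausdorff space, it is a homeomorphism as soon as it is injective; and once $\pi_1$ is a homeomorphism, $f:=\pi_2\circ\pi_1^{-1}$ is exactly the continuous extension of the identity witnessing $\Lambda\ge\Lambda'$. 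So the entire reverse direction reduces to proving that $\pi_1$ is injective, i.e. that for each $p\in X\cup\Lambda$ there is a unique $q$ with $(p,q)\in W$. Over $p\in X$ this is automatic: a net in $\Delta$ limiting to $(p,q)$ has first coordinate tending to $p\in X$, hence tends to $p$ in $X$ and therefore in $X\cup\Lambda'$, forcing $q=p$ by Hausdorffness.

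The substance is the case $p\in\Lambda$. Here I would take $(p,q),(p,q')\in W$, choose sequences $(x_n),(y_n)\subset X$ with $x_n\to p$, $y_n\to p$ in $X\cup\Lambda$ and $x_n\to q$, $y_n\to q'$ in $X\cup\Lambda'$, and interleave them into a single sequence $x_1,y_1,x_2,y_2,\dots$ which still converges to $p\in\Lambda$ in $X\cup\Lambda$. The hypothesis then says this interleaved sequence converges to a point of $\Lambda'$ in $X\cup\Lambda'$; comparing with its two subsequences and using uniqueness of limits in the Hausdorff space $X\cup\Lambda'$ gives $q=q'$, establishing injectivity. I expect the only real obstacle to be the passage from nets to sequences: extracting the sequences $(x_n),(y_n)$ from the defining nets requires first countability (equivalently, metrizability) of the compactifications, which holds in all cases of interest in this paper. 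In full generality one should either assume first countability or restate the convergence hypothesis in terms of nets, after which the interleaving argument goes through verbatim.
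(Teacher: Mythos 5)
Your proof is correct, and it is worth noting that the paper itself offers no argument here: the lemma appears under the heading ``The following statements are easily verified,'' so there is no proof to match yours against. Your forward direction is the standard continuity argument anyone would write. For the reverse direction, the argument the authors presumably had in mind is the direct one: for $\xi\in\Lambda$ pick a sequence $x_n\to\xi$, define $f(\xi)$ to be its limit in $X\cup\Lambda'$ (well-defined by the same interleaving trick you use), and then verify continuity of $f$ by a diagonal/metric argument. Your route through $W=\overline{\Delta}\subset(X\cup\Lambda)\times(X\cup\Lambda')$ is a genuinely different and arguably cleaner packaging: once $\pi_1$ is shown to be injective, continuity of $f=\pi_2\circ\pi_1^{-1}$ comes for free from the fact that a continuous bijection from a compact space onto a Hausdorff space is a homeomorphism, so no separate continuity verification is needed.

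Your closing caveat about first countability is not pedantry; it is a genuine correction to the statement as written. For an arbitrary noncompact Hausdorff $X$ the sequential criterion is too weak: take $X=\mathbb{N}\sqcup\mathbb{N}$ discrete, let $\Lambda'$ be the remainder of $\beta X=\beta\mathbb{N}\sqcup\beta\mathbb{N}$, and let $\Lambda$ be the remainder of the quotient of $\beta X$ obtained by identifying the two copies of $\beta\mathbb{N}-\mathbb{N}$ pointwise (a compact Hausdorff compactification of $X$). Since $\beta\mathbb{N}$ admits no nontrivial convergent sequences, no sequence in $X$ converges to any point of $\Lambda$, so the sequential rule holds vacuously; yet the identity cannot extend to a map $X\cup\Lambda\to X\cup\Lambda'$, since such an extension would invert the (non-injective) quotient map. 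So some hypothesis such as first countability, or a restatement in terms of nets, is required exactly as you say. Since every compactification arising in this paper (boundaries of proper CAT(0) spaces, limsets of orbits, and their subspaces) is metrizable, your proof covers all cases in which the lemma is actually used.
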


\begin{lemma}
Let $X$ be a noncompact Hausdorff space.  Then
\begin{enumerate}
\item Limset maps between elements of $\eL(X)$ are unique and surjective (when they exist)\\
\item $\ge$ is a partial ordering on $\eL(X)$\\
\item Every diagram of limset maps commutes\\
\end{enumerate}
\end{lemma}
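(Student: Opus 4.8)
The plan is to reduce all three parts to two elementary facts about each compactification $X\cup\Lambda$: (a) the continuous image of a compact space in a Hausdorff space is closed, and (b) two continuous maps into a Hausdorff space that agree on a dense set agree everywhere. Recall that by the definition of a compactification, $X$ is dense in each $X\cup\Lambda$ and the inclusion $X\incl X\cup\Lambda$ is an embedding into a compact Hausdorff space; these are the only structural inputs I will use.

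For Part (1) I would first record the one point that needs a moment's care: if $F\colon X\cup\Lambda\to X\cup\Lambda'$ is a continuous extension of $\id_X$, then $F(\Lambda)\subseteq\Lambda'$, so that restricting $F$ genuinely gives a map $\Lambda\to\Lambda'$. This follows from a net argument: a point $\xi\in\Lambda$ is a limit of a net in $X$, so if $F(\xi)$ were a point $x_0\in X$ then, since $F$ fixes $X$, that same net would converge both to $\xi$ and to $x_0$ in $X\cup\Lambda$, contradicting Hausdorffness. Uniqueness is then immediate from principle (b): any two continuous extensions of $\id_X$ agree on the dense set $X$, hence everywhere, so the induced limset map is unique. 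Surjectivity comes from principle (a): $F(X\cup\Lambda)$ is compact, hence closed in the Hausdorff space $X\cup\Lambda'$, and it contains the dense set $F(X)=X$, so $F$ is onto; since $F(X)=X$ and $F(\Lambda)\subseteq\Lambda'$, the restriction $F|_\Lambda\colon\Lambda\to\Lambda'$ must already carry $\Lambda$ onto all of $\Lambda'$.

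For Part (2), reflexivity is witnessed by $\id_{X\cup\Lambda}$, and transitivity holds because the composite of two extensions of $\id_X$ is again such an extension, restricting appropriately to the limsets. For antisymmetry, suppose $\Lambda\ge\Lambda'$ and $\Lambda'\ge\Lambda$, giving extensions $F\colon X\cup\Lambda\to X\cup\Lambda'$ and $G\colon X\cup\Lambda'\to X\cup\Lambda$. Then $G\circ F$ is a continuous extension of $\id_X$, so by the uniqueness from Part (1) it equals $\id_{X\cup\Lambda}$; likewise $F\circ G=\id_{X\cup\Lambda'}$. Hence $F$ is a homeomorphism, its restriction $F|_\Lambda\colon\Lambda\to\Lambda'$ is a homeomorphic limset map, and $\Lambda,\Lambda'$ represent the same class in $\eL(X)$. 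The companion observation (a continuous bijection from a compact space to a Hausdorff space is a homeomorphism) shows conversely that $\Lambda\sim\Lambda'$ forces both $\Lambda\ge\Lambda'$ and $\Lambda'\ge\Lambda$; together with transitivity this makes $\ge$ well defined on classes and thus a genuine partial order on $\eL(X)$.

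Part (3) is then formal: any directed path of limset maps from $\Lambda_i$ to $\Lambda_j$ composes to an extension of $\id_X$, hence to a limset map $\Lambda_i\to\Lambda_j$, and by the uniqueness of Part (1) there is at most one such map. Therefore any two paths with the same endpoints induce the same composite, and the diagram commutes. I expect the only genuinely delicate step to be the verification that extensions restrict to maps $\Lambda\to\Lambda'$ in Part (1); once that and principles (a) and (b) are in place, everything else is bookkeeping.
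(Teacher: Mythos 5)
Your proof is correct. The paper offers no argument here---the lemma is introduced with ``The following statements are easily verified''---and your verification via the standard point-set facts (density of $X$ in each compactification, agreement of continuous maps into a Hausdorff space on a dense set, compactness of continuous images) is precisely the intended routine check, including the one genuinely necessary observation that an extension of $\id_X$ automatically carries $\Lambda$ into $\Lambda'$, and the ``companion observation'' needed to see that $\ge$ descends to equivalence classes. One small imprecision worth patching: in your net argument the convergence $x_\alpha = F(x_\alpha)\to F(\xi)=x_0$ a priori takes place in $X\cup\Lambda'$, not in $X\cup\Lambda$ as you assert; since all terms and the limit $x_0$ lie in $X$, which carries the same subspace topology from both compactifications, the convergence transfers first to $X$ and then (by continuity of the inclusion) to $X\cup\Lambda$, where Hausdorffness yields the contradiction with $x_\alpha\to\xi$.
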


\begin{rem}
If $X$ is a noncompact Hausdorff space, then $\eL(X)$ has a unique maximum,
namely $\beta X-X$ where $\beta X$ is the Stone-\v{C}ech compactification of $X$.
\end{rem}

\begin{lemma}
\label{prop:rationalmaps}
Let \(Y\subset X\) be a subspace, $\Lambda\ge\Lambda'$ be limsets of $X$.
Let $\Lambda(Y)=\overline Y\cap\Lambda$ and $\Lambda'(Y)=\overline Y\cap\Lambda'$.
Then $\Lambda(Y)\ge\Lambda'(Y)$ via the restriction of the limset map
$\Lambda\to\Lambda'$ to $\Lambda(Y)$.
\end{lemma}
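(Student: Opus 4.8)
The plan is to show that the extension map already provided by the hypothesis $\Lambda\ge\Lambda'$ restricts to the desired limset map, so that the lemma reduces to the elementary fact that continuous maps respect closures. Let $\pi\colon X\cup\Lambda\to X\cup\Lambda'$ be the continuous extension of $\id_X$ guaranteed by $\Lambda\ge\Lambda'$; its restriction $\pi|_\Lambda\colon\Lambda\to\Lambda'$ is the limset map, so in particular $\pi(\Lambda)\subseteq\Lambda'$.

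First I would pin down exactly which spaces are being compactified. Writing $\widehat Y=\overline Y\cap X$ for the closure of $Y$ in $X$, the set $C=\overline Y$ (closure taken in $X\cup\Lambda$) is a closed, hence compact, Hausdorff subspace in which $Y$, and therefore $\widehat Y$, is dense; thus $C$ is a compactification of $\widehat Y$ with limset $C-\widehat Y=C\cap\Lambda=\Lambda(Y)$. The same reasoning inside $X\cup\Lambda'$ exhibits $C'=\overline Y$ (closure in $X\cup\Lambda'$) as a compactification of $\widehat Y$ with limset $\Lambda'(Y)$. Hence $\Lambda(Y)$ and $\Lambda'(Y)$ are genuine elements of $\eL(\widehat Y)$, and the inequality to be proved lives in $\eL(\widehat Y)$.

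The heart of the argument is then the standard inclusions $\pi(\overline A)\subseteq\overline{\pi(A)}$ and $\pi(A\cap B)\subseteq\pi(A)\cap\pi(B)$. Since $\pi$ fixes $X$ pointwise we have $\pi(Y)=Y$, so $\pi(C)=\pi(\overline Y)\subseteq\overline{\pi(Y)}=C'$, and $\pi$ restricts to a continuous map $\pi_Y\colon C\to C'$ which still fixes $\widehat Y$ pointwise. Moreover $\pi(\Lambda(Y))=\pi(C\cap\Lambda)\subseteq\pi(C)\cap\pi(\Lambda)\subseteq C'\cap\Lambda'=\Lambda'(Y)$, so $\pi_Y$ carries $\Lambda(Y)$ into $\Lambda'(Y)$. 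This continuous extension of $\id_{\widehat Y}$ is precisely the data witnessing $\Lambda(Y)\ge\Lambda'(Y)$, and by construction its restriction to $\Lambda(Y)$ is the restriction of $\pi|_\Lambda$, i.e.\ of the original limset map.

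There is no serious analytic obstacle here; the content is entirely the bookkeeping of closures. The one point that must be handled with care, and the only place where the statement could be misread, is the identification of $\Lambda(Y)$ as an honest limset: the space being compactified is the closure $\widehat Y$ of $Y$ in $X$ rather than $Y$ itself, and since $Y$ and $\widehat Y$ have the same closures in $X\cup\Lambda$ and in $X\cup\Lambda'$ they yield the same $\Lambda(Y)$ and $\Lambda'(Y)$, so nothing is lost. Once that is fixed the result follows immediately. Alternatively one could invoke the sequential criterion of Lemma \ref{le:characterization of existence of limset maps}: a sequence in $\widehat Y$ converging to a point of $\Lambda(Y)\subseteq\Lambda$ converges in $X\cup\Lambda'$ to a point of $\Lambda'$ lying in $\overline Y$, hence in $\Lambda'(Y)$. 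I prefer the closure argument, since it avoids any first-countability assumption.
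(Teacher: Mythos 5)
Your proof is correct and is essentially the argument the paper intends: this lemma is one of the statements the paper leaves unproved as ``easily verified,'' and your verification --- restrict the continuous extension $\pi$ of $\id_X$ to $C=\overline{Y}$ and use $\pi(\overline{A})\subseteq\overline{\pi(A)}$ together with $\pi(\Lambda)\subseteq\Lambda'$ to land in $C'\cap\Lambda'=\Lambda'(Y)$ --- is exactly that verification. Your added refinement, that the space being compactified is $\widehat{Y}=\overline{Y}\cap X$ rather than $Y$ itself (so that the limset is precisely $\Lambda(Y)$ even when $Y$ is not closed in $X$), correctly settles a point the paper's statement glosses over.
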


\subsection{Notes on CAT(0) Spaces and Groups}
Let $(X,d)$ be a CAT(0) space.
Given two geodesics $\alpha,\beta:[0,1]\to X$ parameterized to have constant speed, the distance metric
satisfies
\[
	d\bigl(\alpha(t),\beta(t)\bigr)
		\le
	td\bigl(\alpha(1),\beta(1)\bigr)
		+
	(1-t)d\bigl(\alpha(0),\beta(0)\bigr).
\]
for all $t\in[0,1]$.
This property is known as \textit{convexity of the metric}.

For the most part we will consider the visual boundary as the
set of all geodesic rays emanating from a common basepoint $x_0\in X$.  With the cone topology,
$\overline X=X\cup\partial X$ is a compactification of $X$.  $\overline X$ can be identified with the
space of geodesic segments and rays emanating from $x_0$ parameterized to have unit speed.
Then the topology on $\overline X$ is the same as the compact-open topology on this function space.

Here is how we can tell if a sequence of points $(x_n)$ in $\overline X$ converges to a point $\zeta\in\partial X$.
Let $\gamma_n$ and $\alpha$ be the geodesics based at $x_0$ determining $x_n$ and $\zeta$ respectively, and
$\delta>0$ be arbitrary.  Then $x_n\to\zeta$ iff for any $K\ge 0$, there is an $N\ge 0$ such that for all
$n\ge N$, either $x_n\in\partial X$ or $d(x_0,x_n)\ge N$, and $d(\alpha(K),\gamma_n(K))\le\delta$.
Convexity of the metric then guarantees that $\gamma_n\to\alpha$ uniformly on compact subsets of $\R$.

Let $G$ be a group acting on a proper CAT(0) space $X$ by isometries.
The \textit{translation length} of $g\in G$ is defined by
\( |g|=\inf_{x\in X}d(x,gx) \).
If this value is realized then we call $g$ \textit{semi-simple}.
Proposition II.6.10 in \cite{bridson-haefliger} tells us that whenever a group acts
geometrically on a proper CAT(0) space $X$, all of its elements are semi-simple.
The set of all points on which this minimum is attained is called the \textit{minset} of $g$:
\[
	\Min_X(g)=
	\Min(g)=\bigl\{x\in X\big|d(x,gx)=|g|\bigr\}.
\]
If $|g|>0$, then $g$ is called \textit{hyperbolic}.  Whenever $x$ is in the minset of
a hyperbolic element $g$, then there is a geodesic line $A$ passing through $x$ which is $g$
invariant.  This line is called an \textit{axis} of $g$.  For a subgroup $H\le G$ of isometries
of a CAT(0) space $X$ the \textit{minset of $H$} is defined as
\[
	\Min_X(H)=\Min(H)=\bigcap_{g\in H}\Min(g).
\]
By \cite[Proposition II.6.2]{bridson-haefliger}, minsets of isometries and minsets of groups of isometries
are always closed and convex.  If the ambient space is complete (as we will always assume), then
minsets are themselves complete CAT(0) spaces.

A key ingredient in Theorem \ref{Thm:CE} is the Flat Torus Theorem \cite[Theorem II.7.1]{bridson-haefliger}.
Recall that an isometry $g$ of a product $X_1\times X_2$ is said to \textit{respect the product decomposition}
if $g$ can be written in coordinates as $g=(g_1,g_2)$ where $g_1$ and $g_2$ are isometries of $X_1$ and $X_2$.

\begin{Theorem}[Flat Torus Theorem]
Let $A=\Z^d$ act properly discontinuously by semi-simple isometries on a proper CAT(0) space $X$.
Then:\\
\begin{enumerate}
\item $\Min(A)$ is nonempty and splits as a product $Y\times\E^d$.\\
\item Every $c\in A$ leaves $\Min( A)$ invariant and respects the product decomposition;
$c$ acts as the identity on $Y$ and as a translation on $\E^d$.\\
\item The quotient of each flat $\{y\}\times\E^d$ by the action of $ A$ is an $n$-torus.\\
\item If an isometry of $X$ normalizes $ A$, then it leaves $\Min(A)$ invariant and preserves
the product decomposition.\\
\end{enumerate}
\end{Theorem}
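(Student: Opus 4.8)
The plan is to prove the theorem by induction on the rank $d$, using the structure theorem for a single hyperbolic isometry as the engine. First I would record the base mechanism. Because $A\cong\Z^d$ acts properly discontinuously and is torsion-free, every nontrivial $c\in A$ has infinite order and cannot be elliptic (an infinite group fixing a point would contradict proper discontinuity); being semi-simple, $c$ is therefore hyperbolic with $|c|>0$. The single-isometry theory --- the Flat Strip Theorem together with the fact that $\Min(c)$ is exactly the union of axes of $c$ --- then shows that $\Min(c)$ is nonempty, closed, convex, and splits isometrically as $\Min(c)=Y\times\R$, with $c$ acting as $\id$ on $Y$ and as translation by $|c|$ along $\R$. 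This settles the case $d=1$ with $\E^1=\R$.

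For the inductive step I would fix a direct-sum decomposition $A=A'\oplus\langle c\rangle$ with $A'\cong\Z^{d-1}$, and assume the theorem for $A'$, so that $\Min(A')=Y'\times\E^{d-1}$ with $A'$ fixing $Y'$ and translating $\E^{d-1}$. The crucial structural observation is that commuting isometries preserve one another's minsets and splittings: since $c$ commutes with every element of $A'$, it normalizes $A'$, hence carries axes to axes and preserves $\Min(A')$ together with its product decomposition, acting as $(\psi,\ \text{translation})$ on $Y'\times\E^{d-1}$ for some isometry $\psi$ of $Y'$. Now $Y'$ is a closed convex subset of the proper space $X$, hence itself a complete proper CAT(0) space, and I would apply the single-isometry theorem to $\psi$ on $Y'$ to get $\Min_{Y'}(\psi)=Y''\times\R$. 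Since $\Min(A)=\Min(A')\cap\Min(c)$ and, inside $Y'\times\E^{d-1}$, the minimizers of $c$'s displacement are precisely $\Min_{Y'}(\psi)\times\E^{d-1}$, this assembles to $\Min(A)=Y''\times\R\times\E^{d-1}=Y''\times\E^{d}$, with $A$ fixing $Y''$ and translating $\E^{d}$. This proves (1) and (2).

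The hard part --- and where I expect the real work to lie --- is the bookkeeping hidden in that last paragraph, namely verifying that $\psi$ is a \emph{semi-simple} isometry of $Y'$ so that the single-isometry theorem actually applies, and with it the nonemptiness of $\Min(A)$. The clean idea is the $\ell^2$-product displacement identity $d\bigl(c(y,w),(y,w)\bigr)^2=d_{Y'}\bigl(\psi(y),y\bigr)^2+\tau^2$, which shows that any point realizing the displacement of $c$ on $Y'\times\E^{d-1}$ projects to a point realizing the displacement of $\psi$ on $Y'$; the obstacle is producing such a minimizer on the invariant convex set, which is exactly the statement that commuting semi-simple isometries have intersecting minsets. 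I would also need that the induced $A'$-action on $Y'$, and the factor map $\psi$, interact well enough to keep the action properly discontinuous; here I would lean on proper discontinuity of $A$ on $X$ to rule out the degenerate case in which some nontrivial element acts trivially on the base factor while translating a $\R$-direction --- for such an element would force a rank-two subgroup to act by translations on a single $\R$, which no proper $\Z^d$-action permits.

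Finally, (3) follows once (1) and (2) are in hand: $A$ acts on each flat $\{y\}\times\E^{d}$ by translations, and this group is discrete by proper discontinuity and of rank $d$ equal to the dimension (the translation vectors are linearly independent because $A\cong\Z^d$ injects), hence a cocompact lattice, so the quotient is the flat $d$-torus $\E^{d}/\Z^{d}$. For (4), if an isometry $\phi$ normalizes $A$, then for each $g\in A$ it carries $\Min(g)$ isometrically onto $\Min(\phi g\phi^{-1})$; since $\phi g\phi^{-1}\in A$, the map $\phi$ permutes the family $\{\Min(g)\}_{g\in A}$ and therefore preserves their intersection $\Min(A)$. Because the Euclidean factor is canonically the set of translation directions of $A$ --- the directions along which elements of $A$ displace points minimally --- and $\phi$ conjugates $A$ into itself, $\phi$ must carry flats to flats and hence preserve the product decomposition.
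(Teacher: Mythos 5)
The paper itself does not prove this statement; it quotes the Flat Torus Theorem verbatim from Bridson--Haefliger \cite[Theorem II.7.1]{bridson-haefliger}, so your attempt can only be measured against the standard proof. Your architecture is exactly that proof: induction on rank, base case from the decomposition $\Min(c)=Y\times\R$ of a single hyperbolic isometry, inductive step by writing $A=A'\oplus\langle c\rangle$, splitting $\Min(A')=Y'\times\E^{d-1}$, and letting the centralizing element $c$ act as $(\psi,\mathrm{translation})$ via part (4) of the inductive hypothesis. The base case, the use of proper discontinuity to rule out elliptic elements, part (3), and the orbit-hull characterization of flats behind part (4) are all sound (though your phrasing of the non-ellipticity of $\psi$ in terms of ``a rank-two subgroup acting on a single $\R$'' is really the $d=2$ case; the general statement is that $\psi$ fixing $y\in Y'$ would make $\Z^d$ inject as a discrete group of translations of the flat $\{y\}\times\E^{d-1}$, which is impossible).

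The genuine gap is the one you flag and then leave unresolved: that $\psi$ is a \emph{semi-simple} isometry of $Y'$, equivalently that $\Min_X(c)\cap\Min(A')\neq\emptyset$. This is not bookkeeping --- nonemptiness of $\Min(A)$, the splitting, and even your final identification $\Min(A)=\Min(A')\cap\Min(c)=Y''\times\E^{d}$ (which requires \emph{every} $a\in A$, not just $c$ and elements of $A'$, to attain its minimal displacement on that set) all rest on it, so as written the induction does not close. The missing ingredient is short and standard: if $C$ is a nonempty complete convex subset of a CAT(0) space invariant under an isometry $c$, then the nearest-point projection $\proj_C$ is $1$-Lipschitz and commutes with $c$; hence for $x^*\in\Min_X(c)$ and $p=\proj_C(x^*)$ one gets $d(cp,p)=d(\proj_C(cx^*),\proj_C(x^*))\le d(cx^*,x^*)=|c|$, so $p\in\Min_X(c)\cap C$. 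Applied with $C=\Min(A')$ this proves your ``commuting semi-simple isometries have intersecting minsets,'' shows the restriction of a semi-simple isometry to an invariant closed convex set is semi-simple with the same translation length, and (applied with $C=Y''\times\E^{d}$, on which each $a\in A$ acts as a translation with constant displacement) completes the identification of $\Min(A)$. With that lemma inserted, your proof is essentially the textbook one; without it, the central step is an assertion rather than an argument.
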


Another fact about minsets which will come in handy is \cite[Proposition II.6.9]{bridson-haefliger}:

\begin{prop}
\label{prop:minsetproduct}
Let $X_1$ and $X_2$ be proper CAT(0) spaces and $g_i$ be isometries
of $X_i$, and consider the isometry $g=(g_1,g_2)$ of $X=X_1\times X_2$.
Then
\[
	\Min_Xg=\Min_{X_1}(g_1)\times\Min_{X_2}(g_2)
\]
In particular, $g$ is semi-simple iff both $g_1$ and $g_2$ are.
\end{prop}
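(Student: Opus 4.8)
The plan is to exploit the way the $l_2$-metric on the product separates into its two factors. Write $d_g(x)=d(x,gx)$ for the displacement function of $g$, and similarly $d_{g_i}(x_i)=d_i(x_i,g_ix_i)$ for $i=1,2$. Since $g=(g_1,g_2)$ respects the product decomposition, it acts coordinatewise, so $g(x_1,x_2)=(g_1x_1,g_2x_2)$, and the definition $d=\sqrt{d_1^2+d_2^2}$ gives immediately
\[
	d_g(x_1,x_2)^2=d_{g_1}(x_1)^2+d_{g_2}(x_2)^2.
\]

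First I would observe that the right-hand side is a sum of two nonnegative functions depending on disjoint sets of variables, so minimizing over $(x_1,x_2)$ amounts to minimizing each summand independently. Taking infima yields
\[
	|g|^2=\inf_{(x_1,x_2)}d_g(x_1,x_2)^2=\inf_{x_1}d_{g_1}(x_1)^2+\inf_{x_2}d_{g_2}(x_2)^2=|g_1|^2+|g_2|^2.
\]
Next I would pin down the minset. From the displayed factorization, a point $(x_1,x_2)$ realizes the infimum $|g|$ exactly when $d_{g_1}(x_1)^2+d_{g_2}(x_2)^2$ equals $|g_1|^2+|g_2|^2$. Since each summand is bounded below by the corresponding $|g_i|^2$, equality forces $d_{g_1}(x_1)=|g_1|$ and $d_{g_2}(x_2)=|g_2|$ simultaneously; that is, $x_1\in\Min(g_1)$ and $x_2\in\Min(g_2)$. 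This gives $\Min_X(g)=\Min_{X_1}(g_1)\times\Min_{X_2}(g_2)$, as claimed.

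Finally, the semi-simplicity statement follows as a formal consequence: $g$ is semi-simple iff $\Min_X(g)\neq\emptyset$, and by the product description this nonemptiness holds iff both $\Min(g_1)$ and $\Min(g_2)$ are nonempty, i.e.\ iff both $g_1$ and $g_2$ are semi-simple. I do not expect any serious obstacle here; the entire argument rests on the coordinatewise separation of the $l_2$-displacement function, and the only point requiring a moment's care is the elementary fact that an infimum of a sum of functions in independent variables is realized precisely when each summand is separately minimized.
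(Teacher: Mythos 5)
Your proof is correct. The paper offers no proof of its own here---the statement is quoted directly as Proposition II.6.9 of Bridson--Haefliger---and your argument, resting on the Pythagorean splitting $d(x,gx)^2=d_1(x_1,g_1x_1)^2+d_2(x_2,g_2x_2)^2$ together with the observation that an infimum of a sum of functions in independent variables is attained exactly when each summand attains its own infimum, is precisely the standard derivation of that cited fact; it also correctly covers the degenerate case where some $\Min_{X_i}(g_i)$ is empty, since then both sides of the product formula are empty and the semi-simplicity equivalence still follows.
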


\subsection{A Category of CAT(0) Limsets}
If a group $G$ acts by isometries on a CAT(0) space $X$ in such a way that $G\to X$ is a QIE,
we will say that $G$ acts pseudo-geometrically on $X$.
The key way in which psuedo-geometric actions arise in this paper
is as follows: Whenever a group $G$ acts geometrically on a CAT(0) space $X$ and $H\le G$ is a
quasi-isometrically embedded subgroup,
then the action of $H$ as a subgroup is pseudo-geometric.

We denote by $\eLcat(G)$ the subcollection of $\eL(G)$ consisting of limsets which come from pseudo-geometric actions.
Formally,
\[
	\eLcat(G)
=
	\bigl\{
		\limset(X,G)
	\big|
		G\textrm{ acts pseudo-geometrically on a CAT(0) space }X
	\bigr\}
		/\sim
\]
where $\sim$ denotes the equivalence relation described above.

\begin{rem}
By Theorem \ref{Thm:Schmear}, $\eLcat(G)$ is a directed poset which means that it has the structure of an inverse system.
\end{rem}

\begin{example}[Limset Maps for $\Z^d$]
Whenever $\Z^d$ acts properly discontinuously by semi-simple isometries on a CAT(0) space $X$,
the Flat Torus Theorem guarantees that it acts cocompactly on
the convex hull of the orbit of a point.  The point can be chosen so that this convex hull is $\E^d$,
and $\limset(X,\Z^d)=\partial\E^d=S^{d-1}$.  It follows that all limsets in $\eLcat(\Z^d)$ which come
from semi-simple isometries are equivalent.
\end{example}

This line of reasoning also gives

\begin{corollary}[Corollary to the Flat Torus Theorem]
\label{co:FTT}
Let $\Z^d$ act by semi-simple isometries on two CAT(0) spaces $X_1$ and $X_2$.
Then there is a limset map $\limset(X_1,\Z^d)\to\limset(X_2,\Z^d)$ which is a homeomorphism.
\end{corollary}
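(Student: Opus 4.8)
The plan is to reduce the statement to the previous example, $\Z^d$ acting by semi-simple isometries, together with the splitting furnished by the Flat Torus Theorem. The hypothesis is that $\Z^d$ acts by semi-simple isometries on both $X_1$ and $X_2$. First I would apply the Flat Torus Theorem to each action: it gives $\Min(\Z^d)\subseteq X_i$ nonempty and splitting as $Y_i\times\E^d$, with every element of $\Z^d$ acting trivially on $Y_i$ and by a translation on the $\E^d$ factor. In particular $\Z^d$ acts cocompactly on each flat $\{y\}\times\E^d$, and by choosing the orbit of a point inside such a flat, the convex hull of the orbit is a copy of $\E^d$ on which $\Z^d$ acts geometrically. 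Consequently $\limset(X_i,\Z^d)=\partial\E^d=S^{d-1}$ for each $i$, exactly as recorded in the Example on limset maps for $\Z^d$.

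The core of the argument is then to produce the limset map itself, not merely to observe that both limsets are abstractly homeomorphic to $S^{d-1}$; recall from the Remark following the definition of $\eL(X)$ that homeomorphic limsets can still be inequivalent, so I must exhibit a genuine limset map. By Lemma \ref{le:characterization of existence of limset maps}, it suffices to show that whenever a sequence $(g_n)\subset\Z^d$ has $g_n\cdot x_0^{(1)}$ converging to a point of $\partial\E^d\subseteq\partial X_1$, the sequence $g_n\cdot x_0^{(2)}$ also converges in $X_2\cup\partial X_2$. Here I would identify the orbit map $\Z^d\to X_i$ with the translation action on $\E^d$, under which a sequence $g_n$ converges to a boundary point precisely when the associated translation vectors, which are independent of $i$, escape to infinity in an asymptotically fixed direction $\unitvec\in S^{d-1}$. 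Since the translation vector of $g_n$ in $\E^d$ is determined by $g_n\in\Z^d$ alone and is the \emph{same} lattice of vectors for both actions (the $\E^d$-translation depends only on the group element via the Flat Torus Theorem, not on $Y_i$), convergence of the orbit in $X_1$ forces convergence of the orbit in $X_2$ to the corresponding point of $S^{d-1}$. This gives $\limset(X_1,\Z^d)\ge\limset(X_2,\Z^d)$, and by symmetry the reverse inequality; then part (2) of the Lemma on properties of $\ge$ (that $\ge$ is a partial order, hence antisymmetric) yields that the limset map is a homeomorphism.

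I expect the main obstacle to be the passage from the abstract orbit to a clean parameterization by $S^{d-1}$ that is manifestly the \emph{same} for both spaces. The subtlety is that $\partial\E^d$ sits inside the larger boundaries $\partial X_i=\partial(Y_i\times\E^d)$, which by the join description are $\partial Y_i\ast S^{d-1}$; I must verify that the $\Z^d$-orbit limit points all land in the preferred copy of $S^{d-1}$ (all rays of slope $\infty$ toward the $\E^d$ factor) and never stray into the $\partial Y_i$ directions. This follows because the orbit lies in a single flat $\{y\}\times\E^d$ and hence its limset is contained in the boundary of that flat, but it should be stated carefully so that the limset map is visibly induced by the common translation structure on $\E^d$ rather than by any incidental homeomorphism of the spheres. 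Once the orbit is pinned inside the flat, the identification of both limsets with the \emph{same} $S^{d-1}$ via the shared translation vectors makes the limset map and its inverse transparent.
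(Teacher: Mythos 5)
Your overall strategy --- reduce to the Euclidean factor via the Flat Torus Theorem, verify the sequence criterion of Lemma \ref{le:characterization of existence of limset maps}, and upgrade to a homeomorphism using antisymmetry of $\ge$ --- is the same as the paper's (the paper treats the corollary as immediate from its Example on limset maps for $\Z^d$). However, your key step rests on a false claim: the translation vectors are \emph{not} ``the same lattice of vectors for both actions.'' The Flat Torus Theorem says each $c\in\Z^d$ acts on the Euclidean factor of $\Min(\Z^d)\subseteq X_i$ by a translation, but the translation vector depends on the action, not only on the group element; what you actually get are two injective homomorphisms $\Vtrans_i:\Z^d\to\R^d$ whose images $L_1$, $L_2$ are full-rank lattices that need not agree under any natural identification. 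Concretely, for $d=2$ let the generators $a,b$ translate by $(1,0),(0,1)$ in the flat of $X_1$ and by $(1,0),(1,1)$ in the flat of $X_2$: the sequence $g_n=a^nb^n$ escapes to infinity in direction $(1,1)/\sqrt2$ in the first flat but in direction $(2,1)/\sqrt5$ in the second. Your parenthetical justification (``depends only on the group element\dots, not on $Y_i$'') conflates independence of the $Y_i$-coordinate, which is true, with independence of the action, which is false; as written, the assertion that convergence in $X_1$ forces convergence in $X_2$ ``to the corresponding point of $S^{d-1}$'' has no justification.

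The gap is fixable, and the fix is the real content of the corollary. The composite $\Vtrans_2\circ\Vtrans_1^{-1}:L_1\to L_2$ is a group isomorphism between full-rank lattices in $\R^d$, hence extends uniquely to a linear automorphism $A$ of $\R^d$. A linear automorphism takes sequences escaping to infinity in an asymptotically fixed direction $\unitvec$ to sequences escaping in the asymptotically fixed direction $A\unitvec/\|A\unitvec\|$; equivalently, it induces a homeomorphism of $S^{d-1}=\partial\E^d$ compatible with the two compactifications of the lattice. With this inserted, your argument goes through: convergence of $(g_n)$ in the $X_1$-compactification is equivalent to directional convergence of $\Vtrans_1(g_n)$, which $A$ converts into the same statement for $\Vtrans_2(g_n)$, giving $\limset(X_1,\Z^d)\ge\limset(X_2,\Z^d)$; applying $A^{-1}$ gives the reverse inequality, and antisymmetry of $\ge$ finishes, exactly as you conclude. (Note also that, like the paper's own statement, you implicitly use proper discontinuity --- it is required both for the Flat Torus Theorem and for the $L_i$ to have full rank --- even though it is omitted from the corollary's hypotheses.)
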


\begin{lemma}
\label{lemma:subgroups}
Let $\Lambda\ge\Lambda'\in\eLcat(G)$ and $H\le G$ be quasi-isometrically embedded subgroup.
Denote by $\Lambda(H),\Lambda'(H)$ the corresponding subsets which are the limsets of $H$.
Then the limset map $\phi:\Lambda\to\Lambda'$ restricts to a limset map $\phi_H:\Lambda(H)\to\Lambda'(H)$.
\end{lemma}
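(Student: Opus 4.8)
The plan is to deduce this directly from Lemma~\ref{prop:rationalmaps} by viewing $H$ as a subspace of $G$. Write $\Lambda=\limset(X,G)$ and $\Lambda'=\limset(Y,G)$ for pseudo-geometric actions realizing the two limsets, so that the compactifications $G\cup\Lambda$ and $G\cup\Lambda'$ and the limset map $\phi\colon\Lambda\to\Lambda'$ are all in hand. Applying Lemma~\ref{prop:rationalmaps} to the subspace $H\subset G$ gives at once that $\overline H\cap\Lambda\ge\overline H\cap\Lambda'$ with limset map the restriction of $\phi$, where the two closures are formed in $G\cup\Lambda$ and $G\cup\Lambda'$. What remains is to verify that these subsets are exactly the sets $\Lambda(H)$ and $\Lambda'(H)$ named in the statement, namely honest limsets of $H$ arising from pseudo-geometric actions.

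First I would record that $H$ acts pseudo-geometrically on each of $X$ and $Y$: since the orbit map $G\to X$ is a QIE (because $\Lambda\in\eLcat(G)$) and $H\incl G$ is a QIE by hypothesis, the composite $H\to X$ is a QIE, and similarly for $Y$. Hence $\limset(X,H),\limset(Y,H)\in\eLcat(H)$. The substantive step is then the identification $\overline H\cap\Lambda=\limset(X,H)$ (and likewise on the $Y$ side). By the very construction of $G\cup\limset(X,G)$, a sequence $(h_n)\subset H$ converges to a point $\zeta\in\Lambda$ in $G\cup\Lambda$ if and only if the orbit points $h_nx_0$ converge to $\zeta$ in $\overline X$. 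Thus $\zeta$ lies in $\overline H\cap\Lambda$ precisely when it is an accumulation point of the $H$-orbit in $\partial X$, that is, precisely when $\zeta\in\limset(X,H)$; since $G$ is discrete, the closure adds no points of $G$, so $\overline H=H\cup\limset(X,H)$ really is the $H$-compactification.

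The main obstacle is exactly this identification: one must carefully relate convergence in the ambient compactification $G\cup\Lambda$ to convergence of orbits in $\overline X$, which is the defining property of $G\cup\limset(X,G)$. Once it is established on both sides, Lemma~\ref{prop:rationalmaps} gives $\Lambda(H)\ge\Lambda'(H)$ with $\phi_H=\phi|_{\Lambda(H)}$, as desired. Alternatively, one can sidestep Lemma~\ref{prop:rationalmaps} and argue straight from the sequence criterion of Lemma~\ref{le:characterization of existence of limset maps}: if $h_n\to\zeta\in\Lambda(H)$ then, since $\Lambda\ge\Lambda'$, also $h_n\to\phi(\zeta)\in\Lambda'$, and because every $h_n$ lies in $H$ the limit $\phi(\zeta)$ must lie in $\overline H\cap\Lambda'=\Lambda'(H)$; this shows both $\Lambda(H)\ge\Lambda'(H)$ and that the restricted map is $\phi|_{\Lambda(H)}$.
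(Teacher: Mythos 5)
Your proof is correct and follows exactly the route the paper intends: the paper states Lemma~\ref{lemma:subgroups} without proof, positioning it as an immediate consequence of Lemma~\ref{prop:rationalmaps} applied to the subspace $H\subseteq G$, which is precisely your argument. Your added verifications---that $H\to X$ is a QIE (so $\Lambda(H),\Lambda'(H)$ genuinely lie in $\eLcat(H)$) and that $\overline H\cap\Lambda=\limset(X,H)$---just make explicit the identifications the paper treats as self-evident.
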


In general, the preimage of $\Lambda'(H)$ under the unrestricted map $\phi$ can be larger than $\Lambda(H)$.

\begin{example}
\label{ex:limsetproblem}
Consider Example \ref{ex:twisting}.  We have $\F_2$ acting on \(\Gamma\times\R\) in two ways;
let $X_1$ denote $\Gamma\times\R$ with the twisted action, $X_2=\Gamma$ with the standard action, and \(H=\left<a\right>\).
There is a limset map \(\phi:\limset(X_1,\F_2)\to\limset(X_2,\F_2)\).  But \(\limset(X_1,H)\) is just the two points
\(l(\{a^{\pm\infty}\},0)\) whereas \(\phi^{-1}(\limset(X_2,H))\) consists of two intervals.
\end{example}

All CAT(0) boundaries of $\F_2$ are equivalent in $\eLcat(\F_2)$, since it is negatively curved.
They are homeomorphic to the Cantor set $C$.
But unless the actions chosen on the spaces $X_1$ and $X_2$ are very close, the schmears constructed
will be homeomorphic to $C\times[0,1]$.
But as mentioned in Example \ref{ex:schmear}, these schmears need not themselves even be comparable
in $\eLcat(\F_2)$.

The following proposition tells us that all of the major results in this paper hold if they hold for a subgroup
of finite index.

\begin{prop}
\label{prop:finiteindex}
Let $H$ be a finite index subgroup of a CAT(0) group $G$.  Assume $G$ acts pseudo-geometrically on CAT(0)
spaces $X$ and $Y$ with limsets $\Lambda$ and $\Lambda'$ respectively.
Then $\Lambda=\limset(X,H)$, $\Lambda'=\limset(Y,H)$, and $\Lambda\ge\Lambda'$ in $\eLcat(G)$ iff
$\Lambda\ge\Lambda'$ in $\eLcat(H)$.  If so, then the limset map $\Lambda\to\Lambda'$ which comes from
$G$ is the same as that which comes from $H$.
\end{prop}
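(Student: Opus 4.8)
The plan is to reduce the entire statement to the sequential criterion of Lemma~\ref{le:characterization of existence of limset maps} together with one elementary piece of CAT(0) geometry, which I will call the \emph{tracking fact}: if $(p_n),(q_n)$ are sequences in a CAT(0) space with $d(p_n,q_n)$ bounded and $p_n\to\zeta\in\partial X$, then $q_n\to\zeta$ as well. This follows from convexity of the metric by comparing the initial segments of the unit-speed geodesics $[x_0,p_n]$ and $[x_0,q_n]$, and it is the only geometric input needed.

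First I would establish the two set-equalities. Fix basepoints $x_0\in X$, $y_0\in Y$, and coset representatives $g_1,\dots,g_k$ with $G=\bigcup_{i=1}^k Hg_i$; put $D=\max_i d(g_ix_0,x_0)$. Because every $h\in H$ is an isometry, $d(hg_ix_0,hx_0)=d(g_ix_0,x_0)\le D$, so the $G$-orbit $Gx_0=\bigcup_i Hg_ix_0$ lies in the $D$-neighborhood of the $H$-orbit $Hx_0$, while trivially $Hx_0\subseteq Gx_0$. Thus the two orbits are a bounded Hausdorff distance apart, and by the tracking fact they have exactly the same limit points in $\partial X$. This gives $\Lambda=\limset(X,H)$, and the identical argument in $Y$ gives $\Lambda'=\limset(Y,H)$.

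With the sets identified, I would argue the equivalence through the criterion that, in either $\eLcat(G)$ or $\eLcat(H)$, $\Lambda\ge\Lambda'$ holds precisely when every orbit sequence converging in $X$ also converges in $Y$. The forward implication is immediate, since every sequence in $H$ is a sequence in $G$. For the reverse implication, which carries the real content, assume $\Lambda\ge\Lambda'$ in $\eLcat(H)$ with limset map $\phi_H$, and take any $(g_n)\subset G$ with $g_nx_0\to\zeta\in\Lambda$. Writing $g_n=h_ng_{i_n}$ with $h_n\in H$ and $i_n\in\{1,\dots,k\}$, the estimate $d(g_nx_0,h_nx_0)=d(g_{i_n}x_0,x_0)\le D$ holds \emph{uniformly in $n$}, so the tracking fact gives $h_nx_0\to\zeta$; hence $(h_n)$ is an $H$-sequence limiting to $\zeta$ in $X$, and the $\eLcat(H)$ hypothesis yields $h_ny_0\to\phi_H(\zeta)$ in $Y$. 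Applying the same boundedness estimate in $Y$, with $D'=\max_i d(g_iy_0,y_0)$ and $d(g_ny_0,h_ny_0)=d(g_{i_n}y_0,y_0)\le D'$, the tracking fact gives $g_ny_0\to\phi_H(\zeta)$ as well. This shows simultaneously that $\Lambda\ge\Lambda'$ in $\eLcat(G)$ and that the resulting $G$-limset map sends $\zeta$ to $\phi_H(\zeta)$, so it coincides with $\phi_H$ on $\Lambda=\limset(X,H)$.

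The main thing to get right, and the only place a naive approach stumbles, is the coset bookkeeping: one must arrange the boundedness estimates so that they hold uniformly over the entire sequence rather than merely along each coset class. Handling the full sequence $(h_n)$ at once, as above, sidesteps having to reconcile possibly distinct limits along different coset-subsequences—an issue that would otherwise force an explicit appeal to the well-definedness of $\phi_H$ as a function. Once this is arranged the argument is purely formal, and the final clause of the proposition, that the $G$- and $H$-limset maps agree, falls out of the same computation.
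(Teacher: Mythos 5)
Your proof is correct and follows essentially the same route as the paper's: both arguments replace a $G$-orbit sequence by an $H$-orbit sequence at uniformly bounded distance in $X$ and $Y$ simultaneously (your coset-representative constants $D,D'$ play exactly the role of the paper's single constant $K$, which it extracts from the quasi-isometry $H\hookrightarrow G$), and then conclude via the bounded-distance tracking fact that the two sequences have the same limit points in both boundaries. The only difference is expository, in that you spell out the tracking fact and the sequential criterion of Lemma \ref{le:characterization of existence of limset maps} explicitly, where the paper leaves them implicit.
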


\begin{proof}
Choose basepoints $x\in X$ and $y\in Y$.
Since $H$ has finite index in $G$, inclusion $H\to G$ is a quasi-isometry, and hence $H\to X$ and $H\to Y$
are QIEs.  Let $K\ge 0$ be a constant such that given $g\in G$, there is an $h\in H$ such that
$d_X(gx,hx)$ and $d_Y(gy,hy)$ are both bounded by $K$.  Then in particular, if $(g_n)\subset G$ converges to a point
of $\partial X$, there is a sequence $(h_n)\subset H$ such that $d_X(g_nx,h_nx)\le K$, so that $(h_nx)$ converges
to the same point of $\partial X$.  Since $d_Y(g_ny,h_ny)\le K$ as well, we know that the sequence $(g_ny)$
converges iff $(h_ny)$ converges, and if they do, then they converge to the same point of $\partial Y$.
\end{proof}

\section{The Schmear}
\label{sec:schmear}
We begin this section with a more formal treatment of slopes in products of CAT(0) spaces.
Let $X=X_1\times X_2$ be a product of two proper CAT(0) spaces.
A \textit{path} is a map of an interval \(I\subset\R\) into a space, where $I$ may either
be a closed interval $[0,D]$ or a half-open interval $[0,\infty)$.
Given a path $\alpha:I\to X$, we will write \(\alpha=(\alpha_1,\alpha_2)\)
to mean that for every $t$ in the domain of $\alpha$,
\(
	\alpha(t)=\bigl(\alpha_1(t),\alpha_2(t)\bigr).
\)
For us a path $\alpha$ will be called a \textit{geodesic} if there is a number \(\sigma\ge 0\) such that for every
$s\neq t$ in the domain of $\alpha$, we have
\[
	\frac
		{d\bigl(\alpha(s),\alpha(t)\bigr)}
		{|s-t|}
			=
		\sigma.
\]
$\sigma$ is called the \textit{speed} of $\alpha$.  This may mean that $\alpha$ is a \textit{geodesic
segment} (a geodesic with domain of the form $[0,D]$), a \textit{geodesic ray} (a geodesic with domain of the form 
$[0,\infty)$), or even perhaps a \textit{constant geodesic} (a geodesic whose image is a point). 
If $\sigma=1$, we say that the geodesic has \textit{unit speed}.

Let $\alpha=(\alpha_1,\alpha_2)$ be a nonconstant geodesic in $X$.  We define the \textit{slope of $\alpha$} by
\[
	\slopescal(\alpha)
=
	\frac
		{d_2\bigl(\alpha_2(s),\alpha_2(t)\bigr)}
		{d_1\bigl(\alpha_1(s),\alpha_1(t)\bigr)}
\]
for some $s\neq t$ in the domain of $\alpha$.  By the following lemma $M(\alpha)$ is well-defined
and independent of choice of parameterization.  As before, we allow $\infty$ as a value for $\slopescal$.

\begin{lemma}
\label{le:coordinategeospeeds}
Let \(\alpha=(\alpha_1,\alpha_2)\) be a path in $X_1\times X_2$.  Then $\alpha$ is a geodesic iff
$\alpha_1$ and $\alpha_2$ are.
If $\sigma_1$ and $\sigma_2$ are the speeds of $\alpha_1$ and $\alpha_2$,
then $\slopescal(\alpha)=\sigma_2/\sigma_1$ and the speed of $\alpha$ is $\sqrt{\sigma_1^2+\sigma_2^2}$.
\end{lemma}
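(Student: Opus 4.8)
The plan is to verify the biconditional and the two formulas directly from the definition of geodesic, $d(\alpha(s),\alpha(t))=\sigma|s-t|$, together with the defining relation $d^2=d_1^2+d_2^2$ of the $l_2$-product metric. The forward implication is the easy half: if $\alpha_1$ and $\alpha_2$ are geodesics of speeds $\sigma_1,\sigma_2$, then for any $s\neq t$ one computes
\[
	d(\alpha(s),\alpha(t))^2=d_1(\alpha_1(s),\alpha_1(t))^2+d_2(\alpha_2(s),\alpha_2(t))^2=(\sigma_1^2+\sigma_2^2)|s-t|^2,
\]
which shows at once that $\alpha$ is a geodesic of speed $\sqrt{\sigma_1^2+\sigma_2^2}$, and dividing the coordinate distances gives $\slopescal(\alpha)=\sigma_2/\sigma_1$. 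Thus both displayed formulas fall out of this direction for free.

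The substance is the reverse implication: assuming only that the product path $\alpha$ is a geodesic of some speed $\sigma$, I must show that \emph{each} coordinate has constant speed globally, not merely that it is locally geodesic. First I would isolate a projection lemma: if $d(p,r)=d(p,q)+d(q,r)$ in $X_1\times X_2$, then $d_i(p_i,r_i)=d_i(p_i,q_i)+d_i(q_i,r_i)$ for each $i$. This is where the strict convexity of the Euclidean norm enters. Writing $a_i=d_i(p_i,q_i)$, $b_i=d_i(q_i,r_i)$, $c_i=d_i(p_i,r_i)$, the factor triangle inequalities give $c_i\le a_i+b_i$, while Cauchy--Schwarz, in the form $\sqrt{(a_1^2+a_2^2)(b_1^2+b_2^2)}\ge a_1b_1+a_2b_2$, forces $\sqrt{c_1^2+c_2^2}\ge\sqrt{(a_1+b_1)^2+(a_2+b_2)^2}$; comparing the two bounds pins down $c_i=a_i+b_i$.

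With the projection lemma in hand, I apply it to every triple $\alpha(s),\alpha(u),\alpha(t)$ with $s\le u\le t$---these satisfy the product equality because $\alpha$ is a geodesic---to conclude that $f_i(s,t):=d_i(\alpha_i(s),\alpha_i(t))$ is additive along the path. Fixing the left endpoint $0$ of the domain and setting $L_i(t)=f_i(0,t)$, additivity gives $f_i(s,t)=L_i(t)-L_i(s)$ with $L_i$ nondecreasing, and the geodesic relation becomes
\[
	(L_1(t)-L_1(s))^2+(L_2(t)-L_2(s))^2=\sigma^2|t-s|^2.
\]
Thus $t\mapsto(L_1(t),L_2(t))$ is a curve in $\E^2$ whose points are Euclidean-distance $\sigma|t-s|$ apart; by the equality case of the triangle inequality in $\E^2$ (again strict convexity) such a curve is an affinely parameterized straight segment through the origin, so $L_i(t)=\sigma_i t$ for constants $\sigma_i\ge 0$ with $\sigma_1^2+\sigma_2^2=\sigma^2$. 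Hence $f_i(s,t)=\sigma_i|s-t|$, i.e.\ each $\alpha_i$ is a geodesic of speed $\sigma_i$, and the speed and slope formulas are recovered as in the forward direction.

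I expect the projection lemma---and the parallel planar-rigidity step that upgrades additivity to linearity---to be the crux: both rest on the strict convexity of the $l_2$ norm, and it is precisely this that prevents a coordinate from speeding up and slowing down while the product maintains constant speed. The remaining bookkeeping (nonnegativity of the $\sigma_i$, the degenerate cases of slope $0$ or $\infty$ when some $\sigma_i$ vanishes, and symmetry in $s,t$) is routine, and I would dispatch it in passing.
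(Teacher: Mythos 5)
Your proposal is correct, but it follows a different route from the paper. The paper disposes of the biconditional by citation---it invokes Proposition I.5.3(3) of Bridson--Haefliger for the fact that $\alpha$ is a geodesic iff both $\alpha_i$ are---and then obtains the speed and slope formulas by observing that any geodesic in the product lies in a flat quadrant or square (the product of the images of the two coordinate geodesics), where those relations are elementary Euclidean facts. You instead prove everything from first principles: your forward direction is essentially the same computation the paper hides inside the flat-quadrant remark, while your reverse direction---the real content, which the paper never argues but outsources---runs through a betweenness projection lemma (equality in the product triangle inequality forces equality in each factor, via the equality case of Cauchy--Schwarz) followed by an additivity-to-linearity rigidity step: the functions $L_i(t)=d_i(\alpha_i(0),\alpha_i(t))$ are additive along the path and their Euclidean combination is linear in $t$, so each $L_i$ is linear, i.e.\ each $\alpha_i$ has constant speed. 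Both arguments are sound; note that your definition of geodesic matches the paper's (the global constant-speed condition $d(\alpha(s),\alpha(t))=\sigma|s-t|$), which is what makes the projection lemma applicable to every triple of points on $\alpha$. What the paper's approach buys is brevity and reliance on a standard reference; what yours buys is a self-contained proof that in effect reproves the cited proposition and isolates exactly where strict convexity of the $l_2$ norm enters---it is what prevents one coordinate from speeding up while the other slows down, which is the only way the biconditional could fail.
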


\begin{proof}
The fact that $\alpha$ is a geodesic iff each $\alpha_i$ is \cite[Proposition I.5.3(3)]{bridson-haefliger}.
Once this is established, we know that every geodesic $\alpha$ lives in a subspace isometric to a flat
quadrant or square, where the equations relating speeds and slope are easily verified.
\end{proof}

In the current setup if \(\alpha_1\) is constant (its image is just a point),
then it is natural to say that \(\slopescal(\alpha)=\infty\).  If we fix a basepoint \(x_0\in X\) and identify points of
\( X\cup\partial  X\) with the collection of unit speed geodesics emanating from $x_0$, then this gives a function
\[
	\slopescal: (X-\{x_0\})\cup\partial X\to[0,\infty].
\]

\begin{prop}
\label{prop:Miscontinuous}
$\slopescal$ is continuous.
\end{prop}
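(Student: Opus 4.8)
The plan is to exploit the cone topology's description as the compact-open topology on unit-speed geodesics emanating from $x_0$, together with Lemma \ref{le:coordinategeospeeds}, which reduces the slope to the ratio of the coordinate speeds. Writing $x_0=(p_1,p_2)$ with $p_i\in X_i$, every unit-speed geodesic $\gamma=(\gamma_1,\gamma_2)$ issuing from $x_0$ has coordinate speeds $\sigma_1,\sigma_2\ge 0$ satisfying $\sigma_1^2+\sigma_2^2=1$, and $\slopescal(\gamma)=\sigma_2/\sigma_1$. Since the pair $(\sigma_1,\sigma_2)$ ranges over the quarter circle $\{(\cos\theta,\sin\theta):\theta\in[0,\tfrac{\pi}{2}]\}$ and $\theta\mapsto\tan\theta$ is a homeomorphism of $[0,\tfrac{\pi}{2}]$ onto $[0,\infty]$, it suffices to prove that the assignment sending a point of $(X-\{x_0\})\cup\partial X$ to its pair of coordinate speeds is continuous. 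This reformulation handles the exceptional values $0$ and $\infty$ automatically, so no separate argument is needed at the preferred copies of $\partial X_1$ and $\partial X_2$.

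For the core step I would fix a convergent sequence $x_n\to x$ in $(X-\{x_0\})\cup\partial X$ and let $\gamma_n,\gamma$ be the associated unit-speed geodesics, so that $\gamma_n\to\gamma$ uniformly on compact subsets of their common domain. For a constant-speed coordinate geodesic one has $d_i\bigl(p_i,\gamma_i(t)\bigr)=\sigma_i\,t$, so for any fixed admissible parameter $t>0$ the speeds are recovered by $\sigma_i(\gamma)=d_i\bigl(p_i,\gamma_i(t)\bigr)/t$. Because the coordinate projection $X\to X_i$ is $1$-Lipschitz (the metric being the $l_2$-metric) and the metric is continuous, $\gamma_n(t)\to\gamma(t)$ forces $d_i\bigl(p_i,(\gamma_n)_i(t)\bigr)\to d_i\bigl(p_i,\gamma_i(t)\bigr)$, whence $\sigma_i(\gamma_n)\to\sigma_i(\gamma)$ for $i=1,2$. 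Composing with the homeomorphism above yields $\slopescal(x_n)\to\slopescal(x)$, which is the desired continuity.

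The one point requiring care---and the main obstacle---is ensuring that a single parameter $t>0$ eventually lies in the domain of every $\gamma_n$ as well as of $\gamma$. Here I would invoke the characterization of convergence in the cone topology recalled earlier. If $x\in\partial X$ then $\gamma$ is a ray, and convergence $x_n\to x$ forces $d(x_0,x_n)\to\infty$, so every fixed $t$ lies in the domain of $\gamma_n$ for all large $n$. If instead $x\in X-\{x_0\}$, then $D:=d(x_0,x)>0$ and $d(x_0,x_n)\to D$; choosing any $t\in(0,D)$ guarantees that $t$ is interior to the domain of $\gamma_n$ once $n$ is large (and for any terms $x_n\in\partial X$ the domain is all of $[0,\infty)$, so no difficulty arises). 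Since $x\neq x_0$ in every case, such a $t$ can always be chosen, and the argument of the previous paragraph goes through. I expect the identity $\sigma_i(\gamma)=d_i(p_i,\gamma_i(t))/t$ to follow immediately from Lemma \ref{le:coordinategeospeeds}, leaving the domain bookkeeping as the only genuinely delicate piece.
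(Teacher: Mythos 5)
Your proposal is correct and follows essentially the same route as the paper: recovering the coordinate speeds by evaluating the unit-speed geodesic at a fixed time $t$ and taking their ratio is exactly the paper's identity $\slopescal = (\delta_2\circ\pi_\epsilon)/(\delta_1\circ\pi_\epsilon)$, where the geodesic retraction $\pi_\epsilon$ is precisely your evaluation-at-time-$\epsilon$ map and its continuity (via convexity of the metric) is the paper's substitute for your compact-open-topology argument. The quarter-circle/tangent reformulation and the per-sequence domain bookkeeping are only cosmetic differences.
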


\begin{proof}
Fix \(\epsilon>0\) and let $\pi_\epsilon$ denote the geodesic retraction based at $x_0$
\[
	X\cup\partial X-B_\epsilon(x_0)\to S_\epsilon(x_0)
\]
where $S_\epsilon$ denotes the sphere of radius $\epsilon$;
this is a continuous function by convexity of the metric.  Let $\rho_i$ denote coordinate projection
\(X\to X_i\), which is also continuous.  Finally, let \(\delta_i(x)=d_i(\rho_i(x_i),\rho_i(x_0))\).
Then on \(X\cup\partial X-B_\epsilon(x_0)\),
\[
	\slopescal
=
	\frac
		{\delta_2\circ\pi_\epsilon}
		{\delta_1\circ\pi_\epsilon}.
\]
Therefore $\slopescal$ is continuous on \(X\cup\partial X-B_\epsilon(x_0)\) for every \(\epsilon>0\).
\end{proof}

If $G$ acts properly discontinuously on $X$, then
for \(g\in G\) not stabilizing $x_0$, we may define \(\slopescal(g)=\slopescal([x_0,gx_0])\).
Since the stabilizer of $x_0$ is finite and we are interested in what happens at infinity, we will make the arbitrary definition
$\slopescal(g)=0$ when $gx_0=x_0$.  $\slopescal$ extends continuously to \(\Lambda=\limset( X,G)\).

\subsection{The Construction}
\label{subsection:intermediatelimset}
This subsection completes the proof of Theorem \ref{Thm:Schmear}.
Assume that $G$ acts pseudo-geometrically on two CAT(0) spaces $X_1$ and $X_2$.
Consider the inclusion \(G\incl G\times G\) as the diagonal subgroup \(G^\Delta=\{(g,g)|g\in G\}\).
We define
\[
	\Lambda=\Lambda(G^\Delta,X_1,X_2)=\limset( X,G^\Delta).
\]
Choose basepoints \(x_i\in X_i\) and set \( x=(x_1,x_2)\in X=X_1\times X_2\).

\begin{lemma}
The map $G^\Delta\to X$ is a QIE.
If in addition the action of $G$ on each $X_i$ is semi-simple, then
so is the action of $G^\Delta$ on $X$.
\end{lemma}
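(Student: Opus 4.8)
The plan is to verify the two defining inequalities of a QIE directly from the hypothesis that each orbit map $g\mapsto gx_i$ is a QIE, and then to read off semi-simplicity from Proposition \ref{prop:minsetproduct}. First I would fix the word metric $d_G$ on $G$ and QIE constants $\lambda_i,\varepsilon_i>0$ so that
\[
\tfrac{1}{\lambda_i}d_G(g,h)-\varepsilon_i\le d_i(gx_i,hx_i)\le\lambda_i d_G(g,h)+\varepsilon_i
\]
for all $g,h\in G$ and $i=1,2$. Since the $l_2$-metric on $X=X_1\times X_2$ gives $d(g^\Delta x,h^\Delta x)=\sqrt{d_1(gx_1,hx_1)^2+d_2(gx_2,hx_2)^2}$, everything reduces to comparing this Euclidean combination with the individual coordinate bounds.

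For the upper bound I would use $\sqrt{a^2+b^2}\le a+b$ for $a,b\ge 0$ to obtain
\[
d(g^\Delta x,h^\Delta x)\le(\lambda_1+\lambda_2)\,d_G(g,h)+(\varepsilon_1+\varepsilon_2).
\]
For the lower bound, since $\sqrt{a^2+b^2}\ge a$ always, I would simply discard the second coordinate, giving $d(g^\Delta x,h^\Delta x)\ge d_1(gx_1,hx_1)\ge\tfrac{1}{\lambda_1}d_G(g,h)-\varepsilon_1$. Setting $\lambda=\lambda_1+\lambda_2$ and $\varepsilon=\varepsilon_1+\varepsilon_2$ then yields both halves of the QIE definition at once. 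No coarse surjectivity is needed for a QIE, so there is nothing further to check for the first assertion.

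For the semi-simplicity claim, I would observe that $g^\Delta$ acts on $X$ as the product isometry $(g_1,g_2)$, where $g_i$ is the isometry of $X_i$ through which $g$ acts; the hypothesis is precisely that each $g_i$ is semi-simple. Proposition \ref{prop:minsetproduct} asserts that $(g_1,g_2)$ is semi-simple if and only if both $g_1$ and $g_2$ are, so the conclusion is immediate (and, via the same proposition, $\Min_X(g^\Delta)=\Min_{X_1}(g_1)\times\Min_{X_2}(g_2)$, which will be useful later).

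I expect no serious obstacle here: the QIE estimate is essentially a one-line consequence of the elementary inequalities $a\le\sqrt{a^2+b^2}\le a+b$, and the semi-simplicity is a direct citation of Proposition \ref{prop:minsetproduct}. The only points meriting any care are the bookkeeping of the two independent QIE constants and the remark that the lower bound requires only a single coordinate, so that the possibly unequal values of $\lambda_1$ and $\lambda_2$ cause no difficulty.
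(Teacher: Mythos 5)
Your proof is correct, but it follows a genuinely different route from the paper's. The paper factors the orbit map as $G\to G\times G\to X$: the diagonal embedding $G\to G\times G$ is a QIE because the word metric on $G\times G$ induced by the generating set $(\mathcal{S}\times\{1\})\cup(\{1\}\times\mathcal{S})$ satisfies $l'(g,h)=l(g)+l(h)$, hence $l'(g,g)=2l(g)$, and the orbit map $G\times G\to X$ is declared a QIE by the \v{S}varc-Milnor Lemma. You instead verify both QIE inequalities for $G^\Delta\to X$ directly from the coordinate QIEs, using $a\le\sqrt{a^2+b^2}\le a+b$ and the observation that the lower bound only needs one coordinate. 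The trade-off is worth noting: the paper's factorization isolates a clean group-theoretic fact (the diagonal is bi-Lipschitz in word metrics), but its appeal to \v{S}varc-Milnor tacitly assumes that $G\times G$ acts geometrically---in particular cocompactly---on $X$, which holds when the $G$-actions on the $X_i$ are geometric but not under the pseudo-geometric hypothesis actually in force in this subsection and in Theorem \ref{Thm:Schmear}; covering that case requires exactly the kind of direct product estimate you perform. So your argument is, if anything, better adapted to the stated generality, at the cost of a little constant bookkeeping. The semi-simplicity claim is handled identically in both proofs: apply Proposition \ref{prop:minsetproduct} to the product isometry $(g,g)$.
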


\begin{proof}
Since $G\times G$ acts on $X$ geometricaly, we know that $G\times G\to X$ is a QIE by
the \v{S}varc-Milnor Lemma.
To see that $G^\Delta\to X$ is a QIE it suffices to check that the isomorphic embedding
$G\to G\times G$ which takes $G$ to $G^\Delta$ is a QIE.  Let $l$ be a length
metric on $G$ with respect to some finite generating set $\eS$.  Let $\eS_1=\eS\times\{1\}$
and $\eS_2=\{1\}\times\eS$ which generate $G\times\{1\}$ and $\{1\}\times G$ respectively.
Then $\eS_1\cup\eS_2$ is a finite generating set for $G\times G$ inducing the length metric
$l'(g,h)=l(g)+l(h)$.  In particular, $l'(g,g)=2l(g)$.
To get the second statement, simply apply Proposition \ref{prop:minsetproduct}.
\end{proof}

Assume $G$ acts pseudo-geometrically on CAT(0) spaces $X_1$ and $X_2$ and consider the
action of $G^\Delta$ on $X=X_1\times X_2$ with $\Lambda=\limset(X,G^\Delta)$.

\begin{lemma}
\label{lemma:limDGbdd}
\(\Lambda\) misses \(\partial X_1\cup\partial X_2\).
\end{lemma}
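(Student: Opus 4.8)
The plan is to translate the geometric statement into a statement about slopes. Recall that the preferred copy of $\partial X_1$ inside the join $\partial X_1 * \partial X_2$ is exactly the set of boundary points of slope $0$, while the preferred copy of $\partial X_2$ is the set of points of slope $\infty$. So it suffices to prove that every $\zeta\in\Lambda$ satisfies $0<\slopescal(\zeta)<\infty$.

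First I would fix $\zeta\in\Lambda$ and write $\zeta=\lim_n g_n^\Delta x$ for a sequence $(g_n)\subset G$. By the previous lemma the map $G^\Delta\to X$ is a QIE, so the orbit points escape to infinity exactly when the wordlengths $l(g_n)\to\infty$; we may therefore assume $l(g_n)\to\infty$. The segment $[x,g_n^\Delta x]$ projects to the segments $[x_1,g_nx_1]$ and $[x_2,g_nx_2]$ in the two factors, so by Lemma \ref{le:coordinategeospeeds} its slope is
\[
	\slopescal(g_n)=\frac{d_2(x_2,g_nx_2)}{d_1(x_1,g_nx_1)}.
\]

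The key step is to bound this ratio using the two QIE hypotheses. Let $\lambda_i,\varepsilon_i$ be quasi-isometry constants for $G\to X_i$, so that $\frac{1}{\lambda_i}l(g_n)-\varepsilon_i\le d_i(x_i,g_nx_i)\le\lambda_i l(g_n)+\varepsilon_i$. Dividing the $X_2$-estimates by the $X_1$-estimates shows that $\slopescal(g_n)$ lies between $(\frac{1}{\lambda_2}l(g_n)-\varepsilon_2)/(\lambda_1 l(g_n)+\varepsilon_1)$ and $(\lambda_2 l(g_n)+\varepsilon_2)/(\frac{1}{\lambda_1}l(g_n)-\varepsilon_1)$. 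As $l(g_n)\to\infty$ these endpoints converge to $1/(\lambda_1\lambda_2)$ and $\lambda_1\lambda_2$ respectively, so for all large $n$ the slopes $\slopescal(g_n)$ are confined to a compact subinterval of $(0,\infty)$. Finally, since $\slopescal$ is continuous (Proposition \ref{prop:Miscontinuous}) and $g_n^\Delta x\to\zeta$, we conclude $\slopescal(\zeta)=\lim_n\slopescal(g_n)\in(0,\infty)$; hence $\zeta$ lies in neither preferred copy, and $\Lambda$ misses $\partial X_1\cup\partial X_2$.

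The argument is essentially a direct computation, so there is no serious obstacle. The only point demanding a little care is the bookkeeping of the additive constants $\varepsilon_i$: the multiplicative QIE constants are what produce the uniform bound $[1/(\lambda_1\lambda_2),\lambda_1\lambda_2]$, while the additive terms must be checked to wash out in the limit $l(g_n)\to\infty$, keeping the denominator bounded away from zero.
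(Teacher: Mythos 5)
Your proof is correct and takes essentially the same approach as the paper: both bound the slopes $\slopescal(g_n^\Delta)=d_2(x_2,g_nx_2)/d_1(x_1,g_nx_1)$ away from $0$ and $\infty$ using QIE estimates and then pass to the limit (via continuity of $\slopescal$) to conclude $\slopescal(\zeta)\in(0,\infty)$. The only cosmetic difference is that the paper works with the single composite QIE $Gx_1\to Gx_2$ (one pair of constants $\lambda,\epsilon$, giving the bound $[1/\lambda,\lambda]$), while you route through the word metric with the two QIEs $G\to X_i$ separately, giving $[1/(\lambda_1\lambda_2),\lambda_1\lambda_2]$.
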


\begin{proof}
We use the fact that the map \(Gx_1\to Gx_2\) is a QIE
to prove that \(\slopescal(\Lambda)\) is bounded away from 0 and $\infty$.
Let $\lambda\ge 1$ and $\epsilon\ge 0$ be constants such that for \(g\in G\),
\[
	\frac{1}{\lambda}d_1(x_1,gx_1)-\epsilon
		\le
	d_2(x_2,gx_2)
		\le
	\lambda d_1(x_1,gx_1)+\epsilon.
\]
Then whenever we have a sequence \((g_n)\subset G\) such that
\(g_n^\Delta x\to\zeta\in\partial X\), it follows that $d_1(x_1,g_nx_1)\to\infty$.
So
\[
	\frac
		{\frac{1}{\lambda} d_1(x_1,g_nx_1)-\epsilon}
		{d_1(x_1,g_nx_1)}
		\le
	\slopescal(g_n^\Delta)
		\le
	\frac
		{\lambda d_1(x_1,g_nx_1)+\epsilon}
		{d_1(x_1,g_nx_1)}.
\]
Letting \(n\to\infty\), we get
\[
	\frac1\lambda\le \slopescal(\zeta)\le\lambda.
\]
\end{proof}

Consider the maps $e_i:\partial X-(\partial X_1\cup\partial X_2)\to\partial X_i$ which collapse the join arcs.
Specifically, take a ray $\alpha=(\alpha_1,\alpha_2)$ in $X$ based at $x$.  To say that $\alpha(\infty)$ lies interior
to a join arc means that $\slopescal(\alpha)$ is not 0 or $\infty$.  Then $e_i(\alpha(\infty))=\alpha_i(\infty)$.
This extends the projection map \(X\to X_i\) to an (open) subset of $\partial X$.
We take $\phi_i=e_i|_\Lambda$.  The following lemma
completes the proof of Theorem \ref{Thm:Schmear}:

\begin{lemma}
$\phi_1$ and $\phi_2$ are limset maps.
\end{lemma}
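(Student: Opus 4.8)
The plan is to verify the sequential criterion of Lemma~\ref{le:characterization of existence of limset maps} for the pair $\Lambda \ge \limset(X_i,G)$, viewing both as limsets of $G$ via the orbit maps $g\mapsto g^\Delta x$ and $g\mapsto gx_i$. Thus I fix $\zeta\in\Lambda$ and a sequence $(g_n)\subset G$ with $g_n^\Delta x\to\zeta$ in $\overline X$, and aim to show that $(g_nx_i)$ converges in $\overline{X_i}$. Since limset maps are unique, it suffices to show the limit is exactly $e_i(\zeta)$; this simultaneously produces the extension of the identity on $G$ and identifies its boundary restriction with $\phi_i=e_i|_\Lambda$.

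Next I would record the consequences of $\zeta$ lying in $\Lambda$. Write $\zeta$ as a unit-speed ray $\alpha=(\alpha_1,\alpha_2)$ based at $x$. By Lemma~\ref{lemma:limDGbdd}, $\slopescal(\zeta)=M$ lies in $(0,\infty)$, so by Lemma~\ref{le:coordinategeospeeds} the coordinate rays $\alpha_i$ are genuine (nonconstant) geodesic rays of positive speeds $\sigma_1,\sigma_2$ with $\sigma_1^2+\sigma_2^2=1$ and $\sigma_2/\sigma_1=M$. Because $\slopescal$ is continuous (Proposition~\ref{prop:Miscontinuous}) and $g_n^\Delta x\to\zeta$, the segment slopes satisfy $M_n:=\slopescal(g_n^\Delta)\to M$; hence the coordinate speeds $\sigma_{n,i}$ of the segments $[x,g_n^\Delta x]$ converge to $\sigma_i$, and from $d(x,g_n^\Delta x)\to\infty$ together with $M_n\to M\in(0,\infty)$ one gets $d_i(x_i,g_nx_i)\to\infty$.

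The heart of the argument --- and the step I expect to be the main obstacle --- is to pass from convergence in the product compactification to convergence of the projected orbit in $\overline{X_i}$, correctly handling the fact that the projected segments are not unit-speed and that their speeds drift with $n$. Let $\gamma_n=[x,g_n^\Delta x]$. The cone-topology criterion recalled above says $\gamma_n\to\alpha$ uniformly on compact sets; since each coordinate projection is $1$-Lipschitz, the relation $d_i(\gamma_{n,i}(t),\alpha_i(t))\le d(\gamma_n(t),\alpha(t))$ gives $\gamma_{n,i}\to\alpha_i$ uniformly on compacta, where $\gamma_{n,i}=[x_i,g_nx_i]$ is the coordinate segment. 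To feed this into the cone topology on $\overline{X_i}$ I must reparameterize $\gamma_{n,i}$ and $\alpha_i$ to unit speed; writing $\tilde\gamma_{n,i}(s)=\gamma_{n,i}(s/\sigma_{n,i})$ and $\tilde\alpha_i(s)=\alpha_i(s/\sigma_i)$ and splitting
\[
	d_i\bigl(\tilde\gamma_{n,i}(K),\tilde\alpha_i(K)\bigr)
	\le
	d_i\bigl(\gamma_{n,i}(K/\sigma_{n,i}),\gamma_{n,i}(K/\sigma_i)\bigr)
	+
	d_i\bigl(\gamma_{n,i}(K/\sigma_i),\alpha_i(K/\sigma_i)\bigr),
\]
the first term is bounded by $K\,\sigma_{n,i}\,|1/\sigma_{n,i}-1/\sigma_i|\to 0$ using $\sigma_{n,i}\to\sigma_i>0$, and the second tends to $0$ by the uniform-on-compacta convergence (the relevant parameter values eventually lie in the domains since $d_i(x_i,g_nx_i)\to\infty$). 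Thus the unit-speed segments $[x_i,g_nx_i]$ track $\alpha_i$, so $g_nx_i\to\alpha_i(\infty)=e_i(\zeta)$ in $\overline{X_i}$.

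Having verified the criterion, Lemma~\ref{le:characterization of existence of limset maps} yields $\Lambda\ge\limset(X_i,G)$, and the identification of the limit with $e_i(\zeta)$ shows the resulting (unique) limset map is precisely $\phi_i$. Carrying this out for $i=1,2$ completes the proof. The only genuinely delicate point is the uniform reparameterization estimate above; everything else is bookkeeping against the slope formalism already developed.
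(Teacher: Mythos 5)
Your proof is correct and takes essentially the same route as the paper's: both arguments project the unit-speed segments $[x,g_n^\Delta x]$ coordinate-wise, using that coordinate projections are $1$-Lipschitz so that uniform-on-compacta convergence to $\alpha$ passes to convergence of $\gamma_n^i$ to $\alpha_i$. The only difference is that you make explicit the unit-speed reparameterization step (via $\sigma_{n,i}\to\sigma_i>0$, which rests on Lemma \ref{lemma:limDGbdd} and Proposition \ref{prop:Miscontinuous}), a detail the paper's proof leaves implicit.
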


\begin{proof}
Assume we have \((g_n)\subset G\) such that $g_n^\Delta x\to\zeta\in\partial X$.
Let $\alpha=(\alpha_1,\alpha_2)$ be the unit speed geodesic ray with $\alpha(\infty)=\zeta$.
Then \(\phi_i(\zeta)=\alpha_i(\infty)\).
Let $\gamma_n=(\gamma_n^1,\gamma_n^2)$ be a unit speed
parameterization of $[ x,g_n^\Delta x]$.  Since projections do not increase distance, we have for
all \(t\ge 0\) and $n$ large enough so that $d( x,g_n^\Delta x)\ge t$,
\(
	d\bigl(\gamma_n^i(t),\alpha_i(t)\bigr)
		\le
	d\bigl(\gamma_n(t),\alpha(t)\bigr).
\)
Therefore, \(\gamma_n(t)\to\alpha(t)\) implies that \(\gamma_n^i(t)\to\alpha_i(t)\).
\end{proof}

Recent work by Link on lattices in certain CAT(0) groups allows us to understand schmears of negatively
curved groups \cite{link}.  Although Link's work applies to a wider class of CAT(0) groups, the following
argument requires strong rigidity, which is is only known for a handful of types of groups.

\begin{Co}
\label{Co:Link}
Let $G$ be a negatively curved group acting gometrically on two CAT(0) spaces $X_1$ and $X_2$,
and $\Lambda$ be the schmear of the (equivalent) pair $\partial X_1$,$\partial X_2$.  Then $\Lambda$ is homeomorphic to $\partial G\times I$
where $I$ is a closed possibly degenerate interval.
The limset maps $\Lambda\to\partial X_1$ simply collapse the $I$ coordinate.
\end{Co}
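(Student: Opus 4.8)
The plan is to use the strong rigidity of negatively curved groups to collapse the schmear onto a single copy of $\partial G$ carrying a one-dimensional slope coordinate, and then to invoke Link's equidistribution results to identify the fibers of that coordinate.

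Since $G$ is negatively curved it is strongly rigid, so there is a limset map $h:\partial X_1\to\partial X_2$ which is a homeomorphism. First I would observe that $\Lambda$ lies over the graph of $h$: if $g_n^\Delta x\to\zeta\in\Lambda$, then projecting as in the proof that $\phi_1,\phi_2$ are limset maps gives $g_nx_1\to\phi_1(\zeta)$ and $g_nx_2\to\phi_2(\zeta)$, and the limset map $h$ forces $\phi_2(\zeta)=h(\phi_1(\zeta))$. Combining this with the slope function $\slopescal$, which is continuous by Proposition \ref{prop:Miscontinuous} and, by Lemma \ref{lemma:limDGbdd}, takes values in some $[1/\lambda,\lambda]$, produces a map
\[
	\Psi:\Lambda\to\partial G\times[1/\lambda,\lambda],\qquad \zeta\mapsto\bigl(\phi_1(\zeta),\slopescal(\zeta)\bigr),
\]
under the identification $\partial X_1\cong\partial G$. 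This $\Psi$ is continuous, and it is injective because a point of $\Lambda$ is determined by the triple $(\phi_1(\zeta),\phi_2(\zeta),\slopescal(\zeta))=(\phi_1(\zeta),h(\phi_1(\zeta)),\slopescal(\zeta))$, hence by its $\Psi$-image.

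Next I would set $a=\inf_\Lambda\slopescal$ and $b=\sup_\Lambda\slopescal$ and put $I=[a,b]\subseteq[1/\lambda,\lambda]$. Since $\Lambda$ is closed in the compact space $\partial X$ it is compact, so $a,b$ are attained, $\im\Psi$ is compact, and $\im\Psi\subseteq\partial G\times I$. The heart of the matter is the reverse containment $\partial G\times I\subseteq\im\Psi$: over every boundary point, every slope in $[a,b]$ must be realized. For this I would use the density of attracting fixed points of hyperbolic elements in $\partial G$. For such a $g$, the sequence $(g^n)^\Delta x$ converges to a point $\zeta'\in\Lambda$ with $\phi_1(\zeta')=g^{+\infty}$ and $\slopescal(\zeta')=\lim_n\slopescal((g^n)^\Delta)=|g|_{X_2}/|g|_{X_1}$, the ratio of translation lengths; thus $\bigl(g^{+\infty},|g|_{X_2}/|g|_{X_1}\bigr)\in\im\Psi$.

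The main obstacle is to show that these points are dense in $\partial G\times[a,b]$, which reduces to a uniformity statement: hyperbolic elements whose attracting fixed point lies in any prescribed open subset of $\partial G$ realize length ratios dense in all of $[a,b]$. This is exactly what Link's work \cite{link} supplies—equidistribution of the $|\cdot|_{X_2}/|\cdot|_{X_1}$-spectrum of closed geodesics, equivalently mixing of the geodesic flow weighted by the two displacement functions, forces the achievable ratios near any boundary direction to fill the entire interval. Granting this, compactness of $\im\Psi$ lets me pass to limits and conclude $\partial G\times I\subseteq\im\Psi$, so $\im\Psi=\partial G\times I$. Finally $\Psi$ is a continuous bijection from the compact $\Lambda$ onto the Hausdorff space $\partial G\times I$, hence a homeomorphism; when $a=b$ the interval degenerates and $\Lambda\cong\partial G$. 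Under this identification $\phi_1$ is projection to the first factor, which collapses the $I$-coordinate (and $\phi_2=h\circ\phi_1$ does the same), completing the argument.
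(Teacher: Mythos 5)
Your overall strategy coincides with the paper's: use strong rigidity to force $\Lambda$ to sit over the graph of the homeomorphism $h:\partial X_1\to\partial X_2$ (your argument that $\phi_2=h\circ\phi_1$ is exactly the paper's commuting-diagram step), and then call on Link \cite{link} for the one-dimensional slope structure. The formal skeleton is sound: $\Psi=(\phi_1,\slopescal)$ is a continuous injection defined on the compact set $\Lambda$, so the whole statement reduces to showing $\im\Psi=\partial G\times I$, and your observation that powers of a hyperbolic element $g$ contribute the point $\bigl(g^{+\infty},|g|_{X_2}/|g|_{X_1}\bigr)$ to $\im\Psi$ is correct.

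However, there is a genuine gap at the step you yourself flag as the heart of the matter. You defer the claim that hyperbolic elements with attracting fixed point in any prescribed open subset of $\partial G$ realize length ratios dense in all of $[a,b]$ to Link, describing this as ``equidistribution of the length-ratio spectrum, equivalently mixing of a weighted geodesic flow.'' That is not what the cited work provides. The results the paper uses are topological structure theorems: \cite[Theorem B]{link} says that for a group containing two independent regular axial isometries the regular limit set splits as a product $F_G\times P_G$ with $F_G\subset\partial X_1\times\partial X_2$ and $P_G$ a set of slopes, and \cite[Theorem C]{link} says $P_G$ is convex, hence a closed interval. The locality-independence of achievable slopes that you need is precisely the content of that product splitting --- it is the hard part, not a consequence of a separate equidistribution theorem --- so as written your proof assumes the crux under an inaccurate citation. (The paper instead quotes Theorems B and C directly: $\Lambda=F_G\times P_G$, strong rigidity identifies $F_G$ with the graph of $h$, hence with $\partial G$, and $P_G$ is a closed interval.) A second, smaller omission: Link's theorems require two independent regular axial isometries, i.e.\ $G$ non-elementary; the paper disposes of the elementary (infinite virtually cyclic) case separately, where $\Lambda$ is just the two endpoints of an axis, and you never verify this hypothesis or treat that case.
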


\begin{proof}
If $G$ is elementary, then $G^\Delta$ is an infinite virtually cyclic group acting by semi-simple isometries on
the CAT(0) space $X=X_1\times X_2$.  Its limit set $\Lambda$ is just two points, namely the endpoints of the axis
of the finite index infinite cyclic subgroup.  Assume $G$ is non-elementary.
Then, in the language of \cite{link}, it contains two independent regular axial isometries.
By Lemma \ref{lemma:limDGbdd}, we have $\Lambda\subset\partial X_1\times\partial X_2\times(0,\infty)$
and the maps $\phi_1$ and $\phi_2$ collapse the third coordinate.  If we reparameterize the third coordinate
according to angles made with $X_1$ instead of slopes, then $\Lambda\subset\partial X_1\times\partial X_2\times(0,\pi/2)$.

By \cite[Theorem B]{link}, $\Lambda$ splits as a product $F_G\times P_G$ where $F_G\subset\partial X_1\times\partial X_2$
and $P_G\subset(0,\pi/2)$.
Since $G$ is strongly rigid, the two boundaries $\partial X_1$ and $\partial X_2$ are equivalent.
By commutativity of the diagram
\[
\begin{array}{ccccc}
	&& \Lambda \\
	& \swarrow && \searrow \\
	\partial X_1 &&\approx&& \partial X_2
\end{array}
\]
it follows that $F_G$ is the graph of the homeomorphism.  By \cite[Theorem C]{link},
$P_G$ is a closed interval.
\end{proof}

Recall that the \textit{Tits metric} on the boundary of a CAT(0) space is the length metric associated
to the Tits angle metric and generates a finer topology than the cone topology.
In fact, there is a choice of $I\subset(0,\pi/2)$ such that when $\partial G$ is given the Tits metric
(giving it the topology of an uncountable discrete space) and $\Lambda$ is given the subspace metric
corresponding to the Tits metric on $\partial(X_1\times X_2)$ (giving it the structure of a spherical
join), then $I$ may be chosen so that the homeomorphism $\partial G\times I\to\Lambda$ is an
isometry with respect to this metric.

Link's results apply in a much more general context then the negatively curved setting.
For instance, the proof of this Corollary can be extended to groups $G$ acting geometrically
on CAT(0) spaces with isolated flats.
If they are not virtually abelian, then they have infinite Tits diameter by a theorem
of Hruska and Kleiner in \cite{hruska-kleiner}.  Then we apply a theorem of Papasoglu and Swenson
to find a pair of independent regular axial isometries \cite{papasoglu-swenson}.

\section{Geometric Actions of \texorpdfstring{$G=H\times\Z^d$}{G=H x Zd}}
\label{sec:mapsareCE}
Let $H$ be a CAT(0) group and $G=H\times\Z^d$.
We will write the coordinates of elements of $G$ in this direct product using the notation $\grouppair{w}{c}$.
Assume $G$ acts geometrically on a CAT(0) space $X$.
Applying the Flat Torus Theorem,
the minset $\Min(\Z^d)$ of $\Z^d$ is closed, convex, $G$-invariant, and splits as \(Y\times E\)
where $E$ is an isometric copy of $\E^n$.
The fact that $\Min(\Z^d)$ is convex and $G$-invariant
means that $\limset(X,G)\subset\partial\Min(\Z^d)$; for if we choose $x\in\Min(\Z^d)$, then $Gx\subset\Min(\Z^d)$.
We will assume that $X=Y\times E$.
Furthermore, the action of $G$ preserves this splitting in the following sense: Every isometry
\(g\in G\) can be written coordinate-wise as \((g_Y,g_E)\) where $g_Y$ and $g_E$ are isometries of $Y$ and $E$.
For clarity, the two notations are related by the rules $\grouppair{w}{c}_Y=w_Y$ and $\grouppair{w}{c}_E=w_Ec_E$.
The factor $\Z^d$ acts only in the $E$-coordinate; that is, for \(c\in\Z^d\), \(c_Y=\id_Y\).
The action of $\Z^d$ on $E$ is geometric which means that \(\partial E=\limset(X,\Z^d)\).
In addition, the projected action of $H$ on $Y$ by the rule $h\mapsto h_Y$ is also geometric.

Choose a basepoint $y_0\in Y$ and let \(x_0=(y_0,0)\) be the specified basepoint in $X$
where $0$ is the origin in $E$.

\begin{lemma}
The action of $H$ on $E$ is by translations.
\end{lemma}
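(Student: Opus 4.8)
The plan is to analyze how an element $h \in H$, written in coordinates as $(h_Y, h_E)$, acts on the Euclidean factor $E \cong \E^n$, and to show that $h_E$ must be a pure translation (with no rotational part). The key structural input is the commutativity of the direct product: every $h \in H$ commutes with every $c \in \Z^d$. Since the action preserves the splitting $X = Y \times E$, this commutation passes to the $E$-coordinate, so $h_E$ commutes with the isometry $c_E$ for each $c \in \Z^d$. By the Flat Torus Theorem (part (2)), each $c_E$ is a translation of $E$, and since $\Z^d$ acts geometrically on $E \cong \E^n$ we have $n = d$ and these translations span a lattice in the full translation group of $E$.

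First I would recall that an isometry of Euclidean space $\E^d$ decomposes as $g_E(v) = A v + b$ for an orthogonal matrix $A$ and a vector $b$. The commutation relation $h_E c_E = c_E h_E$, applied to a translation $c_E(v) = v + w_c$, forces $A w_c = w_c$ for every translation vector $w_c$ arising from the $\Z^d$-action. Because those translation vectors span $E$ (the action being cocompact on $E \cong \E^d$), the orthogonal part $A$ must fix a spanning set of vectors and hence equal the identity. This shows $h_E$ has trivial linear part, i.e. $h_E$ is itself a translation.

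The main obstacle I expect is making precise the claim that the translation vectors $\{w_c : c \in \Z^d\}$ span $E$. This is exactly where cocompactness of the $\Z^d$-action enters: by the Flat Torus Theorem the quotient of each flat $\{y\}\times E$ by $\Z^d$ is a $d$-torus, so the translations must generate a cocompact lattice, whose vectors necessarily span $\E^d$ over $\R$. I would cite part (3) of the Flat Torus Theorem for this. Once spanning is established, the linear-algebra step (an orthogonal transformation fixing a spanning set is the identity) is routine.

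A secondary subtlety is verifying that the commutation relation genuinely descends to the $E$-factor. Since the product decomposition is preserved, $(hc)_E = h_E c_E$ and $(ch)_E = c_E h_E$ as compositions of isometries of $E$; because $h$ and $c$ commute in $G$, these two composite isometries of $E$ agree, giving the needed relation. I would state this descent explicitly, noting that the coordinate-wise splitting of isometries is multiplicative, so that once $hc = ch$ in $G$ we may read off $h_E c_E = c_E h_E$ directly. With these two points in hand the conclusion that $h_E$ is a translation for every $h \in H$, i.e. that the action of $H$ on $E$ is by translations, follows immediately.
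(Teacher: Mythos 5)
Your proof is correct, but it takes a genuinely different route from the paper's. The paper argues via minsets: for $w\in H$, since $\Z^d$ centralizes $w_E$, the minset $\Min(w_E)\subseteq E$ is $\Z^d$-invariant; being also convex and invariant under a cocompact lattice of translations, it must be all of $E$, and an isometry of Euclidean space whose displacement function is constant everywhere is a translation (the identity in the elliptic case). You instead work with the rigid-motion decomposition $h_E(v)=Av+b$ and exploit the commutation relations directly: $A$ must fix every lattice vector $w_c$, and since these vectors span $E$ by cocompactness (Flat Torus Theorem, part (3)), the orthogonal part $A$ is the identity. The two arguments consume the same structural inputs---commutativity of the factors of $G=H\times\Z^d$ and the Flat Torus Theorem---but your linear-algebra version is more elementary and self-contained, and it cleanly sidesteps a point the paper glosses over: the paper's proof opens with the claim that ``any isometry of $E$ which does not fix a point is a translation,'' which is false as stated for $\E^n$ with $n\ge 2$ (glide reflections and screw motions are fixed-point-free non-translations); what the paper actually needs, and what its minset computation implicitly supplies, is that an isometry whose minset is all of $E$ is a translation. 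The paper's approach, in exchange, stays coordinate-free and within the CAT(0) toolkit (convexity and invariance of minsets) used throughout the rest of the section.
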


\begin{proof}
Any isometry of $E$ which does not fix a point is a translation.
Choose any $w\in  H$.
By hypothesis, $\Z^d$ centralizes $w_E$, and by (4) from the Flat Torus Theorem,
$\Min w_E$ is $\Z^d$-invariant.
Since $\Min w_E$ is convex, it follows that \(\Min w_E=E\).
This means that if $w_E$ fixes a point, then \(w_E=\id_E\).
\end{proof}

\subsection{A Slope Vector Function}
For \(g\in G\), we define the \textit{vertical translation of \(g\) in $X$} to be the vector
\[
	\Vtrans(g)=g_E0\in E.
\]
We also define the \textit{horizontal displacement of $g$} by
\[
	\Hdisp(g)
		=
	d_Y(y_0,g_Yy_0).
\]
Since $\Z^d$ acts only in the $E$-coordinate, $\Hdisp(\grouppair{w}{c})=\Hdisp(w)$.
Let $\slopescal$ denote the slope function based at $x_0$ with respect to the splitting $Y\times E$
and write
\(
	\overline X_0
= 
	\slopescal^{-1}([0,\infty)). 
\)
This is just the complement of \(E \cup\partial E\) in $X$.
We define the \textit{slope vector map} \( \slopevec:\overline X_0\to E \)
as follows.  Identify $\overline X_0$ with the collection of nonvertical unit speed geodesics emanating from $x_0$.
For such a geodesic $\gamma=(\alpha,\beta)$ in the domain, we define
\[
	\slopevec(\gamma)
=
	\frac
		{\beta(t)-\beta(s)}
		{d_Y\bigl(\alpha(t),\alpha(s)\bigr)}
\]
where \(s<t\) are in the domain of $\alpha$.

\begin{lemma}
$\slopevec$ is independent of which $s$ and $t$ are chosen and is continuous.
\end{lemma}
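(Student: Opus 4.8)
The plan is to treat the two claims separately, using Lemma \ref{le:coordinategeospeeds} for well-definedness and the template of Proposition \ref{prop:Miscontinuous} for continuity. For independence of $s$ and $t$, I would first invoke Lemma \ref{le:coordinategeospeeds} to record that for a nonvertical unit speed geodesic $\gamma=(\alpha,\beta)$ both coordinate paths are constant speed geodesics, say of speeds $\sigma_1$ and $\sigma_2$, with $\sigma_1>0$ precisely because $\slopescal(\gamma)<\infty$ forces $\alpha$ to be nonconstant. Since $E$ is an isometric copy of $\E^n$, the geodesic $\beta$ is a straight line traversed at constant speed, so $\beta(t)-\beta(s)=(t-s)v$ for a fixed velocity vector $v\in E$ with $|v|=\sigma_2$, while $d_Y(\alpha(s),\alpha(t))=\sigma_1|t-s|$. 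For $s<t$ the defining quotient then collapses to $v/\sigma_1$, which depends only on $\gamma$; as a sanity check, $|\slopevec(\gamma)|=\sigma_2/\sigma_1=\slopescal(\gamma)$.

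For continuity I would imitate the proof of Proposition \ref{prop:Miscontinuous}. Fix $\epsilon>0$, let $\pi_\epsilon\colon\overline X-B_\epsilon(x_0)\to S_\epsilon(x_0)$ be the geodesic retraction (continuous by convexity of the metric), and let $\rho_E\colon X\to E$ be coordinate projection. Evaluating the parameterization-free quotient at $s=0$, $t=\epsilon$ along the unit speed geodesic through a point $z$, and using $\rho_E(x_0)=0$, yields on $\overline X_0-B_\epsilon(x_0)$ the identity
\[
	\slopevec=\frac{\rho_E\circ\pi_\epsilon}{\delta_1\circ\pi_\epsilon},
\]
where $\delta_1(x)=d_Y(\rho_1(x),y_0)$ is exactly the continuous function from Proposition \ref{prop:Miscontinuous}. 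The numerator is a continuous $E$-valued map and the denominator a continuous scalar; the crucial point is that the denominator never vanishes on $\overline X_0$, since $\delta_1(\pi_\epsilon(z))=\sigma_1\epsilon>0$ exactly when the slope is finite. Because $E$ is a finite dimensional normed space, dividing a continuous vector valued function by a continuous nowhere zero scalar is continuous, so $\slopevec$ is continuous on each $\overline X_0-B_\epsilon(x_0)$. Finally, since every point of the domain lies outside some closed ball $\overline{B_\epsilon(x_0)}$ (the basepoint is excluded and ideal points lie outside all such balls), these sets cover $\overline X_0$ and local continuity gives global continuity.

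The main obstacle I anticipate is purely bookkeeping at the endpoints of the argument: confirming that finite slope forces $\sigma_1>0$ (so the denominator is genuinely positive, including at ideal points $\zeta\in\partial X$ where $\gamma$ is a ray), and verifying that the numerator $\beta(t)-\beta(s)$ really is the vector $\rho_E(\pi_\epsilon(z))$ under our identification of $\overline X_0$ with unit speed geodesics emanating from $x_0$. Both reduce to the elementary geometry of $\E^n$ together with Lemma \ref{le:coordinategeospeeds}, so beyond careful handling of the vector valued numerator I expect no genuine difficulty.
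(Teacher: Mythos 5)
Your proposal is correct and follows essentially the same route as the paper: well-definedness comes from the Euclidean structure of $E$ (the paper writes $\beta(t)-\beta(s)=(t-s)\sigma_\beta\unitvec$, which is your velocity vector $v$), and continuity is obtained by rerunning the proof of Proposition \ref{prop:Miscontinuous} with the scalar $\delta_2$ replaced by the $E$-coordinate projection, exactly as the paper indicates. Your extra bookkeeping (the denominator $\delta_1\circ\pi_\epsilon$ is nonvanishing on $\overline X_0$ since finite slope forces $\sigma_1>0$, and the sets $\overline X_0-B_\epsilon(x_0)$ cover the domain) just makes explicit what the paper leaves implicit.
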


\begin{proof}
Let $\sigma_\alpha$ and $\sigma_\beta$ denote the speeds of $\alpha$ and $\beta$.
Observe that
\[
	\beta(t)-\beta(s)=(t-s)\unitvec\sigma_\beta
\]
where $\unitvec\in E$ is unit vector in the direction of $\beta$.
Thus
\(
	\slopevec(\gamma)
=
	\sigma_\beta \unitvec/\sigma_\alpha,
\)
which is independent of $s$ and $t$.  The proof that $\slopevec$ is continuous is the same
as the proof of Proposition \ref{prop:Miscontinuous} but we replace $\delta_2$ with
\(\id_E\).
\end{proof}

Note that \(\|\slopevec\|\) is the same as the slope $\slopescal$ described earlier with respect to the current splitting
where $\|\cdot\|$ denotes the standard Euclidean norm on $E$.

\begin{lemma}
$\slopevec$ extends to a continuous map \((X-\{x_0\})\cup\partial X\to E\cup\partial E\).
Specifically, for \(v\in E\), \(\slopevec(y_0,v)\) is the geodesic ray
emanating from 0 passing through $v$,
and $\slopevec$ is the identity on $\partial E$.
\end{lemma}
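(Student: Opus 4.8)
The plan is to reuse the geodesic-retraction factorization behind Proposition~\ref{prop:Miscontinuous} and the continuity of $\slopevec$ on $\overline X_0$, pushing the denominator to $0$ in a controlled way. Fix $\epsilon>0$ and recall the continuous retraction $\pi_\epsilon\colon (X\cup\partial X)-B_\epsilon(x_0)\to S_\epsilon(x_0)$. Writing $\rho_E,\rho_Y$ for the coordinate projections and $\delta_Y(x)=d_Y(y_0,\rho_Y(x))$, the slope vector on the nonvertical locus is $\slopevec=(\rho_E\circ\pi_\epsilon)/(\delta_Y\circ\pi_\epsilon)$. My first step is to enlarge the target and define a single map $F\colon S_\epsilon(x_0)\to\overline E=E\cup\partial E$ covering the vertical points of the sphere as well: put $F(y,v)=v/d_Y(y_0,y)\in E$ when $y\neq y_0$, and let $F(y_0,v)\in\partial E$ be the ray from $0$ through $v$ when $y=y_0$ (on $S_\epsilon(x_0)$ the condition $y=y_0$ forces $\|v\|=\epsilon\neq 0$, so this is well defined). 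I then claim the desired extension is exactly $\slopevec=F\circ\pi_\epsilon$: on $\overline X_0$ it reproduces the old formula; for a vertical segment $(y_0,v)$ and $\epsilon<\|v\|$ one has $\pi_\epsilon(y_0,v)=(y_0,\epsilon v/\|v\|)$, so $F\circ\pi_\epsilon$ returns the ray through $v$; and for a vertical ray $\xi\in\partial E\subset\partial X$ one has $\pi_\epsilon(\xi)=(y_0,v)$ with $v$ in the direction $\xi$, so $F\circ\pi_\epsilon$ returns $\xi$, i.e.\ the identity on $\partial E$.

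This reformulation reduces everything to continuity of $F$ on $S_\epsilon(x_0)$. Away from the vertical locus ($y\neq y_0$) continuity is immediate, since there $F$ is a quotient of a continuous $E$-valued map by a continuous positive scalar. The crux is continuity of $F$ at a vertical point $(y_0,v_*)$, which is precisely the assertion that the slope vector blows up in a controlled direction. Take $(y_n,v_n)\to(y_0,v_*)$ in $S_\epsilon(x_0)$. If $y_n=y_0$, then $F(y_n,v_n)$ is the ray through $v_n$, which converges to the ray through $v_*$ by continuity of $v\mapsto v/\|v\|$. If $y_n\neq y_0$, then $\|F(y_n,v_n)\|=\|v_n\|/d_Y(y_0,y_n)\to\infty$, since $\|v_n\|\to\|v_*\|=\epsilon>0$ while $d_Y(y_0,y_n)\to 0$, and the direction $F(y_n,v_n)/\|F(y_n,v_n)\|=v_n/\|v_n\|\to v_*/\|v_*\|$. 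By the convergence criterion for the cone topology on $\overline E$ recalled earlier---a sequence in $E$ converges to a boundary point exactly when its distance from $0$ diverges and the rays $[0,\cdot]$ converge---this yields $F(y_n,v_n)\to F(y_0,v_*)$. A general sequence splits into subsequences of these two types, each with the same limit, so $F$ is continuous.

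Finally, $\slopevec=F\circ\pi_\epsilon$ is continuous on the open set $(X-B_\epsilon(x_0))\cup\partial X$; since its value is independent of $\epsilon$ and these sets exhaust $(X-\{x_0\})\cup\partial X$ as $\epsilon\to 0$, continuity on the whole domain follows exactly as in Proposition~\ref{prop:Miscontinuous}. The only genuine obstacle I anticipate is the crux computation above: confirming that a diverging magnitude together with convergent directions is exactly convergence in the cone topology of $\overline E$. Everything else---matching the two prescribed boundary values with $F\circ\pi_\epsilon$ and invoking the already-established continuity of $\pi_\epsilon$ and of the coordinate projections---is routine bookkeeping.
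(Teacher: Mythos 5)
Your proof is correct, and its analytic crux is the same as the paper's: near the vertical locus the magnitude of the slope vector diverges while its direction converges, and in the cone topology on $E\cup\partial E$ this is precisely convergence to the boundary ray. The packaging, however, is genuinely different. The paper argues sequentially via the polar decomposition $\slopevec=\slopescal\,\unitvec$: for a sequence $(x_n)\subset\overline X_0$ converging to $z\in(E-\{0\})\cup\partial E$, it invokes the already-established continuity of $\slopescal$ (valued in $[0,\infty]$) to get $\slopescal(x_n)\to\infty$, notes $\unitvec(x_n)\to\unitvec(z)$, and concludes that $\slopevec(x_n)=\slopescal(x_n)\unitvec(x_n)$ converges to the ray in direction $\unitvec(z)$. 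You instead push the sphere-retraction device of Proposition \ref{prop:Miscontinuous} one step further: you define a single map $F$ on $S_\epsilon(x_0)$ with values in $E\cup\partial E$ (sending vertical sphere points to rays), check that $F\circ\pi_\epsilon$ reproduces both the old formula on $\overline X_0$ and the prescribed boundary values, and then prove continuity of $F$ at vertical points. Your route is slightly longer but more self-contained: realizing the extension as a composition of globally defined continuous maps removes any need for case analysis over mixed sequences (vertical versus nonvertical, interior versus boundary), whereas the paper's sequential argument formally treats only sequences lying in $\overline X_0$ and leaves the remaining easy cases implicit. The paper's route, in turn, is shorter because it reuses the continuity of $\slopescal$ rather than reproving a sphere-level continuity statement from scratch.
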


\begin{proof}
For a point \(x\in X\cup\partial X\) not in $E\cup\partial E$,
define $\unitvec(x)=\slopevec(x)/\slopescal(x)$.  This is the unit vector in the direction of
$\slopevec(x)$.
If a sequence of points \((x_n)\subset\overline X_0\) converges to a point \(z\in(E-0)\cup\partial E\),
then eventually \((x_n)\) misses the subspace $Y\cup\partial Y$.
Then $\unitvec(x_n)\to \unitvec(z)$ and $\slopescal(x_n)\to\infty$.  So $\slopevec(x_n)=\slopescal(x_n)\unitvec(x_n)$ converges to the geodesic ray
in the direction of $\unitvec(z)$.
\end{proof}

As before, $\slopevec$ can be thought of as a continuous function of $G$ using the map \(G\to Gx_0\) provided
we make the arbitrary definition \(\slopevec(g)=0\in E\) for $g$ stabilizing $x_0$.  Note that for
$g=\grouppair{w}{c}\in G$ such that \(\Hdisp(g)>0\), we have
\[
	\slopevec(g)
=
	\frac{\Vtrans(g)}{\Hdisp(w)}
=
	\frac{\Vtrans(w)+\Vtrans(c)}{\Hdisp(w)}.
\]

Ruane proves in \cite{ruane-anglequestion} that whenever a CAT(0) group of the form $H\times\Z$
acts geometrically on a CAT(0) space, then the limset of $H$ is bounded away from the limset of
$\Z$.  Here is an analogous result in this setting.

\begin{prop}
$\slopevec$ is bounded on $H$.
\end{prop}

\begin{proof}
Choose any finite generating set for $H$ and let $l$ be the corresponding length metric.
Let \(\lambda\ge 1\) and \(\epsilon\ge 0\) be such that for all \(w\in  H\),
\(\Hdisp(w)\ge l(w)/\lambda-\epsilon \).
Let $M$ be the maximum of $\|\Vtrans\|$ on a finite
generating set for $ H$.  Then for $w\in H$, $\|\Vtrans(w)\|\le Ml(w)$ and
$\|\slopevec(w)\|\le M\lambda+1$ when $l(w)$ is sufficiently large.
\end{proof}

We close this section by recording for later use a technical lemma which describes sequences
converging to points of $\partial E$.

\begin{lemma}
\label{le:technicalabelianlimset}
Let $(g_n)\subset G$ be a sequence such that $g_nx_0$ converges to a point of $\partial E$,
say $g_n=\grouppair{w_n}{c_n}$.
Then
\[
	\lim_{n\to\infty}
	\frac
		{ d\bigl(x_0,\grouppair{1}{c_n}x_0\bigr) }
		{ d\bigl(x_0,\grouppair{w_n}{1}x_0\bigr) }
			=
	\infty.
\]
\end{lemma}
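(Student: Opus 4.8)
The plan is to translate the stated limit into an inequality about the slope and slope-vector functions and then show that the denominator is swamped by the numerator. Writing $g_n=\grouppair{w_n}{c_n}$ and recalling the coordinate rules $\grouppair{w}{c}_Y=w_Y$, $\grouppair{w}{c}_E=w_Ec_E$ together with the facts that $H$ acts on $E$ by translations and $\Z^d$ acts trivially on $Y$, I would first record the two orbit points explicitly,
\[
	\grouppair{1}{c_n}x_0=(y_0,\Vtrans(c_n)),\qquad
	\grouppair{w_n}{1}x_0=(w_{n,Y}y_0,\Vtrans(w_n)).
\]
Using the $l_2$-product metric on $X=Y\times E$ together with $x_0=(y_0,0)$, this yields
\[
	d(x_0,\grouppair{1}{c_n}x_0)=\|\Vtrans(c_n)\|,\qquad
	d(x_0,\grouppair{w_n}{1}x_0)=\sqrt{\Hdisp(w_n)^2+\|\Vtrans(w_n)\|^2},
\]
so the target is exactly $\|\Vtrans(c_n)\|/\sqrt{\Hdisp(w_n)^2+\|\Vtrans(w_n)\|^2}\to\infty$.

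Next I would extract the two analytic inputs. Since $g_nx_0$ converges to a point $\zeta\in\partial E$, on which $\slopescal\equiv\infty$, continuity of $\slopescal$ on $(X-\{x_0\})\cup\partial X$ (Proposition \ref{prop:Miscontinuous}, extended to $\Lambda$) forces $\slopescal(g_n)\to\infty$. Abbreviating $a_n=\Hdisp(w_n)$ and $V_n=\|\Vtrans(g_n)\|=\|\Vtrans(w_n)+\Vtrans(c_n)\|$, the identity $\slopescal(g_n)=V_n/a_n$ says precisely that $a_n$ is negligible next to $V_n$; combined with $d(x_0,g_nx_0)^2=a_n^2+V_n^2\to\infty$ this also gives $V_n\to\infty$. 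The second input is the boundedness of $\slopevec$ on $H$, which I would use in the affine form $\|\Vtrans(w)\|\le Ca_n+D$ coming from $\|\Vtrans(w)\|\le Ml(w)$ and $\Hdisp(w)\ge l(w)/\lambda-\epsilon$.

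Then I would combine these. Writing $\Vtrans(c_n)=\Vtrans(g_n)-\Vtrans(w_n)$, the reverse triangle inequality gives $\|\Vtrans(c_n)\|\ge V_n-\|\Vtrans(w_n)\|$, and $\sqrt{a_n^2+\|\Vtrans(w_n)\|^2}\le a_n+\|\Vtrans(w_n)\|$. Hence the reciprocal of the target quantity is at most $(a_n+\|\Vtrans(w_n)\|)/(V_n-\|\Vtrans(w_n)\|)$. Substituting $a_n=V_n/\slopescal(g_n)$ and $\|\Vtrans(w_n)\|\le Ca_n+D\le CV_n/\slopescal(g_n)+D$, dividing numerator and denominator through by $V_n$, and letting $n\to\infty$ (using $\slopescal(g_n)\to\infty$ and $V_n\to\infty$) sends this bound to $0$; therefore the original ratio tends to $\infty$.

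The step I expect to be the main obstacle is controlling the $H$-part: the whole argument secretly needs $\Vtrans(w_n)$ to be asymptotically negligible compared with $\Vtrans(g_n)$, and this is exactly where the bounded-slope estimate for $H$ does its work, preventing the vertical drift of $w_n$ from keeping pace with the necessarily vertical escape of $g_n$ to $\partial E$. I would also verify the boundary case $\Hdisp(w_n)=0$, where $[x_0,g_nx_0]$ is already vertical and $\slopescal=\infty$: properness of the $H$-action on $Y$ confines such $w_n$ to a finite set, so $\|\Vtrans(w_n)\|$ stays bounded while $\|\Vtrans(c_n)\|\ge V_n-\|\Vtrans(w_n)\|\to\infty$, keeping the conclusion intact (with the ratio read as $+\infty$ in the wholly degenerate sub-case $\grouppair{w_n}{1}x_0=x_0$).
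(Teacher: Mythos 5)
Your proof is correct and follows essentially the same route as the paper's: both arguments rest on the continuity of the extended slope function (forcing $\slopescal(g_n)\to\infty$), the boundedness of $\slopevec$ on $H$, and the triangle inequality applied to $\Vtrans(c_n)=\Vtrans(g_n)-\Vtrans(w_n)$, differing only in how the final ratio is estimated (you bound the reciprocal, while the paper uses the exact identity $d\bigl(x_0,\grouppair{w_n}{1}x_0\bigr)^2=\Hdisp(w_n)^2\bigl(1+\|\slopevec(w_n)\|^2\bigr)$). Your extra attention to the degenerate case $\Hdisp(w_n)=0$, which the paper's division by $\Hdisp(w_n)$ silently ignores, is a small but genuine refinement rather than a different approach.
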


\begin{proof}
The triangle inequality gives
\[
	\frac{\|\Vtrans(c_n)\|}{\Hdisp(w_n)} \ge \|\slopevec(g_n)\|-\|\slopevec(w_n)\|
\]
The previous two results tell us that $\|\slopevec(g_n)\|\to\infty$ and that $\|\slopevec(w_n)\|$ is bounded.
Therefore
\[
	\frac
		{ d\bigl(x_0,\grouppair{1}{c_n}x_0\bigr)^2 }
		{ d\bigl(x_0,\grouppair{w_n}{1}x_0\bigr)^2 }
=
	\frac
		{\|\Vtrans(c_n)\|^2}
		{\Hdisp(w_n)^2}
		\cdot
	\frac
		{ 1 }
		{ \|\slopevec(w_n)\|^2 + 1 }
			\to
	\infty.
\]
\end{proof}

\section{Cell-Like Limset Maps}
Let \(G=\F_m\times\Z^d\) where $\F_m$ is the free group on $m$ generators where $m\ge 2$.
Assume $G$ acts geometrically on a proper CAT(0) space $X$.
The standard generating set for $G$ is the union of the standard basis for $\Z^d$ along with
$m$ generators for $\F_m$.  The corresponding length metric on $G$
satisfies $l_G(\grouppair{w}{c})=l_{\F_m}(w)+l_{\Z^d}(c)$.
Our proof of Theorem \ref{Thm:CE} here relies heavily on the fact that $\F_m$ is negatively curved.

\subsection{Straight Elements}
Denote $l=l_{\F_m}$.
An element $w\in\F_m$ is called \textit{straight} if it satisfies $l(w^2)=2l(w)$.
Let $\Gamma$ denote the Cayley graph of $\F_m$ with respect to the standard generating set,
an infinite $2m$-valent tree.  Recall that a subspace $A$ of a metric space $B$
is called \textit{quasi-dense} or \textit{$C$-dense} if there is a $C\ge 0$ such that
$B$ is contained in the $C$-neighborhood of $A$.

\begin{lemma}
The following are equivalent for $w\in\F_m$:
\begin{enumerate}
\item $w$ is straight.\\
\item If $a_1...a_n$ is the unique reduced spelling for $w$, then $a_n\neq a_1^{-1}$.\\
\item In the action of $\F_m$ on $\Gamma$, $w$ has an axis passing through the identity.\\
\end{enumerate}
The set of straight elements is 1-dense in $\F_m$ and powers of straight elements are straight.
\end{lemma}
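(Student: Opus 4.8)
The plan is to establish the three conditions are equivalent by proving $(1)\Leftrightarrow(2)$ and $(2)\Leftrightarrow(3)$, and then to treat the two closing assertions separately. Throughout I identify $\F_m$ with the vertex set of $\Gamma$ and use that, since $\Gamma$ is a tree, a path is geodesic exactly when it has no backtracking, and $l(w)=d(1,w)$.

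For $(1)\Leftrightarrow(2)$, write $w=a_1\cdots a_n$ in reduced form and examine $w^2=a_1\cdots a_na_1\cdots a_n$. The only place cancellation can begin is at the junction $a_na_1$. If $a_n\neq a_1^{-1}$ the displayed word is already reduced, so $l(w^2)=2n=2l(w)$; if $a_n=a_1^{-1}$ at least one pair cancels and $l(w^2)<2l(w)$. This yields the equivalence at once, the cases $n=0,1$ being trivial (noting $a_1=a_1^{-1}$ never occurs in a free group).

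The heart of the argument is $(2)\Leftrightarrow(3)$, which matches the algebraic condition with the geometry of the axis, and this is the step I expect to require the most care. For $(2)\Rightarrow(3)$ I would build the bi-infinite edge-path $L$ through the vertices $\{w^k\}_{k\in\Z}$ by concatenating the translates $w^k[1,w]$. Each segment is geodesic because $a_1\cdots a_n$ is reduced, and at the junction vertex $w^k$ the two incident edges of $L$ run to $w^ka_n^{-1}$ and to $w^ka_1$, which are distinct precisely when $a_n\neq a_1^{-1}$; hence $(2)$ makes $L$ backtracking-free, so $L$ is a $w$-invariant geodesic line through $1$, i.e.\ an axis through the identity. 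For the converse $(3)\Rightarrow(2)$ I would argue contrapositively: if the axis passes through $1$ then every point of the axis, in particular $1$, is moved exactly the translation length, so $l(w)=d(1,w\cdot1)=|w|$; but if $a_n=a_1^{-1}$ then $w=a_1w'a_1^{-1}$ with $w'=a_2\cdots a_{n-1}$ and $l(w')=n-2$, and since translation length is a conjugacy invariant while $|w'|\le d(1,w'\cdot1)=l(w')$, we get $|w|=|w'|\le n-2<n=l(w)$, a contradiction. Thus $(3)$ forces $a_n\neq a_1^{-1}$.

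For the density claim, given $w=a_1\cdots a_n$ that is not already straight we have $a_n=a_1^{-1}$, so it suffices to append one generator $b$ with $b\notin\{a_1,a_1^{-1}\}$; such $b$ exists because $m\ge2$ leaves at least $2m\ge4$ letters to choose from. Then $wb$ is reduced (its last junction is $a_nb=a_1^{-1}b$, nontrivial since $b\neq a_1$) and straight (its first letter $a_1$ and last letter $b$ satisfy $a_1\neq b^{-1}$), while $d(w,wb)=1$; since straight elements lie within distance $0$ of themselves, the straight set is $1$-dense. Finally, powers of straight elements are straight: if $a_n\neq a_1^{-1}$ then no cancellation occurs at any junction of $(a_1\cdots a_n)^k$, so $w^k$ has reduced spelling beginning with $a_1$ and ending with $a_n$, whence $(2)$ persists; equivalently, $w^k$ shares the axis of $w$, so $(3)$ is inherited.
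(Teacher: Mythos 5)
Your proof is correct, and its skeleton---word-cancellation for $(1)\Leftrightarrow(2)$, geometry for the axis condition, appending a generator for $1$-density, and junction-by-junction non-cancellation for powers---matches the paper's. The one place you genuinely diverge is the axis equivalence. The paper disposes of $(1)\Leftrightarrow(3)$ in a single line by citing that $\Gamma$ is CAT(0) and that the identity lies in $\Min(w)$ iff $w$ is straight; that is, it leans on general minset machinery (an axis of a hyperbolic isometry passes through a point iff that point realizes the translation length) and leaves the ``iff $w$ is straight'' claim unproved. You instead prove $(2)\Leftrightarrow(3)$ from scratch: for $(2)\Rightarrow(3)$ you assemble the axis explicitly as the concatenation of the translates $w^k[1,w]$ and verify there is no backtracking at the junction vertices, and for $(3)\Rightarrow(2)$ you use conjugacy invariance of translation length to show that a word with $a_n=a_1^{-1}$ satisfies $|w|=|w'|\le n-2<n=l(w)$, so $1$ cannot realize the translation length. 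This is more elementary and self-contained, and in effect supplies the proof of exactly the fact the paper treats as evident; you also prove the closing claim about powers, which the paper asserts without argument. The only blemish, shared with the paper, is the trivial element: for $w=1$ conditions (1) and (2) hold vacuously while (3) fails, since the identity has no axis, so the equivalence should be read for $w\neq 1$.
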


\begin{proof}
Choose $w\in\F_m$ and let $a_1...a_n$ be the reduced
spelling for $w$.  Saying that $w$ is straight means that the word $a_1...a_na_1...a_n$ has no
reductions.  Since $\F_m$ is free, this means that $a_n\neq a_1^{-1}$.  Thus (1) and (2) are equivalent.
To see that (1) and (3) are equivalent simply note that $\Gamma$ is CAT(0) and the identity is in the
minset of $w$ iff $w$ is straight.

Now we show that the set of straight elements is 1-dense.  Suppose $w$ is not straight.  Let
$w=a_1...a_n$ be a spelling in terms of the generating set.
Since we assumed $m>1$, there is a letter $\epsilon$ which is neither $a_1$ nor its inverse.
Since $a_1=a_n^{-1}$, $w\epsilon$ is straight.
\end{proof}

Elements $\grouppair{w}{c}\in G$ will be called \textit{straight} if $w$ is straight.
The reason for considering straight elements is the following.

\begin{lemma}
\label{lemma:axesthrucompactsets}
There is a compact set $D\subset X$ such that every straight element of $G$ has an
axis in $X$ which passes through $D$.
\end{lemma}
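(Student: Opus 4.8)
The plan is to reduce the statement to a corresponding fact about axes in the factor $Y$ and then to exploit the fact that the free factor forces $Y$ to be hyperbolic. Write a straight element as $g=\grouppair{w}{c}$, so that $g=(g_Y,g_E)$ with $g_Y=w_Y$ and $g_E=w_Ec_E$. Since $\Z^d$ and, by the preceding lemma, $\F_m$ both act on $E\cong\E^d$ by translations, $g_E$ is a translation of Euclidean space, whence $\Min_E(g_E)=E$; Proposition \ref{prop:minsetproduct} then gives $\Min_X(g)=\Min_Y(w_Y)\times E$. For $w\neq 1$ the isometry $w_Y$ is hyperbolic: the projected $\F_m$-action on $Y$ is geometric, so $w_Y$ is semi-simple, and it cannot be elliptic, since an infinite cyclic group fixing a point would violate properness. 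Writing $\Min_Y(w_Y)=C\times\R$ as in the splitting of the minset of a hyperbolic isometry, $g$ acts on $\Min_X(g)=C\times\R\times E$ as a translation of the flat factor $\R\times E$ (by the vector $(|w_Y|,\,g_E0)$) while fixing $C$; hence through any point of $\Min_X(g)$ there is an axis of $g$ in that translation direction. It therefore suffices to produce a single constant $R$ such that for every straight $w$ some axis of $w_Y$ meets $\overline B_Y(y_0,R)$: one then takes $D=\overline B_Y(y_0,R)\times\{0\}$, which is compact since $X$, and hence $Y$, is proper, and runs the axis of $g$ through the point $(p,0)$ for any $p$ in that intersection.

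The essential input is that $Y$ is hyperbolic. Because $\F_m$ acts geometrically on $Y$, the \v{S}varc--Milnor lemma furnishes a quasi-isometry $\rho\colon\Gamma\to Y$, namely the equivariantly extended orbit map $w\mapsto w_Yy_0$, with $\rho(e)=y_0$; as the tree $\Gamma$ is $0$-hyperbolic and hyperbolicity is a quasi-isometry invariant among geodesic spaces, $Y$ is $\delta$-hyperbolic for some $\delta$. Now fix a straight $w$. By the straightness lemma, $w$ has an axis $L_w$ in $\Gamma$ through the identity vertex $e$; parametrize it as a geodesic line $\gamma\colon\R\to\Gamma$ with $\gamma(0)=e$ and $w^ne=\gamma(n\,l(w))$. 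Then $\rho\circ\gamma$ is a quasigeodesic line in $Y$ whose quasi-isometry constants are those of $\rho$ and hence \emph{independent of $w$}; it passes through $\rho(\gamma(0))=y_0$, and since $\rho$ extends to the boundary homeomorphism $\partial\Gamma\to\partial Y$ carrying $w^{\pm\infty}$ to the fixed points of $w_Y$, its two ends converge to the endpoints of an axis $\alpha$ of $w_Y$.

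At this point the Morse lemma for $\delta$-hyperbolic spaces closes the argument. Since $\rho\circ\gamma$ and $\alpha$ are, respectively, a quasigeodesic line and a geodesic line sharing the same pair of endpoints at infinity, they lie within Hausdorff distance $R=R(\lambda,\epsilon,\delta)$ of one another, where $(\lambda,\epsilon)$ are the constants of $\rho$. As $R$ depends only on $\rho$ and $\delta$, it is uniform over all straight $w$. Because $y_0\in\rho\circ\gamma$, the axis $\alpha$ meets $\overline B_Y(y_0,R)$; choosing $p\in\alpha\cap\overline B_Y(y_0,R)\subseteq\Min_Y(w_Y)$ and passing the axis of $g$ through $(p,0)$ as in the first paragraph completes the proof, with $D=\overline B_Y(y_0,R)\times\{0\}$. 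The degenerate case $w=1$ is immediate, since then $\Min_X(g)=X$ and any axis through $(y_0,0)$ already lies through $D$.

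The one genuinely delicate point is the \emph{uniformity} in the middle paragraphs: a straight element can have arbitrarily large translation length and arbitrarily long axis, so a naive attempt to control $d(y_0,\Min_Y(w_Y))$ by the displacement gap $d(y_0,w_Yy_0)-|w_Y|$ is unreliable, precisely because that heuristic degenerates in flat (non-hyperbolic) directions of a general CAT(0) space. What rescues the argument is that the free factor forces $Y$ to be genuinely $\delta$-hyperbolic, so a single Morse constant governs every straight element at once; verifying this hyperbolicity and checking that the quasigeodesics $\rho\circ\gamma$ carry quasi-isometry constants fixed independently of $w$ is where the real content lies, whereas the passage back from $Y$ to $X=Y\times E$ is routine given Proposition \ref{prop:minsetproduct}.
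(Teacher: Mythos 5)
Your proof is correct and follows essentially the same route as the paper's: both use straightness to obtain an axis through the identity in the tree $\Gamma$, push it into $Y$ via the QIE and the Stability of Quasi-Geodesics Theorem so that an axis of $w_Y$ passes uniformly close to $y_0$, and then lift to $X=Y\times E$ using Proposition \ref{prop:minsetproduct}. The only difference is technical: where the paper applies stability to the finite segments $[w^{-n}y_0,w^ny_0]$ and lets $n\to\infty$, you apply the bi-infinite Morse lemma to the image of the whole axis and an axis of $w_Y$ sharing its endpoints at infinity---if anything a slightly more careful treatment of the same step.
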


Recall that quasi-geodesics in negatively curved spaces behave well \cite[Theorem III.H.1.7]{bridson-haefliger}:

\begin{Theorem}[Stability of Quasi-Geodesics]
For all $\delta>0$, $\lambda\ge 1$, $\epsilon\ge 0$ there exists a constant
$R=R(\delta,\lambda,\epsilon)$ with the following property:

If $X$ is a $\delta$-hyperbolic space, $c$ is a $(\lambda,\epsilon)$-quasi-geodesic
in $X$ and $[p,q]$ is a geodesic segment joining the endpoints of $c$, then the
Hausdorff distance between $[p,q]$ and the image of $c$ is less than $R$.
\end{Theorem}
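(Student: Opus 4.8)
The final statement is the Morse lemma (stability of quasi-geodesics), so the plan is to run the classical two-part scheme: bound the distance from points of $[p,q]$ to $\im c$, then bound the reverse excursions of $c$ away from $[p,q]$, and absorb every error into a single constant $R(\delta,\lambda,\epsilon)$. First I would \emph{tame} the quasi-geodesic. Given a $(\lambda,\epsilon)$-quasi-geodesic $c\colon[a,b]\to X$, replace it by the path $c'$ agreeing with $c$ at the endpoints and at integer parameters and interpolating by geodesics in between. A routine check shows $c'$ is again a $(\lambda,\epsilon')$-quasi-geodesic with $\epsilon'=\epsilon'(\lambda,\epsilon)$, its image lies within Hausdorff distance $\lambda+\epsilon$ of $\im c$, and crucially its length is controlled linearly, $\ell(c'|_{[s,t]})\le K_1|t-s|+K_2$. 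Since passing from $c$ to $c'$ changes all distances boundedly, it suffices to prove the theorem for the continuous, rectifiable path $c'$ and fold the extra $\lambda+\epsilon$ into $R$.

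Next I would establish the key geometric input, a logarithmic distance estimate: for any continuous rectifiable path $\sigma$ in a $\delta$-hyperbolic space with endpoints $p,q$ and any $x\in[p,q]$,
\[
 d(x,\im\sigma)\le\delta\log_2\ell(\sigma)+1 .
\]
The proof is a dyadic subdivision. Split $\sigma$ at its length-midpoint image $m$, form the geodesic triangle on $p,q,m$, and use $\delta$-thinness to move $x$ within $\delta$ onto $[p,m]$ or $[m,q]$; then recurse on the corresponding half of $\sigma$. Each step halves the length and costs $\delta$, so after about $\log_2\ell(\sigma)$ steps the surviving sub-path has length $\le 1$ and $x$ is within $1$ of one of its endpoints, which lies on $\im\sigma$.

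The crux is to remove the length-dependence and bound $D=\sup_{x\in[p,q]}d(x,\im c')$ by a constant. Take $x_0\in[p,q]$ nearly realizing $D$, pick $y,z\in[p,q]$ on opposite sides of $x_0$ at distance $2D$ (or the endpoints $p,q$ when closer, with an easy modification near the ends), choose nearest points $c'(s),c'(t)$ to $y,z$, and form the comparison path $\Sigma$ running $[y,c'(s)]$, then $c'|_{[s,t]}$, then $[c'(t),z]$. On one hand $x_0$ lies at distance $\ge D$ from every part of $\Sigma$: it is $\ge D$ from $\im c'$ by choice of $D$, and $\ge 2D-D=D$ from the two short spurs, which stay within $D$ of $y$ or $z$. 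On the other hand the quasi-geodesic lower bound gives $|t-s|\le\lambda\bigl(d(c'(s),c'(t))+\epsilon'\bigr)$ with $d(c'(s),c'(t))\le 6D$, so the taming length control yields $\ell(\Sigma)\le K_1'D+K_2'$. Feeding $\Sigma$ into the logarithmic estimate (its chord is $[y,z]\ni x_0$) forces
\[
 D\le\delta\log_2\!\bigl(K_1'D+K_2'\bigr)+1 ,
\]
and since the left side is linear while the right side is logarithmic in $D$, this can hold only for $D\le R_0(\delta,\lambda,\epsilon)$. I expect this balancing of linear length growth against logarithmic distance to be the main obstacle: it is exactly here that taming must deliver genuine linear length control and that the quasi-geodesic \emph{upper} bound is indispensable.

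Finally I would deduce the reverse inclusion from the containment $[p,q]\subseteq N_{R_0}(\im c')$ just proved. If some $c'(r_0)$ lies at distance $>R_0$ from $[p,q]$, let $(u,v)\ni r_0$ be the maximal parameter interval on which $c'$ stays outside $N_{R_0}([p,q])$; then $c'(u),c'(v)$ sit at distance exactly $R_0$ from $[p,q]$, with nearest points $w_u,w_v$. Walking along the subsegment $[w_u,w_v]$ of $[p,q]$, each point is within $R_0$ of $\im c'$, but by maximality of $(u,v)$ only of parameters outside $(u,v)$; at a point where the nearest parameter jumps across the gap one finds $c'(s_1),c'(s_2)$ with $s_1\le u\le v\le s_2$ and $d(c'(s_1),c'(s_2))\le 2R_0$, whence the quasi-geodesic lower bound gives $v-u\le s_2-s_1\le\lambda(2R_0+\epsilon')$. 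Taming length control then bounds $\ell(c'|_{[u,v]})$, so $c'(r_0)$ is a bounded distance from $c'(u)$ and hence from $[p,q]$. Combining the two uniform neighborhood inclusions and adding back the $\lambda+\epsilon$ from taming produces the desired $R=R(\delta,\lambda,\epsilon)$.
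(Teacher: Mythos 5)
Your proposal is correct, and it is essentially the standard argument: the paper offers no proof of this statement at all, citing it as Theorem III.H.1.7 of Bridson--Haefliger, and your three-step scheme (taming the quasi-geodesic, the logarithmic distance estimate via dyadic subdivision, then balancing linear length growth against the logarithmic bound, followed by the connectedness argument for the reverse inclusion) is precisely the proof given in that reference. Nothing further is needed.
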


\begin{proof}[Proof of Lemma \ref{lemma:axesthrucompactsets}]
Let $f:\Gamma\to Y$ denote the QIE.
By the Stability of Quasi-Geodesics Theorem,
there is an $R\ge 0$ such that for every geodesic $\gamma$ in $\Gamma$, $f(\gamma)$
tracks within a Hausdorff distance of $R$ from the geodesic in $Y$ between its endpoints.
In particular, if $\gamma$ begins at a vertex $w$ and ends at a vertex $w'$,
then $f(\gamma)$ is within a Hausdorff distance of $R$ from
$[y_0,wy_0]$.  Now if $w$ is straight, then for every $n$ the geodesic in $\Gamma$ from $w^{-n}$
to $w^n$ passes through the identity.  It follows that the geodesic $[w^{-n}y_0,w^ny_0]$
passes within a distance of $R$ from $y_0$.  Letting $n\to\infty$, we see that every axis for
$w$ passes through $K=\overline{B_R(y_0)}$.  By Proposition \ref{prop:minsetproduct},
if $z\in K$ is in the minset of $w$, then $(z,0)$ is in the minset of $\grouppair{w}{c}$
for any $c\in\Z^d$.  Therefore every straight element of $G$ has an axis passing
through $K\times\{0\}$.
\end{proof}

Recall that two sequences of real numbers $(x_n)$ and $(y_n)$ are called \textit{asymptotic} if
their ratio $x_n/y_n$ converges to 1 as $n\to\infty$ and is written $x_n\sim y_n$.

\begin{lemma}
\label{lemma:reasonforwebbings}
Let $(g_n)\subset G$ be a sequence of straight elements such that $g_nx_0\to\zeta\in\partial X$
and $(k_n)$ be any sequence of positive integers.
Then \(g_n^{k_n}x_0\to\zeta\) as well, and \(d(x_0,g_n^{k_n}x_0)\sim k_nd(x_0,g_nx_0)\).
\end{lemma}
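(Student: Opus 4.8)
The plan is to exploit the axis structure of straight elements. Since $d(x_0,g_nx_0)\to\infty$ forces $|g_n|\to\infty$ (verified below), I may pass to the tail where each $g_n$ is hyperbolic and invoke Lemma~\ref{lemma:axesthrucompactsets} to fix, for each $n$, an axis $A_n$ of $g_n$ meeting the compact set $D$. Writing $p_n=\proj_{A_n}(x_0)$ and $R=\sup_{z\in D}d(x_0,z)$ (finite by compactness of $D$), the fact that $A_n$ meets $D$ gives the uniform bound $\epsilon_n:=d(x_0,p_n)\le R$. Because $g_n$ translates $A_n$ by $|g_n|$, one has $d(p_n,g_n^{k}p_n)=k|g_n|$ for every positive integer $k$; since nearest-point projection onto the convex set $A_n$ is $1$-Lipschitz and sends $g_n^{k}x_0$ to $g_n^{k}p_n$, the triangle inequality sandwiches
\[
  k|g_n|\ \le\ d(x_0,g_n^{k}x_0)\ \le\ k|g_n|+2\epsilon_n\ \le\ k|g_n|+2R .
\]
Taking $k=1$ gives $|g_n|\ge d(x_0,g_nx_0)-2R\to\infty$; feeding $k=k_n$ and $k=1$ into this estimate and forming $d(x_0,g_n^{k_n}x_0)/\bigl(k_n\,d(x_0,g_nx_0)\bigr)$ traps it between $|g_n|/(|g_n|+2R)$ and $1+2R/(k_n|g_n|)$, both tending to $1$. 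This proves $d(x_0,g_n^{k_n}x_0)\sim k_n\,d(x_0,g_nx_0)$.

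For the convergence $g_n^{k_n}x_0\to\zeta$, I would control the \emph{angle} at $x_0$ between the geodesics $\gamma_n=[x_0,g_nx_0]$ and $\eta_n=[x_0,g_n^{k_n}x_0]$, rather than their mutual distance. Assume $k_n\ge 2$ (the case $k_n=1$ being the hypothesis itself). With $L_n=d(x_0,g_nx_0)$, $M_n=d(x_0,g_n^{k_n}x_0)$, and $c_n=d(g_nx_0,g_n^{k_n}x_0)=d(x_0,g_n^{k_n-1}x_0)$, the displayed bound applied to the straight power $g_n^{k_n-1}$ (whose axis is again $A_n$) gives $c_n\in[(k_n-1)|g_n|,(k_n-1)|g_n|+2R]$. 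Thus $L_n$, $M_n$, $c_n$ equal their main terms $|g_n|$, $k_n|g_n|$, $(k_n-1)|g_n|$ up to additive error in $[0,2R]$, so the Euclidean comparison angle $\bar\theta_n$ at $x_0$ satisfies
\[
  \cos\bar\theta_n=\frac{L_n^2+M_n^2-c_n^2}{2L_nM_n}\longrightarrow 1 ,
\]
since the leading terms of numerator and denominator are both $2k_n|g_n|^2$ and every correction is of strictly lower order once $|g_n|\to\infty$. As $X$ is CAT(0), the Alexandrov angle is at most $\bar\theta_n$, so for each fixed arc-length $K\ge 0$ the comparison inequality yields $d(\gamma_n(K),\eta_n(K))\le 2K\sin(\bar\theta_n/2)\to 0$. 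Combined with $\gamma_n(K)\to\alpha(K)$ (the meaning of $g_nx_0\to\zeta$, where $\alpha$ is the ray to $\zeta$) and $d(x_0,g_n^{k_n}x_0)\to\infty$, the cone-topology convergence criterion recalled in the Preliminaries gives $g_n^{k_n}x_0\to\zeta$.

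The delicate step is precisely this convergence, and the reason is instructive. The two geodesics $\gamma_n$ and $\eta_n$ only fellow-travel within the fixed constant $2R$: each tracks the common axis $A_n$, but $x_0$ sits a bounded distance $\epsilon_n\le R$ off that axis, so a naive fellow-traveling argument would merely place $g_n^{k_n}x_0$ within bounded distance of the ray to $\zeta$ and could not pin down the limit point. The fix is to measure the angle at $x_0$ instead of the transverse distance: the triangle $x_0\,(g_nx_0)\,(g_n^{k_n}x_0)$ is almost degenerate, its side-defect $L_n+c_n-M_n\le 4R$ staying bounded while every side grows like $|g_n|\to\infty$, which forces the angle at $x_0$ to $0$ even though the bounded transverse error never vanishes. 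The only genuinely quantitative point is that this angle decay is uniform in the auxiliary integers $k_n$, and that is exactly what the computation $\cos\bar\theta_n\to 1$ delivers, since the bounded corrections are divided by the growing quantities $k_n|g_n|$ and $|g_n|$.
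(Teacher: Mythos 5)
Your proof is correct. Its first half is essentially the paper's: both invoke Lemma \ref{lemma:axesthrucompactsets}, place $x_0$ within a uniform distance $R$ of an axis of $g_n$ (the paper picks a point $x_n$ of the axis in $B_R(x_0)$ where you use the projection $p_n$), and derive the same sandwich $k|g_n|\le d(x_0,g_n^k x_0)\le k|g_n|+2R$, from which the length asymptotic follows identically. Where you genuinely diverge is the convergence $g_n^{k_n}x_0\to\zeta$. The paper argues by fellow-traveling: both geodesics $[x_0,g_nx_0]$ and $[x_0,g_n^{k_n}x_0]$ lie in the $R$-neighborhood of the common axis, so $g_nx_0$ lies within $2R$ of $[x_0,g_n^{k_n}x_0]$, hence the long geodesic tracks the ray $\alpha$ within $2R+1$ for times tending to infinity, which forces cone-topology convergence. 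You instead run a law-of-cosines computation on the comparison triangle with vertices $x_0$, $g_nx_0$, $g_n^{k_n}x_0$, show the comparison angle at $x_0$ goes to $0$ uniformly in $k_n$, and finish with the CAT(0) comparison inequality $d\bigl(\gamma_n(K),\eta_n(K)\bigr)\le 2K\sin(\bar\theta_n/2)$. Your route is more quantitative: it makes the uniformity in $k_n$ explicit and replaces the paper's soft (and unproved, though standard) step that bounded tracking for unbounded times implies convergence; the paper's route is shorter and stays closer to the geometric picture of the shared axis. One presentational nit: you justify passing to the tail of hyperbolic $g_n$ by ``$|g_n|\to\infty$, verified below,'' but that verification already uses the axis; the clean fix is to note that Lemma \ref{lemma:axesthrucompactsets} supplies the axis for every straight element (equivalently, that nontrivial elements of the torsion-free group $G$ acting geometrically are hyperbolic), so no circularity is needed.
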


\begin{proof}
Let $\alpha$ be the geodesic ray based at $x_0$ going out to $\zeta$ and
$R>0$ be large enough so that every straight element of $G$ has an axis passing through $B_R(x_0)$.
For each $n$, let \(x_n\in B_n(x_0,R)\) lie in the axis of $g_n$.
Then the geodesics $[x_0,g_n^{k_n}x_0]$ and $[x_0,g_nx_0]$ both stay inside the $R$-tubular neighborhood of
the axis of $g_n$ passing through $x_n$.  It follows that $g_nx_0$ lies inside in the $2R$-neighborhood of
$[x_0,g_n^{k_n}x_0]$.  Thus if $[x_0,g_nx_0]$ stays inside the $1$-neighborhood of $\alpha$ up to time
$T$, then $[x_0,g_n^{k_n}x_0]$ stays inside the $(2R+1)$-neighborhood of $\alpha$ up to time $T$.
It follows that $g_n^{k_n}\to\zeta$.

Now denote $a_n=d(x_0,g_nx_0)$,
$a_n'=d(x_0,g_n^{k_n}x_0)$, $b_n=d(x_n,g_nx_n)$, and $b_n'=d(x_n,g_n^{k_n}x_n)$.  Then
\(b_n\le a_n\le b_n+2R\), \(b_n'\le a_n'\le b_n'+2R\), $b_n'=k_nb_n$, and hence
\[
	\frac{a_n'}{a_n}
		\sim
	\frac{b_n'}{b_n}
		=
	k_n.
\]
Therefore $a_n'\sim k_na_n$ as desired.
\end{proof}

\subsection{Averaging Sequences}
Let \(G=\F_m\times\Z^d\) act geometrically on CAT(0) spaces $X_1$ and $X_2$.
If $m=1$, then $G$ is free abelian and Theorem $\ref{Thm:CE}$ follows from Corollary \ref{co:FTT}.
So we will assume $m>1$.
As before we may assume each space splits as
\(X_i=Y_i\times E_i\) where \(E_i\) is an isometric copy of $\E^d$.
Let $\Vtrans_i$ and $\slopevec_i$ denote the vertical translation and slope vector functions for each $X_i$.
We will denote horizontal displacement in $Y_i$ using $\Hdisp_i$.
Let \(y_i\) be chosen basepoints in $Y_i$, \(x_i=(y_i,0)\), and \( x_0=(x_1,x_2)\).
Given $g\in G$, we will let $\slopescal(g)$ denote the slope of the line segment
$[x_0,g^\Delta x_0]$ in terms of the splitting $ X=X_1\times X_2$.

A fourth piece of information we will need regards slopes in the product $Y_1\times Y_2$.
Given $g\in G$ we consider the geodesic $[x_0,g^\Delta x_0]$ as living in
$X=Y_1\times E_1\times Y_2\times E_2$
and take its coordinate projection $\gamma$ to $Y_1\times Y_2$.
If $\gamma$ is nonconstant, it has a well defined slope which we denote by $\nutoo(g)$.
In other words,
\[
	\nutoo(g)=\frac{\Hdisp_2(g)}{\Hdisp_1(g)}.
\]
This makes sense as long as $g$ does not act only in the $E_1\times E_2$ coordinates.
Since $\F_m$ is torsion free, this is the same as saying that when $g=\grouppair{w}{c}$,
then $w\neq 1$.  Then
\(
	\nutoo(g)=\nutoo(w)
\)
which, by Lemma \ref{lemma:limDGbdd}, is bounded away from 0 and $\infty$.
$\nutoo$ extends continuously to $\partial X-\partial(E_1\times E_2)$.

Denote \(\Lambda=\limset(X,G^\Delta)\) and \(\Sigma=\limset( X,(\Z^d)^\Delta)\)
and let $\phi_i:\Lambda\to\limset(X_i,G)$ denote the limset maps constructed in
Section \ref{sec:schmear}.  The following lemma is needed in light of
Example \ref{ex:limsetproblem}.

\begin{lemma}
\label{lemma:Sigmahomeo}
For each $i$, \(\phi_i^{-1}(\partial E_i)=\Sigma\) and $\phi_i|_{\partial E_i}$ is a homeomorphism.
\end{lemma}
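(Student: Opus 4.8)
The plan is to treat the case $i=1$ in detail, the case $i=2$ being symmetric, and to establish $\phi_1^{-1}(\partial E_1)=\Sigma$ by proving two inclusions before deducing the homeomorphism from the Corollary to the Flat Torus Theorem. The inclusion $\Sigma\subseteq\phi_1^{-1}(\partial E_1)$ is the easy half. Since $\Sigma=\Lambda(\Z^d)$ is the limset of the quasi-isometrically embedded direct factor $\Z^d\le G$, Lemma \ref{lemma:subgroups} already shows that $\phi_1$ restricts to a map $\Sigma\to\partial E_1$; in particular $\Sigma\subseteq\phi_1^{-1}(\partial E_1)$. (Concretely, a point of $\Sigma$ is a limit of $\grouppair{1}{c_n}^\Delta x_0$, and because $\Z^d$ acts trivially in the $Y_1$-coordinate these projections remain in the flat $\{y_1\}\times E_1$, so their limit lies in $\partial E_1$.)

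The reverse inclusion $\phi_1^{-1}(\partial E_1)\subseteq\Sigma$ is where the real work lies; Example \ref{ex:limsetproblem} shows that such a preimage can genuinely be larger than the subgroup limset for other subgroups, so this step must exploit special features of the splitting. Given $\zeta\in\Lambda$ with $\phi_1(\zeta)\in\partial E_1$, I would write $\zeta=\lim g_n^\Delta x_0$ with $g_n=\grouppair{w_n}{c_n}$ and first extract a rate estimate: since $\phi_1(\zeta)=\lim g_nx_1\in\partial E_1$, Lemma \ref{le:technicalabelianlimset} applied in $X_1$ gives $\Hdisp_1(w_n)=o(\|\Vtrans_1(c_n)\|)$, so the free part of $g_n$ becomes horizontally negligible next to the abelian part. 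I would then compare $g_n^\Delta x_0$ with $\grouppair{1}{c_n}^\Delta x_0$ coordinatewise in $X=Y_1\times E_1\times Y_2\times E_2$, bounding all four discrepancy terms with three inputs: boundedness of $\nutoo$ on $\Lambda$ (so $\Hdisp_2(w_n)$ has bounded ratio with $\Hdisp_1(w_n)$); the fact that the $\Z^d$-orbit maps into the Euclidean flats $E_1$ and $E_2$ are quasi-isometries (so $\|\Vtrans_1(c_n)\|$ and $\|\Vtrans_2(c_n)\|$ are comparable); and boundedness of $\slopevec_i$ on $\F_m$ (so $\|\Vtrans_i(w_n)\|$ is bounded by a multiple of $\Hdisp_i(w_n)$). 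Together these yield $d(g_n^\Delta x_0,\grouppair{1}{c_n}^\Delta x_0)=O(\Hdisp_1(w_n))=o(d(x_0,g_n^\Delta x_0))$.

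The closing step of this inclusion is to upgrade that sublinear closeness to convergence to the \emph{same} boundary point, giving $\grouppair{1}{c_n}^\Delta x_0\to\zeta$ and hence $\zeta\in\Sigma$. Here I would apply convexity of the metric to the two geodesics $[x_0,g_n^\Delta x_0]$ and $[x_0,\grouppair{1}{c_n}^\Delta x_0]$: because they issue from the common basepoint $x_0$, convexity bounds their fellow-traveling at any fixed time by (that time)$/$(length) times the terminal distance, which tends to $0$; a short reparameterization then shows both geodesics track the ray to $\zeta$ on every compact set. This is the main obstacle, since sublinear fellow-traveling of two arbitrary sequences need not force a common endpoint in a general CAT(0) space; the argument succeeds only because the comparison is routed through geodesics emanating from the single basepoint $x_0$, exactly the setting in which the convexity inequality of the preliminaries applies.

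For the homeomorphism statement, observe that $(\Z^d)^\Delta$ acts on $X$ and $\Z^d$ acts on $X_1$ by semi-simple isometries (translations on the Euclidean factors, with the product case handled by Proposition \ref{prop:minsetproduct}). By Lemma \ref{lemma:subgroups} the restriction $\phi_1|_\Sigma$ is a limset map $\Sigma=\limset(X,\Z^d)\to\partial E_1=\limset(X_1,\Z^d)$, while Corollary \ref{co:FTT} provides a limset map between these same two limsets that is a homeomorphism. Since limset maps between a fixed pair are unique, $\phi_1|_\Sigma$ coincides with that homeomorphism, completing the proof.
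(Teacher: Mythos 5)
Your proposal is correct and matches the paper's proof in all essentials: both reduce to the single inclusion $\phi_i^{-1}(\partial E_i)\subset\Sigma$ via Lemma \ref{lemma:subgroups}, use Lemma \ref{le:technicalabelianlimset} to make the free part of $g_n=\grouppair{w_n}{c_n}$ negligible, apply convexity of the metric to the geodesics from $x_0$ to conclude $\grouppair{1}{c_n}^\Delta x_0\to\zeta$, and obtain the homeomorphism from Corollary \ref{co:FTT} together with uniqueness of limset maps. The only difference is bookkeeping: where the paper gets the needed sublinearity in one line from the QIE $Gx_1\to Gx_0$, you derive the same estimate coordinatewise from boundedness of $\nutoo$ and of $\slopevec_i$ on $\F_m$.
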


\begin{proof}
By Lemma \ref{lemma:subgroups}, it suffices to prove that \(\phi_i^{-1}(\partial E_i)\subset\Sigma\) for each $i$.
We will prove this for $i=1$.
Choose $\zeta_1\in\partial E_1$, and \(\zeta\in\phi_1^{-1}(\zeta_1)\).  This means that there is
a sequence \((g_n)\subset G\) such that \(g_n^\Delta x_0\to\zeta\) and \(g_nx_1\to\zeta_1\).
Write \(g_n=\grouppair{w_n}{c_n}\).  Let $\lambda\ge 1$ and $\epsilon\ge 0$ be such that
the map $Gx_1\to Gx_0$ is a $(\lambda,\epsilon)$-QIE.
Then
\[
	\frac
		{ d\bigl(x_0,\grouppair{1}{c_n}^\Delta x_0\bigr) }
		{ d\bigl(x_0,\grouppair{w_n}{1}^\Delta x_0\bigr) }
			\ge
	\frac
		{ \frac1\lambda d_1\bigl(x_1,\grouppair{1}{c_n}x_1\bigr) - \epsilon }
		{ \lambda d_1\bigl(x_1,\grouppair{w_n}{1}x_1\bigr) + \epsilon }
	\to\infty
\]
by Lemma \ref{le:technicalabelianlimset}.
Apply convexity of the metric to the geodesics $[x_0,\grouppair{1}{c_n}^\Delta x_0]$
and $[x_0,\grouppair{w_n}{c_n}^\Delta x_0]$ to see that $\grouppair{1}{c_n}^\Delta x_0$ also
converges to $\zeta$.  So $\zeta\in\Sigma$, as desired.
The second fact is Corollary \ref{co:FTT}
\end{proof}

This lemma tells us that preimages of points in $\partial E_i$ are singletons,
and therefore cell-like.  Next we turn our attention to preimages of points
not in $\partial E_i$.
Now,
\[
	\partial X-\partial X_1 \approx (C^o\partial X_1)\times\partial X_2
\]
where $C^o\partial X_1$ denotes the \textit{open cone} on $\partial X_1$:
\(C^o\partial X_1=\partial X_1\ast\{x_0\}-\partial X_1\).
So for each $i$, we have
\begin{align*}
	\partial X_i-\partial E_i
\approx
	\partial Y_i\times C^o\partial E_i \\
\approx
	\partial Y_i\times E_i.
\end{align*}
The first coordinate of this homeomorphism is the extension of the projection
map and the second coordinate is \(\slopevec_i\circ\phi_i\).
Define \(\Lambda^0=\Lambda-\Sigma\), which, by the above reasoning, can be thought of as living in
$\partial Y_1\times E_1\times\partial Y_2\times E_2$.

\begin{prop}
\label{prop:bigcommutingdiagram}
The following diagram commutes:
\[
\begin{array}{ccccc}
	&& \Lambda^0 \\
	& \swarrow && \searrow \\
	\partial Y_1\times E_1 &&&& \partial Y_2\times E_2 \\
	\downarrow              &&&& \downarrow \\
	\partial Y_1            &&&& \partial Y_2 \\
	& \searrow && \swarrow \\
	&& \partial \F_m
\end{array}
\]
\end{prop}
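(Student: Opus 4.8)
The plan is to chase a single point $\zeta\in\Lambda^0$ through both the left and right legs of the diagram and to show that both routes record the same datum: the limit in $\partial\F_m$ of the free-group coordinates of an approximating sequence. First I would pin down each arrow. The two upper diagonal maps are the schmear maps $\phi_1,\phi_2$ of Section \ref{sec:schmear}, read through the homeomorphisms $\partial X_i-\partial E_i\approx\partial Y_i\times E_i$ established above; the two vertical maps are the projections onto the $\partial Y_i$ factor; and the two lower diagonal maps are limset homeomorphisms $p_i\colon\partial Y_i\to\partial\F_m$, which exist because the projected action of $\F_m$ on each $Y_i$ is geometric and $\F_m$, being negatively curved, is strongly rigid.

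Next I would reduce to sequences. Given $\zeta\in\Lambda^0$, choose $(g_n)\subset G$ with $g_n^\Delta x_0\to\zeta$ and write $g_n=\grouppair{w_n}{c_n}$. Since $\phi_i$ is a limset map, $g_nx_i\to\phi_i(\zeta)$ in $\partial X_i$; and because $\zeta\in\Lambda^0=\Lambda-\Sigma$ while $\Sigma=\phi_i^{-1}(\partial E_i)$ by Lemma \ref{lemma:Sigmahomeo}, the limit $\phi_i(\zeta)$ avoids $\partial E_i$, so it genuinely has a first coordinate $\eta_i\in\partial Y_i$ under the homeomorphism. The crucial observation is that the $Y_i$-projection of the orbit point $g_nx_i$ equals $(w_n)_{Y_i}y_i$, since $\Z^d$ acts only in the $E_i$-coordinate; hence $\eta_i=\lim_n (w_n)_{Y_i}y_i$, the limit of the $\F_m$-orbit of $w_n$ in $\partial Y_i$. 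Thus the $\partial Y_i$-coordinate of $\zeta$ depends only on the free-group parts $w_n$ and not on the abelian parts $c_n$.

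Finally I would glue the two legs using strong rigidity. Because $p_i$ is an equivalence of limsets, the characterization of Lemma \ref{le:characterization of existence of limset maps} applies in both directions: $(w_n)_{Y_i}y_i\to\eta_i$ in $\partial Y_i$ if and only if $w_n$ converges in $\partial\F_m$, and then $p_i(\eta_i)=\lim_n w_n$. As the right-hand side is a point of $\partial\F_m$ making no reference to $i$, I conclude $p_1(\eta_1)=\lim_n w_n=p_2(\eta_2)$, which is exactly the asserted commutativity. Equivalently, once the two upper-plus-vertical composites are identified with $\eta_1$ and $\eta_2$, the lower triangle commutes by uniqueness of limset maps among the equivalent boundaries $\partial Y_1,\partial Y_2,\partial\F_m$ in $\eLcat(\F_m)$.

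I expect the only delicate point to be the continuity step that passes from the orbit points $g_nx_i\in X_i$ to their projections in $\partial Y_i$: the coordinate projection $Y_i\times E_i\to Y_i$ extends continuously only over $\partial X_i-\partial E_i$, so the argument genuinely needs $\zeta\in\Lambda^0$ in order to keep $\phi_i(\zeta)$ off $\partial E_i$. Everything else is bookkeeping with the slope functions and the homeomorphisms already in hand.
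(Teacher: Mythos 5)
Your proof is correct and takes essentially the same route as the paper's: pick a sequence $(\grouppair{w_n}{c_n})\subset G$ with $\grouppair{w_n}{c_n}^\Delta x_0\to\zeta\in\Lambda^0$, observe that both composite legs of the diagram record precisely the limit of the free-group orbits $(w_ny_i)$ in $\partial Y_i$, and invoke strong rigidity of the negatively curved group $\F_m$ to identify both limits with the single point $\lim_n w_n\in\partial\F_m$. The paper's argument is just a terser version of yours; the continuity caveat you flag (needing $\zeta\in\Lambda^0$ so that $\phi_i(\zeta)$ misses $\partial E_i$, via Lemma \ref{lemma:Sigmahomeo}) is exactly what the paper's definition of $\Lambda^0=\Lambda-\Sigma$ is built to guarantee.
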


\begin{proof}
Suppose we have a sequence \((\grouppair{w_n}{c_n})\subset G\) such that
\(\grouppair{w_n}{c_n}^\Delta x_0\to\zeta_0\in\Lambda^0\).
Following the maps, we see that for both $i$, the sequences \((w_ny_i)\) converge to points
\(\zeta_i\in\partial Y_i\).
By strong rigidity, the sequence \((w_n)\)
converges to a point \(\zeta_0\in\partial \F_m\).  So for either $i$, $\zeta_0\in\Lambda^0$ gets
mapped to $\zeta_i\in\partial Y_i$ which in turn gets mapped to \(\zeta_0\in\partial \F_m\).
\end{proof}

We now abuse notation by considering $\slopevec_i$ also as functions of $\Lambda^0$; that is, we write
\(\slopevec_i=\slopevec_i\phi_i\).
Given \(\eta\in\partial \F_m\), let \(\Lambda_\eta\) denote its preimage in $\Lambda^0$ according to the diagram
in the previous proposition.

\begin{prop}
\label{prop:reparam}
The map \(\Lambda_\eta\to E_1\times E_2\times(0,\infty)\) given by \((\slopevec_1,\slopevec_2,1/\nutoo)\) is an embedding
and for each $i$, we have a commuting diagram
\[
	\begin{array}{ccc}
		\Lambda_\eta        & \incl & E_1\times E_2\times(0,\infty) \\
			\phi_i\downarrow  &       & 	\downarrow \\
		\{\eta\}\times E_i   & \approx & E_i
	\end{array}
\]
where the map down the right hand side is coordinate projection.
\end{prop}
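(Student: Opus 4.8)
The plan is to show that, once $\eta$ is fixed, the triple $(\slopevec_1,\slopevec_2,1/\nutoo)$ records exactly enough data to reconstruct a point of $\Lambda_\eta$, and that the reconstruction is continuous. First I would fix coordinates: represent $\zeta\in\Lambda^0$ by the unit-speed geodesic ray $\alpha=(\beta^1,\gamma^1,\beta^2,\gamma^2)$ in $Y_1\times E_1\times Y_2\times E_2$ emanating from $x_0$, and write $s_i,e_i$ for the speeds of $\beta^i,\gamma^i$. Since $\Lambda^0=\Lambda-\Sigma$ and $\Sigma=\phi_i^{-1}(\partial E_i)$ (Lemma \ref{lemma:Sigmahomeo}), both $s_1>0$ and $s_2>0$, so the directions $\eta_i=\beta^i(\infty)\in\partial Y_i$ are defined. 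Unwinding the definitions of the three functions gives $\slopevec_i(\zeta)=(e_i/s_i)\unitvec_i$, where $\unitvec_i$ is the direction of $\gamma^i$ in $E_i$, while $\nutoo(\zeta)$ equals the slope of the $Y_1\times Y_2$-projection of $\alpha$, which is $s_2/s_1$ by Lemma \ref{le:coordinategeospeeds}. Continuity of the triple is then immediate: each $\slopevec_i$ is continuous by the continuity of $\slopevec$ established above, and $\nutoo$ is continuous and bounded away from $0$ and $\infty$ on $\Lambda^0$ (Lemma \ref{lemma:limDGbdd}), so $1/\nutoo$ is continuous into a compact subinterval of $(0,\infty)$.

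The heart of the argument is injectivity, and the two key inputs are Proposition \ref{prop:bigcommutingdiagram} together with the strong rigidity of $\F_m$. Because $\F_m$ is negatively curved, the maps $\partial Y_i\to\partial\F_m$ are genuine homeomorphisms, so tracing the commuting diagram of Proposition \ref{prop:bigcommutingdiagram} shows that for every $\zeta\in\Lambda_\eta$ the directions $\eta_1,\eta_2$ are constant, namely the images of $\eta$ under the two inverse homeomorphisms; thus along $\Lambda_\eta$ only the $E_i$-data and the relative $Y$-speeds vary. I would then observe that the recorded data determines the full speed profile: $\slopevec_i$ yields both $\unitvec_i$ and the ratio $e_i/s_i$, and $1/\nutoo=s_1/s_2$, so from the three ratios $e_1/s_1$, $e_2/s_2$, $s_1/s_2$ one recovers $(s_1:e_1:s_2:e_2)$ up to scale, hence, after renormalizing to unit speed, the ray $\alpha$ and the point $\zeta$. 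The point to emphasize is that the pair $(\slopevec_1,\slopevec_2)$ alone pins down $e_1/s_1$ and $e_2/s_2$ but loses the relative scale of the two factors, i.e.\ the join parameter $\slopescal$; it is precisely the extra coordinate $1/\nutoo$ that restores $\slopescal$, which is why it appears in the statement.

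Finally I would upgrade the continuous injection to an embedding. The reconstruction above is itself continuous: passing from the fixed $\eta_1,\eta_2$, the directions $\unitvec_1,\unitvec_2$, and the three ratios to the corresponding boundary point of $X_1\times X_2$ is continuous in the cone topology, since convergence of boundary points is equivalent to convergence of the underlying ray data. Hence the triple is a homeomorphism onto its image. The commuting diagram is then immediate from the definitions: for $\zeta\in\Lambda_\eta$, $\phi_i(\zeta)$ corresponds under $\partial X_i-\partial E_i\approx\partial Y_i\times E_i$ to $(\eta_i,\slopevec_i(\zeta))$, whose $E_i$-coordinate is $\slopevec_i(\zeta)$, and this is exactly the coordinate projection of the triple onto $E_i$. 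The main obstacle is the injectivity step, and specifically the bookkeeping showing that $1/\nutoo$ supplies the single missing scalar needed to recover $\zeta$; the attendant continuity of the inverse, needed because $\Lambda_\eta$ may fail to be compact, is the secondary technical point.
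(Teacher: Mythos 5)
Your proof is correct, and it rests on the same pillars as the paper's: Lemma \ref{lemma:Sigmahomeo} to guarantee the $Y_i$-speeds are positive on $\Lambda^0$, Proposition \ref{prop:bigcommutingdiagram} together with strong rigidity of $\F_m$ to freeze the $\partial Y_i$-coordinates at $\eta$, and Lemma \ref{lemma:limDGbdd} to control $\nutoo$. The difference is in how the embedding itself is certified, and it is a real (if modest) difference of route. The paper never reconstructs rays by hand: it notes that $(\slopevec_1,\slopevec_2,\slopescal)$ is \emph{automatically} an embedding of $\Lambda_\eta$, because $\Lambda$ sits inside the join coordinates $\partial X_1\times\partial X_2\times(0,\infty)$ (third coordinate $\slopescal$) and $\phi_i(\Lambda^0)$ sits inside $\partial X_i-\partial E_i\approx\partial Y_i\times E_i$ (second coordinate $\slopevec_i$), so bicontinuity is inherited from these previously established homeomorphisms; it then trades $\slopescal$ for $1/\nutoo$ via the identity $\slopescal=\nutoo\sqrt{(\|\slopevec_2\|^2+1)/(\|\slopevec_1\|^2+1)}$, which exhibits the two triples as differing by a self-homeomorphism of $E_1\times E_2\times(0,\infty)$. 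You instead work with $1/\nutoo$ from the start, recover the speed profile $(s_1:e_1:s_2:e_2)$ and the $E_i$-directions from the triple, and verify continuity of the inverse directly through ray convergence; your ratio bookkeeping is exactly the paper's conversion formula in disguise (normalize to unit speed and compute $\slopescal=\sqrt{s_2^2+e_2^2}\,/\sqrt{s_1^2+e_1^2}$). What your route buys: it is self-contained, it makes explicit that injectivity plus continuity would not suffice since $\Lambda_\eta$ need not be compact, and it identifies precisely what geometric data each coordinate records. What the paper's route buys: brevity, since all the point-set topology is delegated to the join and cone parametrizations. One small point to patch in your write-up: when $\slopevec_i(\zeta)=0$ the unit vector $\unitvec_i$ is undefined, and for a convergent sequence of data the unit vectors need not converge; the reconstruction and its continuity still go through because then $e_i=0$ and the $E_i$-components of the rays collapse to the basepoint, but this degenerate case deserves a sentence.
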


\begin{proof}
Since $\Lambda$ misses $\partial X_1\cup\partial X_2$, it lives in \(\partial X_1\times\partial X_2\times(0,\infty)\).
The last coordinate here is parameterized by $\slopescal$.
Since the $\phi_i$ map
$\Sigma$ homeomorphically onto $\partial E_i$, \(\phi_i(\Lambda^0)\) lives in \(\partial \F_m\times E_i\) where $E_i$ is parameterized
by $\slopevec_i$.  This means that $\Lambda^0$ lives in
\[
	\partial \F_m\times E_1\times\partial \F_m\times E_2\times(0,\infty)
\]
Now by the previous proposition, the projection to each $\partial\F_m$ is just $\{\eta\}$.
This means that the map
\[
	\Lambda_\eta\to E_1\times E_2\times(0,\infty)
\]
given by \((\slopevec_1,\slopevec_2,\slopescal)\) is an embedding.

To change the third coordinate map to $1/\nutoo$, we just need to know that $\slopescal$ can be written
as a continuous function of $\slopevec_1$, $\slopevec_2$, and $\nutoo$.
Assume \((g_n)\subset G\) is a sequence such that \(g_n^\Delta x_0\to\zeta\in\Lambda^0\).
Then \(\Hdisp_i(g_n)>0\) for large $n$ and we can write
\[
	\slopescal^2(g_n)
\; = \;
	\frac
	{
		d_2(x_2,g_nx_2)^2
	}
	{
		d_1(x_1,g_nx_1)^2
	}
\;=\;
	\nutoo(g_n)^2
	\frac
		{\|\slopevec_2(g_n)\|^2+1}
		{\|\slopevec_1(g_n)\|^2+1}
\]
Letting $n\to\infty$, we get that
\[
	\slopescal(\zeta)
		=
	\nutoo(\zeta)
		\sqrt
		{
			\frac
				{\|\slopevec_2(\zeta)\|^2+1}
				{\|\slopevec_1(\zeta)\|^2+1}
		}
\]
\end{proof}

\setcounter{equation}{-1}

\begin{lemma}[Sequence Averaging]
Choose any $\zeta,\zeta'\in\Lambda_\eta$ with the property that $\slopevec_1(\zeta)=\slopevec_1(\zeta')$.
Then there are sequences $(a_n),(b_n),(c_n)\subset G$ such that $a_n^\Delta x_0\to\zeta$, $b_n^\Delta x_0\to\zeta'$
and all of the following hold:

\begin{minipage}[b]{0.45\linewidth}
\centering
\begin{align}
	\limset( X,&\{c_n\})\;
		\subset\Lambda_\eta
			\label{eq:zero} \\
	\Hdisp_1(a_n)+\Hdisp_1(b_n)
		&\sim
	\Hdisp_1(c_n)
			\label{eq:one} \\
	\Hdisp_2(a_n)+\Hdisp_2(b_n)
		&\sim
	\Hdisp_2(c_n)
		\label{eq:two} \\
	\Hdisp_2(a_n)
		&\sim
	\Hdisp_2(b_n)
		\label{eq:three}
\end{align}
\end{minipage}
\hspace{0.5cm}
\begin{minipage}[b]{0.45\linewidth}
\centering
\begin{align}
		\slopevec_1(c_n)
	&\to
		\slopevec_1(\zeta)
			\label{eq:four} \\
		\slopevec_2(c_n)
	&\to
		\frac
		{
			\slopevec_2(\zeta)+\slopevec_2(\zeta')
		}{2}
			\label{eq:five} \\
		\nutoo(c_n)^{-1}
	&\to
		\frac
		{
			\nutoo(\zeta)^{-1}+\nutoo(\zeta')^{-1}
		}{2}
			\label{eq:six}
\end{align}
\end{minipage}
\end{lemma}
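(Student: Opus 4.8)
The plan is to realize $\zeta$ and $\zeta'$ by sequences of straight elements, normalize those sequences by taking powers so that their $Y_2$-displacements match (this is exactly condition (\ref{eq:three})), and then take $c_n$ to be the honest group product $a_nb_n$. The computation behind the lemma becomes transparent once one notices that $\Vtrans_i$ is additive and that $\slopevec_i(g)=\Vtrans_i(g)/\Hdisp_i(g)$: if the horizontal displacements add (conditions (\ref{eq:one}) and (\ref{eq:two})), then $\slopevec_i(c_n)$ is the $\Hdisp_i$-weighted average of $\slopevec_i(a_n)$ and $\slopevec_i(b_n)$. Condition (\ref{eq:four}) then holds for free, because $\slopevec_1(\zeta)=\slopevec_1(\zeta')$ makes the weighting irrelevant in the first factor, while the matching of $Y_2$-displacements from (\ref{eq:three}) turns the weighted average in the second factor into the genuine midpoint of (\ref{eq:five}); the identical bookkeeping applied to $\nutoo^{-1}=\Hdisp_1/\Hdisp_2$ produces (\ref{eq:six}). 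So the entire content of the lemma reduces to constructing $c_n$ whose horizontal displacements add and whose $\F_m$-part still runs out to $\eta$.

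I would begin by choosing straight elements $\hat a_n,\hat b_n\in G$ with $\hat a_n^\Delta x_0\to\zeta$ and $\hat b_n^\Delta x_0\to\zeta'$; this is possible because straight elements are $1$-dense in $\F_m$ and a bounded modification changes neither the limit point nor the limiting values of $\slopevec_i$ and $\nutoo$. Using Lemma \ref{lemma:reasonforwebbings} together with its evident $Y_i$-analogue $\Hdisp_i(\hat a_n^{\,k})\sim k\,\Hdisp_i(\hat a_n)$ (same axis-through-a-compact-set argument), I would set $a_n=\hat a_n^{\,p_n}$ and $b_n=\hat b_n^{\,q_n}$. Taking powers preserves the limit points by Lemma \ref{lemma:reasonforwebbings} and, after a routine diagonalization, preserves the limiting values of $\slopevec_i$ and $\nutoo$, since $\Vtrans_i$ scales exactly linearly while $\Hdisp_i$ scales linearly to leading order. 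I would then choose $p_n,q_n\to\infty$ subject to $p_n\,\Hdisp_2(\hat a_n)\sim q_n\,\Hdisp_2(\hat b_n)$, which secures (\ref{eq:three}); scaling both exponents by a common factor tending to infinity keeps this ratio intact while forcing the word lengths of the $\F_m$-parts to infinity, which is what makes cancellation negligible in the next step.

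With $c_n=a_nb_n$, additivity of $\Vtrans_i$ gives $\Vtrans_i(c_n)=\Vtrans_i(a_n)+\Vtrans_i(b_n)$ at no cost, and the heart of the argument is (\ref{eq:one}) and (\ref{eq:two}). Writing the $\F_m$-parts of $a_n,b_n$ as reduced straight words $A_n,B_n$, let $k_n$ be the amount of cancellation in $A_nB_n$; since the tail of $A_n$ and the head of $B_n$ are fixed once $\hat a_n,\hat b_n$ are chosen, $k_n$ is at most one period of each, hence $O(l(\hat a_n)+l(\hat b_n))$ \emph{independently of the exponents} $p_n,q_n$, so driving those exponents to infinity makes $k_n$ negligible next to $l(A_n)+l(B_n)$. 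In $\Gamma$ the vertex $A_n$ sits at distance $k_n$ from the geodesic $[1,A_nB_n]$; pushing forward by the quasi-isometry $f:\Gamma\to Y_i$ and applying the Stability of Quasi-Geodesics Theorem to $f([1,A_nB_n])$ places $(A_n)_{Y_i}y_i$ within distance $O(k_n)$ of the genuine geodesic $[y_i,(A_nB_n)_{Y_i}y_i]$. Since $(A_n)_{Y_i}^{-1}(A_nB_n)_{Y_i}=(B_n)_{Y_i}$ is an isometry, this yields $\Hdisp_i(c_n)=\Hdisp_i(a_n)+\Hdisp_i(b_n)-O(k_n)\sim\Hdisp_i(a_n)+\Hdisp_i(b_n)$, which is (\ref{eq:one}) and (\ref{eq:two}); the averaging identities of the first paragraph then deliver (\ref{eq:four})--(\ref{eq:six}). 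Finally, the surviving prefix of $A_nB_n$ is the prefix of $A_n$, which runs out to $\eta$, so the $\F_m$-part of $c_n$ converges to $\eta$; as those same identities show $\slopevec_i(c_n)$ converges and hence stays bounded, no subsequential limit of $c_n^\Delta x_0$ can lie in $\Sigma$, and every such limit lands in $\Lambda^0$ over $\eta$, i.e.\ in $\Lambda_\eta$ by Proposition \ref{prop:reparam}, which is (\ref{eq:zero}).

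The hard part is precisely this displacement-additivity step: one must control cancellation in the free group and then import the resulting ``no backtracking'' statement into the genuine CAT(0) factors $Y_i$, where geodesics are only coarsely modelled by tree geodesics. The delicacy is that the cancellation length $k_n$ need not be bounded, only $o(l(A_n)+l(B_n))$, so the Stability estimate must be applied to the honest tree geodesic $[1,A_nB_n]$ (a quasi-geodesic in $Y_i$ with fixed constants) rather than to the backtracking concatenation, after which $(A_n)_{Y_i}y_i$ is controlled through its tree-distance $k_n$ from that geodesic. Everything downstream is the elementary weighted-average computation sketched in the first paragraph, symmetrized by (\ref{eq:three}); the only remaining bookkeeping is the two diagonalizations needed to make all asymptotic relations hold simultaneously as $n\to\infty$, and the check that no limit point slips into $\Sigma$, both handled by the boundedness of $\slopevec_i$ coming from Lemma \ref{lemma:limDGbdd}.
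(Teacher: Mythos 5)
Your construction is architecturally the same as the paper's own proof: realize $\zeta,\zeta'$ by straight elements, take powers matched so that the $\Hdisp_2$'s agree (condition (\ref{eq:three})), set $c_n=a_nb_n$, use Stability of Quasi-Geodesics to transport a no-backtracking statement from $\Gamma$ into the $Y_i$, and run the weighted-average identity coming from additivity of $\Vtrans_i$ to get (\ref{eq:four})--(\ref{eq:six}). But the step you yourself single out as the hard part contains a genuine gap. You assert that the cancellation $k_n$ in $A_nB_n=\hat v_n^{p_n}\hat w_n^{q_n}$ is ``at most one period of each, hence $O(l(\hat a_n)+l(\hat b_n))$ independently of the exponents,'' justified only by the remark that the tail of $A_n$ and the head of $B_n$ are fixed. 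As a statement about powers of straight words this is false: take $v=ab$ and $w=b^{-1}a^{-1}$; both are straight, yet $v^{p}w^{q}=(ab)^{p-q}$, so the cancellation is $2\min(p,q)$ --- it grows with the exponents --- and conditions (\ref{eq:one}) and (\ref{eq:two}) fail outright, since $\Hdisp_i(c_n)$ is on the order of $|p-q|$ while $\Hdisp_i(a_n)+\Hdisp_i(b_n)$ is on the order of $p+q$. The correct general statement is a Fine--Wilf periodicity argument: cancellation of length at least $l(v)+l(w)$ forces $v^{-1}$ and $w$ to be powers of a common element, so your bound holds exactly when this commensurable case is excluded. Your write-up never excludes it, and nothing in your justification could, because you never use the one hypothesis that rules it out --- that both $\F_m$-parts converge to the \emph{same} point $\eta$.

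That hypothesis is precisely where the paper's proof of this step lives, and invoking it gives much more than a bound on $k_n$. Since $\hat v_n\to\eta$ and $\hat w_n\to\eta$, for large $n$ their reduced spellings begin with the same letter $\epsilon$, and straightness then says neither ends in $\epsilon^{-1}$. In particular $\hat v_n^{-1}$ and $\hat w_n$ cannot be powers of a common element, but in fact the concatenation $\hat v_n^{p_n}\hat w_n^{q_n}$ is already a reduced word: $k_n=0$. The tree geodesic $[1,A_nB_n]$ then passes through $A_n$ itself (indeed through $\hat v_n$, which is what the paper uses for (\ref{eq:zero})), and your entire $O(k_n)$ bookkeeping --- the periodicity bound, and driving $p_n,q_n\to\infty$ to make $k_n$ negligible --- evaporates. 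So the repair is not a patch on your estimate but its replacement by this observation; once that is done, everything downstream in your proposal --- (\ref{eq:one}) and (\ref{eq:two}) via Stability of Quasi-Geodesics and convexity, (\ref{eq:four})--(\ref{eq:six}) via the weighted averages, (\ref{eq:zero}) via the surviving prefix, and (\ref{eq:three}) via the matched exponents --- is correct and coincides with the paper's argument.
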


\begin{proof}
Since the collection of straight elements is quasi-dense in $G$ and $G^\Delta\to X$ is a QIE, we can get
sequences of straight elements \(g_n=\grouppair{v_n}{\rho_n}\) and \(h_n=\grouppair{w_n}{\sigma_n}\) such that
$g_n^\Delta x_0\to\zeta$ and $h_n^\Delta x_0\to\zeta'$.  Since $\zeta,\zeta'\notin\Sigma$,
$\Hdisp_2(w_n)$ and $\Hdisp_2(v_n)$ both go to infinity.
Define $s_n$ and $t_n$ to be the floors of
$\Hdisp_2(w_n)$ and $\Hdisp_2(v_n)$.

We will take \(a_n=g_n^{s_n}\), \(b_n=h_n^{t_n}\), and
$c_n=a_nb_n=\grouppair{v_n^{s_n}w_n^{t_n}}{\rho_n^{s_n}\sigma_n^{t_n}}$.
The first two converge to
$\zeta$ and $\zeta'$ by Lemma \ref{lemma:reasonforwebbings}.
Now, since $v_n$ and $w_n$ both converge to $\eta\in\partial\F_m$,
reduced spellings for $v_n$ and $w_n$ share the same first letter $\epsilon$ when $n$ is large.
Since they are straight, they do not end in $\epsilon^{-1}$.  Using the fact that $\F_m$ is free,
a geodesic edge path $\gamma_n$ from $1$ to $v_n^{s_n}w_n^{t_n}$
in $\Gamma$ passes through $v_n$.  Since $v_n\to\eta$, so does $v_n^{s_n}w_n^{t_n}$.
This gives (\ref{eq:zero}).

By the same reasoning, $\gamma_n$ also passes through the vertex $v_n^{s_n}$.
Since the $Y_i$ are $\delta$-hyperbolic, it follows from the Stability of Quasi-Geodesics
Theorem that there is an $R$ such that for both $i$, $v_n^{s_n}y_i$ is inside the $R$-neighborhood of
$[y_i,v_n^{s_n}w_n^{t_n}y_i]$.  By convexity of the CAT(0) metric, we get
(\ref{eq:one}) and (\ref{eq:two}).

For (\ref{eq:three}), we apply Lemma \ref{lemma:reasonforwebbings}:
\[
	\frac
		{\Hdisp_2(v_n^{s_n})}
		{\Hdisp_2(w_n^{t_n})}
\sim
	\frac
		{s_n\Hdisp_2(v_n)}
		{t_n\Hdisp_2(w_n)}
\to
	1.
\]
For (\ref{eq:four}) and (\ref{eq:five}) we will use the following equation which is easy to check:
\[
(\dagger)\qquad
	\slopevec_i(c_n)
=
	\slopevec_i(a_n)\frac{\Hdisp_i(a_n)}{\Hdisp_i(c_n)}
		+
	\slopevec_i(b_n)\frac{\Hdisp_i(b_n)}{\Hdisp_i(c_n)}.
\]
If $i=1$, we use $(\dagger)$ to compute
\begin{align*}
	\bigl|
		\slopevec_1(c_n) - \slopevec_1(b_n)
	\bigr|
&=
	\left|
		\slopevec_1(a_n)
			\smallfrac{\Hdisp_1(a_n)}{\Hdisp_1(c_n)}
				+
		\slopevec_1(b_n)
		\left(
			\smallfrac{\Hdisp_1(b_n)}{\Hdisp_1(c_n)}
				-
			1
		\right)
	\right| \\
&\le
	\left|
		\slopevec_1(a_n)
			\smallfrac{\Hdisp_1(a_n)}{\Hdisp_1(c_n)}
				-
		\slopevec_1(b_n)
			\smallfrac{\Hdisp_1(a_n)}{\Hdisp_1(c_n)}
	\right|
	+
	\left|
		\slopevec_1(b_n)
			\smallfrac{\Hdisp_1(a_n)}{\Hdisp_1(c_n)}
			+
		\slopevec_1(b_n)
		\left(
			\smallfrac{\Hdisp_1(b_n)}{\Hdisp_1(c_n)}
				-
			1
		\right)
	\right| \\
&=
	\smallfrac{\Hdisp_1(a_n)}{\Hdisp_1(c_n)}
	\bigl|
		\slopevec_1(a_n)-\slopevec_1(b_n)
	\bigr|
		+
	\slopevec_1(b_n)
	\left|
		\smallfrac{\Hdisp_1(a_n)+\Hdisp_1(b_n)}{\Hdisp_1(c_n)}
			-
		1
	\right|.
\end{align*}
Using (1) and the facts that $0\le\Hdisp_1(a_n)\le\Hdisp_1(c_n)$
and \(\slopevec_1(a_n)\) and \(\slopevec_1(b_n)\) both converge to $\slopevec_1(\zeta)<\infty$, this all goes to zero as $n\to\infty$
giving (\ref{eq:four}).

Using (\ref{eq:two}) and (\ref{eq:three}) we get
\[
	\frac{\Hdisp_2(a_n)}{\Hdisp_2(c_n)}
		\sim
	\frac{\Hdisp_2(b_n)}{\Hdisp_2(c_n)}
		\to
	\frac12.
\]
Apply $(\dagger)$ and let $n\to\infty$ to get (\ref{eq:five}).

Finally, we use (\ref{eq:one}), (\ref{eq:two}), and (\ref{eq:three}) to compute (\ref{eq:six}):
\begin{align*}
	\nutoo(c_n)^{-1}
&\sim
	\frac
	{
		\frac
			{\Hdisp_2(a_n)}
			{\Hdisp_2(b_n)}
			\nutoo(a_n)^{-1}
				+
			\nutoo(b_n)^{-1}
	}
	{
		\frac
			{\Hdisp_2(a_n)}
			{\Hdisp_2(b_n)}
				+
			1
	} \\
&\to
	\frac{\nutoo(\zeta)^{-1}+\nutoo(\zeta')^{-1}}{2}.
\end{align*}
\end{proof}

\begin{proof}[Proof of Theorem \ref{Thm:CE}]
By Lemma \ref{lemma:Sigmahomeo} we know that preimages of points of $\partial E_1$ are just points.
It remains to consider preimages of points of $\partial X_1-\partial E_1$.
Choose $\eta\in\partial \F_m$.
Proposition \ref{prop:reparam} provided an
embedding \(\Lambda_\eta\incl E_1\times E_2\times(0,\infty)\subset\E^{2d+1}\) by the map \((\slopevec_1,\slopevec_2,\nutoo^{-1})\).
Choose any \(\zeta_1\in\phi_1(\Lambda_\eta)\)
and \(\zeta,\zeta'\in\phi_1^{-1}(\zeta_1)\).  This means that \(\slopevec_1(\zeta)=\slopevec_1(\zeta')=\slopevec_1(\zeta_1)\).

Get sequences $(a_n),(b_n),(c_n)\subset G$ such that $a_n^\Delta x_0\to\zeta$ and $b_n^\Delta x_0\to\zeta'$,
as prescribed by the previous lemma.
From (\ref{eq:zero}), (\ref{eq:four}), (\ref{eq:five}), and (\ref{eq:six}) it follows
that $(c_n)$ converges to a point $\zeta''\in\Lambda_\eta$.   (\ref{eq:four}) tells us that
$\zeta''\in\phi_1^{-1}(\zeta_1)$ and (\ref{eq:five}) and (\ref{eq:six}) tell us that $\zeta''$
is actually the midpoint of the line segment $[\zeta,\zeta']$.
By applying this averaging
process repeatedly, we see that a dense subset of $[\zeta,\zeta']$ is contained in $\phi_1^{-1}(\zeta_1)$.
But of course $\phi_1^{-1}(\zeta_1)$ is closed, which means that $[\zeta,\zeta']\subset\phi_1^{-1}(\zeta_1)$.
This proves that $\phi_1^{-1}(\zeta_1)$ is a convex subspace of $\E^{2d+1}$.
Convex subspaces of euclidean spaces are disks.
The proof for $\phi_2$ is the same.
\end{proof}

\begin{proof}[Proof of Theorem \ref{Thm:NotComparable}]
We need to prove that the only limset maps between boundaries of $\F_2\times\Z$ are homeomorphisms.
Assume $G=\F_2\times\Z$ acts geometrically on two CAT(0) spaces $X_1$ and $X_2$ with specified basepoints
$x_1$ and $x_2$
and that we have a limset map \(\rho:\partial X_1\to\partial X_2\) which is not a homeomorphism.
If $\rho$ were injective, then by Lemma \ref{le:characterization of existence of limset maps},
$\rho$ would be a homeomorphism, giving us a contradiction.

So $\rho$ must not be injective.
It follows from Lemma \ref{lemma:Sigmahomeo} that $\rho$ takes the poles of $\partial X_1$
to the poles of $\partial X_2$, and from Proposition \ref{prop:bigcommutingdiagram} that
$\rho$ takes longitudes to longitudes.  So there are distinct $\zeta_1,\zeta_1'\in\partial X_1$ lying
in the same longitude such that $\rho(\zeta_1)=\rho(\zeta_1')$; call this point $\zeta_2$.
Let $c$ be the generator of $\Z$ and assume our coordinate system has been taken so that
\(\Vtrans_1(c)\) and \(\Vtrans_2(c)\) are both positive.
Choose sequences \((g_n),(g_n')\subset G\) such that $(g_nx_1)$ and $(g_n'x_1)$ converge to $\zeta_1$ and $\zeta_1'$,
say \(g_n=\grouppair{w_n}{c^{k_n}}\) and \(g_n'=\grouppair{w_n'}{c^{k_n'}}\).

Now the slopes and vertical displacements here are 1-dimensional vectors and, on these sequences,
real-valued.  To emphasize this, we will drop the vector notation and call them simply
${\sf m}_1$, ${\sf m}_2$, ${\sf t}_1$, and ${\sf t}_2$.
If ${\sf m}_1(g_n')\sim {\sf m}_1(w_n)$, then set $g_n''=\grouppair{w_n}{1}$.
Otherwise choose positive integers $i_n$ such that
\[
	i_n
		\sim
	\bigl[
		{\sf m}_1(g_n')
			-
		{\sf m}_1(w_n)
	\bigr]
	\frac
		{ \Hdisp_1(w_n) }
		{ {\sf t}_1(c) }.
\]
and take $g_n''=\grouppair{w_n}{c^{i_n}}$.  Either way, ${\sf m}_1(g_n'')\sim{\sf m}_1(g_n')$,
which means that $g_n''x_1\to\zeta_1'$.  By hypothesis, $\partial X_2<\partial X_1$, so
$g_n''x_2\to\zeta_2$, and $\lim_{n\to\infty}{\sf m}_2(g_n'')=\lim_{n\to\infty}{\sf m}_2(g_n)$.
So
\[
	{\sf m}_1(g_n'')-{\sf m}_1(g_n)
		\sim
	\bigl[
		{\sf m}_2(g_n'')-{\sf m}_2(g_n)
	\bigr]
		\frac
			{ {\sf t}_1(c) }
			{ {\sf t}_2(c) }
		\frac
			{ \Hdisp_2(w_n) }
			{ \Hdisp_1(w_n) }
				\to 0
\]
because Lemma \ref{lemma:limDGbdd} guarantees the ratio of the
$\Hdisp_i$'s remains bounded.  But this of course means that ${\sf m}_1(g_n)\to\zeta_1'$
contradicting the fact that $\zeta_1'\neq\zeta_1$.
\end{proof}

\bibliography{bean}{}

\begin{thebibliography}{10}

\bibitem{bestvina-local_homology}
{\sc M.~Bestvina}, {\em Local homology properties of boundaries of groups},
  Michigan Math. J., 43 (1996), pp.~123--139.

\bibitem{bowers-ruane}
{\sc P.~L. Bowers and K.~Ruane}, {\em Boundaries of nonpositively curved groups
  of the form {$G\times{\Z}\sp n$}}, Glasgow Math. J., 38 (1996), pp.~177--189.

\bibitem{bridson-haefliger}
{\sc M.~R. Bridson and A.~Haefliger}, {\em Metric spaces of non-positive
  curvature}, vol.~319 of Grundlehren der Mathematischen Wissenschaften
  [Fundamental Principles of Mathematical Sciences], Springer-Verlag, Berlin,
  1999.

\bibitem{chapman-hilbert}
{\sc T.~A. Chapman}, {\em Lectures on {H}ilbert cube manifolds}, American
  Mathematical Society, Providence, R. I., 1976.
\newblock Expository lectures from the CBMS Regional Conference held at
  Guilford College, October 11-15, 1975, Regional Conference Series in
  Mathematics, No. 28.

\bibitem{croke-kleiner-nonrigid}
{\sc C.~B. Croke and B.~Kleiner}, {\em Spaces with nonpositive curvature and
  their ideal boundaries}, Topology, 39 (2000), pp.~549--556.

\bibitem{croke-kleiner-graph_manifolds}
\leavevmode\vrule height 2pt depth -1.6pt width 23pt, {\em The geodesic flow of
  a nonpositively curved graph manifold}, Geom. Funct. Anal., 12 (2002),
  pp.~479--545.

\bibitem{ferry-CE_not_SH}
{\sc S.~Ferry}, {\em Shape equivalence does not imply {CE} equivalence}, Proc.
  Amer. Math. Soc., 80 (1980), pp.~154--156.

\bibitem{geoghegan}
{\sc R.~Geoghegan}, {\em The shape of a group---connections between shape
  theory and the homology of groups}, in Geometric and algebraic topology,
  vol.~18 of Banach Center Publ., PWN, Warsaw, 1986, pp.~271--280.

\bibitem{geoghegan-ontaneda}
{\sc R.~Geoghegan and P.~Ontaneda}, {\em Boundaries of cocompact proper {${\rm
  CAT}(0)$} spaces}, Topology, 46 (2007), pp.~129--137.

\bibitem{hosaka-strong_rigidity}
{\sc T.~Hosaka}, {\em On equivariant homeomorphisms of boundaries of cat(0)
  groups}.
\newblock Preprint, 2010.

\bibitem{hruska-kleiner}
{\sc G.~C. Hruska and B.~Kleiner}, {\em Hadamard spaces with isolated flats},
  Geom. Topol., 9 (2005), pp.~1501--1538 (electronic).
\newblock With an appendix by the authors and Mohamad Hindawi.

\bibitem{kleiner-leeb}
{\sc B.~Kleiner and B.~Leeb}, {\em Rigidity of quasi-isometries for symmetric
  spaces and euclidean buildings}, Inst. Hautes Études Sci. Publ. Math.,
  (1997), pp.~115--197.

\bibitem{link}
{\sc G.~Link}, {\em Asymptotic geometry in products of {H}adamard spaces with
  rank one isometries}, Geom. Topol., 14 (2010), pp.~1063--1094.

\bibitem{milnor-note}
{\sc J.~Milnor}, {\em A note on curvature and fundamental group}, J.
  Differential Geometry, 2 (1968), pp.~1--7.

\bibitem{mooney-knot_groups}
{\sc C.~Mooney}, {\em Examples of non-rigid {CAT}(0) groups from the category
  of knot groups}, Algebr. Geom. Topol., 8 (2008), pp.~1666--1689.

\bibitem{mooney-CE}
\leavevmode\vrule height 2pt depth -1.6pt width 23pt, {\em All {${\rm CAT}(0)$}
  boundaries of a group of the form {$H\times K$} are {CE} equivalent}, Fund.
  Math., 203 (2009), pp.~97--106.

\bibitem{mooney-generalizing_CK}
\leavevmode\vrule height 2pt depth -1.6pt width 23pt, {\em Generalizing the
  {C}roke-{K}leiner construction}, Topology Appl., 157 (2010), pp.~1168--1181.

\bibitem{ontaneda}
{\sc P.~Ontaneda}, {\em Cocompact {CAT}(0) spaces are almost geodesically
  complete}, Topology, 44 (2005), pp.~47--62.

\bibitem{papasoglu-swenson}
{\sc P.~Papasoglu and E.~Swenson}, {\em Boundaries and {JSJ} decompositions of
  {CAT}(0)-groups}, Geom. Funct. Anal., 19 (2009), pp.~559--590.

\bibitem{ruane-anglequestion}
{\sc K.~E. Ruane}, {\em The angle question}, Topology Appl., 110 (2001),
  pp.~99--111.
\newblock Geometric topology and geometric group theory (Milwaukee, WI, 1997).

\bibitem{sher}
{\sc R.~B. Sher}, {\em Realizing cell-like maps in {E}uclidean space}, General
  Topology and Appl., 2 (1972), pp.~75--89.

\bibitem{staley}
{\sc D.~Staley}, {\em Erratic boundary images of {CAT}(0) geodesics under
  {$G$}-equivariant maps}.
\newblock Preprint, arXiv:0911.2442, 2009.

\bibitem{svarc}
{\sc A.~S. {\v{S}}varc}, {\em A volume invariant of coverings}, Dokl. Akad.
  Nauk SSSR (N.S.), 105 (1955), pp.~32--34.

\bibitem{swenson}
{\sc E.~L. Swenson}, {\em A cut point theorem for {${\rm CAT}(0)$} groups}, J.
  Differential Geom., 53 (1999), pp.~327--358.

\bibitem{wilson}
{\sc J.~M. Wilson}, {\em A {CAT}(0) group with uncountably many distinct
  boundaries}, J. Group Theory, 8 (2005), pp.~229--238.

\end{thebibliography}
\bibliographystyle{siam}

\end{document}